\def\disp{\displaystyle}
\def\dref#1{(\ref{#1})}
\theoremstyle{plain}
\newtheorem{theorem}{Theorem}[section]
\newtheorem{lemma}{Lemma}[section]
\theoremstyle{definition}
\newtheorem{definition}{Definition}[section]
\newtheorem{remark}{Remark}[section]
\numberwithin{equation}{section}
\begin{document}

\title{\bf Blow-up prevention by nonlinear diffusion in a 2D Keller-Segel-Navier-Stokes system with rotational flux}

\author{
Yuanyuan Ke$^{a}$, Jiashan Zheng$^{b,a}$,\thanks{Corresponding author. E-mail address: zhengjiashan2008@163.com (J. Zheng)}
\\
$^{a}$ School of Information,
\\
Renmin University of China, Beijing, 100872, P.R.China
\\
$^{b}$ School of Mathematics and Statistics Science,
\\
Ludong University, Yantai 264025, P.R.China
}
\date{}

\maketitle \vspace{0.3cm}
\noindent
\begin{abstract}
This paper investigates the following  Keller-Segel-Navier-Stokes system with  nonlinear diffusion
and  rotational flux
$$
\left\{\begin{array}{lll}
&n_t+u\cdot\nabla n=\Delta n^m-\nabla\cdot(nS(x, n, c)\nabla c),\quad &x\in \Omega, t>0,
\\
&c_t+u\cdot\nabla c=\Delta c-c+n,\quad &x\in \Omega, t>0,
\\
&u_t+\kappa (u\cdot\nabla)u+\nabla P=\Delta u+n\nabla \phi,\quad &x\in \Omega, t>0,
\\
&\nabla\cdot u=0,\quad &x\in \Omega, t>0,
\end{array}\right.\eqno(KSNF)
$$
where  $\kappa\in \mathbb{R},\phi\in W^{2,\infty}(\Omega)$ and $S$
is a given function with values in  $\mathbb{R}^{2\times2}$
which fulfills
$$
|S(x,n,c)| \leq C_S
$$
with some $C _S > 0$.
Systems of this type  describe chemotaxis-fluid interaction in cases when the evolution of the chemoattractant is essentially
dominated by production through cells.
If $m>1$ and $\Omega\subset \mathbb{R}^2$ is a {\bf bounded} domain with
smooth boundary, then for all reasonably regular initial data, a corresponding initial-boundary value
problem for $(KSNF)$ possesses  a global and bounded (weak) solution,
which significantly improves
previous results of several authors. Moreover, the {\bf optimal condition} on the parameter $m$ for  global existence is obtained.
Our approach underlying the derivation of main result
is based on an entropy-like estimate involving the functional
$$\int_{\Omega}(n_{\varepsilon} +\varepsilon)^{m}+\int_{\Omega}|\nabla c_\varepsilon|^{2},$$
where $n_\varepsilon$ and $c_\varepsilon$ are components of the solutions to \dref{1.1fghyuisda} below.
\end{abstract}

\vspace{0.3cm}
\noindent {\bf\em Key words:}~Navier-Stokes system; Keller-Segel model; Global existence; Nonlinear diffusion

\noindent {\bf\em 2010 Mathematics Subject Classification}:~ 35K55, 35Q92, 35Q35, 92C17

\newpage
\section{Introduction}

Chemotaxis, the biased movement of cells in response to chemical gradients,
plays an important role in coordinating cell migration in many biological phenomena (see Hillen and Painter \cite{Hillen}).
For example, the fruit fly Drosophila melanogaster navigates up gradients of attractive odours during food location,
and male moths follow pheromone gradients released by the female during mate location.
In 1970 Keller and Segel \cite{Keller2710} proposed a mathematical model describing chemotactic aggregation of cellular slime molds.
But in their model, they did not take into account the relationship between cells and their environment.
So the model can be used to describe that bacterial chemotaxis was viewed as locomotion in an otherwise quiescent fluid.
Yet suspensions of aerobic bacteria often develop flows from the interplay of chemotaxis and buoyancy.
Tuval and his cooperator \cite{Tuval1215} described the above biological phenomena and
proposed the mathematical model consisting of oxygen diffusion and consumption,
chemotaxis, and fluid dynamics
$$
\left\{\begin{array}{lll}
&n_t+u\cdot\nabla n=\Delta n-\nabla\cdot(n\chi(c)\nabla c),\quad &x\in \Omega, t>0,
\\
&c_t+u\cdot\nabla c=\Delta c-nf(c),\quad &x\in \Omega, t>0,
\\
&u_t+\kappa (u\cdot\nabla)u+\nabla P=\Delta u+n\nabla \phi,\quad &x\in \Omega, t>0,
\\
&\nabla\cdot u=0,\quad &x\in \Omega, t>0
\end{array}\right.
$$
in a domain $\Omega\subset \mathbb{R}^N(N\geq1)$,
where $n$, $c$, $u$, and $P$ denote, respectively, the density of cells, chemical concentration, velocity field and pressure of the fluid.
The coefficient $\kappa$ is related to the strength of nonlinear fluid convection,
$\phi$ stands for the potential of the gravitational field within which the cells are driven through buoyant forces,
the function $\chi(c)$ measures the chemotactic sensitivity,
and $f(c)$ represents the oxygen consumption rate.
Some modeling approaches suggested that an adequate description of bacterial motion near surfaces of their surrounding fluid should
involve rotational components in the cross-diffusive flux (see \cite{Xuess1215, Xue1215}),
so the natural generalizations of chemotaxis-fluid systems should model the evolution of the cell density, as the following form
$$
\left\{\begin{array}{lll}
&n_t+u\cdot\nabla n=\Delta n-\nabla\cdot(nS(x, n, c)\nabla c),\quad &x\in \Omega, t>0,
\\
&c_t+u\cdot\nabla c=\Delta c-nf(c),\quad &x\in \Omega, t>0,
\\
&u_t+\kappa (u\cdot\nabla)u+\nabla P=\Delta u+n\nabla \phi,\quad &x\in \Omega, t>0,
\\
&\nabla\cdot u=0,\quad &x\in \Omega, t>0
\end{array}\right.
$$
where $S$ stands for the chemotactic sensitivity. Moreover,
since the diffusion of bacteria (or, more generally, of cells) in a viscous fluid is more like movement in a porous medium,
the authors in \cite{Francesco12186} extended the above model to one with a porous medium-type diffusion
$$
\left\{\begin{array}{lll}
&n_t+u\cdot\nabla n=\Delta n^m-\nabla\cdot(nS(x, n, c)\nabla c),\quad &x\in \Omega, t>0,
\\
&c_t+u\cdot\nabla c=\Delta c-nf(c),\quad &x\in \Omega, t>0,
\\
&u_t+\kappa (u\cdot\nabla)u+\nabla P=\Delta u+n\nabla \phi,\quad &x\in \Omega, t>0,
\\
&\nabla\cdot u=0,\quad &x\in \Omega, t>0,
\end{array}\right.
$$
where $m>1$.
Concerning the framework where the chemical is produced by the cells instead of consumed,
then the corresponding chemotaxis-fluid model is then the quasilinear Keller-Segel-Navier-Stokes system of the form (see \cite{Bellomo1216,Hillen})
\begin{equation}\label{0.4}
\left\{\begin{array}{lll}
&n_t+u\cdot\nabla n=\Delta n^m-\nabla\cdot(nS(x, n, c)\nabla c),\quad &x\in \Omega, t>0,
\\
&c_t+u\cdot\nabla c=\Delta c-c+n,\quad &x\in \Omega, t>0,
\\
&u_t+\kappa (u\cdot\nabla)u+\nabla P=\Delta u+n\nabla \phi,\quad &x\in \Omega, t>0,
\\
&\nabla\cdot u=0,\quad &x\in \Omega, t>0.
\end{array}\right.
\end{equation}

Due to the presence of the tensor-valued sensitivity as well as the strongly nonlinear term $(u\cdot\nabla)u$ and lower regularity for $n$,
the mathematical analysis of (\ref{0.4}) regarding global and bounded solutions is far from trivial.
Some simplified cases of the system (\ref{0.4}) have been studied.
When $\kappa=0$, which is corresponding to the chemotaxis-Stokes system, the results focused on the global existence and boundedness of the solutions,
for example, Wang and Xiang (\cite{Wang21215}) dealt with the case $m=1$ in $2$-dimensional space;
while for $m\not=1$, Li, Wang and Xiang (\cite{Liggghh793}),
Peng and Xiang (\cite{Peng55667}) considered the problem with the spatial dimension $N=2$ and $N=3$, respectively.
When $\kappa\not=0$, $m=1$ and $|S(x,n,c)|\le C_S(1+n)^{-\alpha}$ for some $C_S\ge0$ and $\alpha>0$,
Wang, Winkler and Xiang (\cite{Wang23421215}) and Ke and Zheng (\cite{Zhengssdddd00}) considered the global existence of the solution for the case $N=2$ and $N=3$, respectively.
But till now, as far as we know, it is still not clearly that in the case that $\kappa\not=0$ and $\alpha=0$, whether the solution of the system (\ref{0.4}) is bounded or not.
At the same time, we also noticed that when dealing with the problem of $\kappa=0$ and $\alpha=0$, or $\kappa\not=0$ and $\alpha>0$,
Li, Wang and Xiang (\cite{Liggghh793}) and Wang, Winkler and Xiang (\cite{Wang23421215}) both added the assumption that the domain is {\bf convex}.
Whether the convexity of the domain is necessary also arouses our interest.
By considering the key energy functional
$$
\int_{\Omega}n^{m} +\int_{\Omega}|\nabla c|^{2},
$$
we can obtain the global existence and boundedness of the solution for the system (\ref{0.4}),
which corresponding to the case that $\kappa\not=0$ and $\alpha=0$, in a more general {\bf non-convex} domain.

In this paper, we shall subsequently consider the chemotaxis-Navier-Stokes system (\ref{0.4}) along with the initial data
\begin{equation}\label{0.6}
\disp{n(x,0)=n_0(x),\quad c(x,0)=c_0(x),\quad u(x,0)=u_0(x),}\qquad x\in \Omega,
\end{equation}
and under the boundary conditions
\begin{equation}\label{0.5}
\disp{\left(nS(x, n, c)\nabla c\right)\cdot\nu=\nabla c\cdot\nu=0,\quad u=0,}\qquad x\in \partial\Omega, t>0,
\end{equation}
in a bounded domain $\Omega\subset \mathbb{R}^2$ with smooth boundary,
where we assume that the chemotactic sensitivity tensor $S(x, n, c)$ be satisfied
\begin{equation}\label{x1.73142vghf48rtgyhu}
S\in C^2(\bar{\Omega}\times[0,\infty)^2;\mathbb{R}^{2\times2})
\end{equation}
and
\begin{equation}\label{x1.73142vghf48gg}
|S(x, n, c)|\leq C_S ~~~~\mbox{for all}~~ (x, n, c)\in\Omega\times [0,\infty)^2
\end{equation}
with some $C_S > 0$.
Throughout this paper,
we assume that
\begin{equation}
\phi\in W^{2,\infty}(\Omega)
\label{dd1.1fghyuisdakkkllljjjkk}
\end{equation}
and the initial data $(n_0, c_0, u_0)$ fulfills
\begin{equation}\label{ccvvx1.731426677gg}
\left\{
\begin{array}{ll}
\displaystyle{n_0\in C^\kappa(\bar{\Omega})~~\mbox{for certain}~~ \kappa > 0~~ \mbox{with}~~ n_0\geq0 ~~\mbox{in}~~\Omega},
\\
\displaystyle{c_0\in W^{2,\infty}(\Omega)~~\mbox{with}~~c_0,w_0\geq0~~\mbox{in}~~\bar{\Omega},}
\\
\displaystyle{u_0\in D(A),}\\
\end{array}
\right.
\end{equation}
where $A$ denotes the Stokes operator with domain $D(A) := W^{2,{2}}(\Omega)\cap  W^{1,{2}}_0(\Omega)
\cap L^{2}_{\sigma}(\Omega)$,
and
$L^{2}_{\sigma}(\Omega) := \{\varphi\in  L^{2}(\Omega)|\nabla\cdot\varphi = 0\}$. (see \cite{Sohr}).

Within the above frameworks, our main result concerning global existence and boundedness of solutions to (\ref{0.4})-(\ref{0.5}) is as follows.

\begin{theorem}\label{theorem3}
Let $m>1$, $\Omega\subset \mathbb{R}^2$ be a bounded domain with smooth boundary, and assume \dref{x1.73142vghf48rtgyhu}-\dref{ccvvx1.731426677gg} hold.
Then the problem (\ref{0.4})-(\ref{0.5}) admits a global-in-time weak solution
$(n,c,u,P)$, which is uniformly bounded in the sense that
\begin{equation}
\|n(\cdot, t)\|_{L^\infty(\Omega)}+\|c(\cdot, t)\|_{W^{1,\infty}(\Omega)}+\|u(\cdot, t)\|_{L^{\infty}(\Omega)}\leq C~~ \mbox{for all}~~ t>0
\label{1.163072xggttyyu}
\end{equation}
with some positive constant $C$.
\end{theorem}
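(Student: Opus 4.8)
The plan is to obtain the solution as the limit of solutions to a family of non-degenerate, globally smooth approximate problems, carrying a single coupled a~priori bound --- the entropy-type estimate announced in the abstract --- all the way to $\varepsilon$-independent $L^\infty$-estimates.

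First I would regularize: for $\varepsilon\in(0,1)$ replace $\Delta n^m$ by the uniformly parabolic $\nabla\cdot\bigl((n_\varepsilon+\varepsilon)^{m-1}\nabla n_\varepsilon\bigr)$, mollify and cut $S$ off near $\partial\Omega$ so that the no-flux boundary condition is respected while $|S_\varepsilon|\le C_S$ is kept, and tame the convective term of the Navier--Stokes equation by the Yosida approximation $Y_\varepsilon u:=(1+\varepsilon A)^{-1}u$. Standard parabolic fixed-point theory then produces global smooth solutions $(n_\varepsilon,c_\varepsilon,u_\varepsilon,P_\varepsilon)$ with $n_\varepsilon\ge0$ and $c_\varepsilon\ge0$, together with an extensibility criterion. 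Testing the first equation by $1$ gives mass conservation $\int_\Omega n_\varepsilon(\cdot,t)=\int_\Omega n_0$, the second equation gives an $L^1$-bound for $c_\varepsilon$, and testing the fluid equation by $u_\varepsilon$ --- using $\phi\in W^{2,\infty}$, the two-dimensional Gagliardo--Nirenberg inequality and the mass bound --- yields $\sup_t\|u_\varepsilon\|_{L^2}^2+\int_0^t\|\nabla u_\varepsilon\|_{L^2}^2\le C\bigl(1+\int_0^t\|n_\varepsilon\|_{L^{4/3}}^{4/3}\bigr)$, whose right-hand side will be closed together with the entropy estimate.

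The core of the argument is the coupled entropy inequality for $\mathcal E_\varepsilon(t):=\tfrac1{m-1}\int_\Omega(n_\varepsilon+\varepsilon)^m+\tfrac12\int_\Omega|\nabla c_\varepsilon|^2$ (with the $u$-energy obtained above added with a small weight). Differentiating along the flow: the convective terms vanish because $\nabla\cdot u_\varepsilon=0$ and $u_\varepsilon|_{\partial\Omega}=0$; the regularized porous-medium diffusion produces the dissipation $-c_m\int_\Omega|\nabla(n_\varepsilon+\varepsilon)^{(2m-1)/2}|^2$; the chemotactic term, integrated by parts and estimated by nothing more than $|S_\varepsilon|\le C_S$, is bounded by $C\int_\Omega(n_\varepsilon+\varepsilon)^{m-1}|\nabla n_\varepsilon|\,|\nabla c_\varepsilon|$ and split by Young's inequality into half of that dissipation plus $C\int_\Omega(n_\varepsilon+\varepsilon)|\nabla c_\varepsilon|^2$; and testing the $c$-equation by $-\Delta c_\varepsilon$ contributes $-\int_\Omega|\Delta c_\varepsilon|^2-\int_\Omega|\nabla c_\varepsilon|^2-\int_\Omega n_\varepsilon\Delta c_\varepsilon-\int_\Omega\nabla u_\varepsilon:(\nabla c_\varepsilon\otimes\nabla c_\varepsilon)$, the last term again using $\nabla\cdot u_\varepsilon=0$ and $u_\varepsilon|_{\partial\Omega}=0$. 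It then remains to absorb $\int_\Omega(n_\varepsilon+\varepsilon)|\nabla c_\varepsilon|^2$, $\int_\Omega n_\varepsilon|\Delta c_\varepsilon|$ and $\int_\Omega\nabla u_\varepsilon:(\nabla c_\varepsilon\otimes\nabla c_\varepsilon)$ into $\delta\bigl(\|\nabla(n_\varepsilon+\varepsilon)^{(2m-1)/2}\|_{L^2}^2+\|\Delta c_\varepsilon\|_{L^2}^2\bigr)$ plus terms $g_\varepsilon(t)\mathcal E_\varepsilon(t)$ with $g_\varepsilon$ bounded in $L^1_{loc}$ uniformly in $\varepsilon$; this is achieved by Gagliardo--Nirenberg interpolation exploiting the $H^1$-information on $(n_\varepsilon+\varepsilon)^{(2m-1)/2}$, the elliptic $W^{2,2}$-estimate for $c_\varepsilon$, and the $L^2_tH^1_x$-bound for $u_\varepsilon$, and it is precisely here that the restriction $m>1$ is used and is seen to be sharp. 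One must also note that the estimate $\|D^2c_\varepsilon\|_{L^2}^2\le\|\Delta c_\varepsilon\|_{L^2}^2+C_\Omega\|\nabla c_\varepsilon\|_{L^2}^2$, valid on a general smooth domain with the curvature boundary term controlled by a trace inequality rather than discarded by convexity, is what removes the convexity hypothesis of previous work. A Grönwall argument, jointly with the $u$-energy estimate above, then gives $\sup_{t>0}\mathcal E_\varepsilon(t)+\int_0^\infty\!\bigl(\|\nabla(n_\varepsilon+\varepsilon)^{(2m-1)/2}\|_{L^2}^2+\|\Delta c_\varepsilon\|_{L^2}^2\bigr)dt\le C$ uniformly in $\varepsilon$.

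Finally, combining this bound with a parabolic Gagliardo--Nirenberg inequality yields $n_\varepsilon\in L^{3m-1}$ on every unit time interval, uniformly in $\varepsilon$; since $m>1$ this exponent exceeds $2$, which is the decisive gain. Feeding this into smoothing estimates for the Neumann heat semigroup (for $c_\varepsilon$) and the Stokes semigroup (for $u_\varepsilon$) bootstraps to $\nabla c_\varepsilon,u_\varepsilon\in L^\infty_tL^q_x$ for arbitrarily large $q$; a Moser-type $L^p$-iteration for the $n_\varepsilon$-equation then gives $\sup_\varepsilon\|n_\varepsilon\|_{L^\infty((0,\infty)\times\Omega)}<\infty$, hence also $\varepsilon$-uniform bounds for $\|c_\varepsilon\|_{L^\infty_tW^{1,\infty}_x}$ and $\|u_\varepsilon\|_{L^\infty_{t,x}}$. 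These bounds, the dissipation estimates and time-derivative bounds read off from the equations furnish, via Aubin--Lions, strong convergence of a subsequence $(n_\varepsilon,c_\varepsilon,u_\varepsilon)\to(n,c,u)$; one passes to the limit in the weak formulation, recovers $P$ through de Rham's theorem, and the uniform bounds survive in the limit, giving \dref{1.163072xggttyyu}. The main obstacle throughout is the coupled entropy inequality: absorbing the genuinely critical quantities $\int_\Omega(n_\varepsilon+\varepsilon)|\nabla c_\varepsilon|^2$ --- produced by the full, merely bounded, possibly rotational sensitivity, for which no decay in $n$ is available --- and the fluid-transport term $\int_\Omega\nabla u_\varepsilon:(\nabla c_\varepsilon\otimes\nabla c_\varepsilon)$ into the available dissipation, for every $m>1$ and without convexity of $\Omega$.
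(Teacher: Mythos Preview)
Your overall architecture --- regularize by non-degenerate diffusion plus Yosida approximation, derive the coupled entropy bound on $\int_\Omega(n_\varepsilon+\varepsilon)^m+\int_\Omega|\nabla c_\varepsilon|^2$, bootstrap to $L^\infty$ via semigroup smoothing and Moser iteration, then pass to the limit by Aubin--Lions --- is exactly the paper's. The bootstrap you sketch (space--time $L^{3m-1}$ for $n_\varepsilon$, then semigroup estimates) is a legitimate variant of the paper's route through $\|\nabla c_\varepsilon\|_{L^{2m}}$ and $\|\nabla u_\varepsilon\|_{L^2}$.

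There is, however, a genuine gap in the way you close the entropy inequality. After Young's inequality the dangerous contribution is $\int_\Omega(n_\varepsilon+\varepsilon)|\nabla c_\varepsilon|^2$. With only the mass bound on $n_\varepsilon$ and the quantities contained in $\mathcal E_\varepsilon$ itself at your disposal, Gagliardo--Nirenberg gives at best
\[
\int_\Omega(n_\varepsilon+\varepsilon)|\nabla c_\varepsilon|^2
\;\le\;\delta\Bigl(\|\nabla(n_\varepsilon+\varepsilon)^{\frac{2m-1}{2}}\|_{L^2}^2+\|\Delta c_\varepsilon\|_{L^2}^2\Bigr)
\;+\;C\|\nabla c_\varepsilon\|_{L^2}^{\frac{2m-1}{m-1}},
\]
and the last exponent exceeds $2$ for every $m>1$, so the remainder is \emph{superlinear} in $\mathcal E_\varepsilon$ and Gr\"onwall cannot close the estimate. (If instead you interpolate $\|\nabla c_\varepsilon\|_{L^{4m/(2m-1)}}$ against $\|c_\varepsilon\|_{L^{2}}$ and $\|\Delta c_\varepsilon\|_{L^2}$, the resulting power of $\|\Delta c_\varepsilon\|_{L^2}$ is $(2m+1)/(2m-1)$, which is subcritical only for $m>3/2$.) The same circularity affects your $L^2_tH^1_x$-bound on $u_\varepsilon$: you write it as $\le C(1+\int_0^t\|n_\varepsilon\|_{L^{4/3}}^{4/3})$, but the right-hand side is itself controlled only through $\mathcal E_\varepsilon$, so it cannot serve as the a~priori $L^1_t$-coefficient $g_\varepsilon$ you need.

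The paper resolves precisely this by inserting two preliminary steps \emph{before} the main entropy estimate: a lower-order coupled energy identity (their Lemma~3.2) yielding uniform bounds on $\int_\Omega c_\varepsilon^2$, $\int_\Omega|u_\varepsilon|^2$ and the space--time quantities $\int_t^{t+\tau}\!\int_\Omega\bigl((n_\varepsilon+\varepsilon)^{2m-4}|\nabla n_\varepsilon|^2+|\nabla c_\varepsilon|^2+|\nabla u_\varepsilon|^2\bigr)$; and then, using these, an $L^q$-bound on $c_\varepsilon$ for every $q<\infty$ (their Lemma~3.3). Choosing $q=l_0>\tfrac{1}{m-1}$ makes the interpolation of $\int_\Omega|\nabla c_\varepsilon|^{4m/(2m-1)}$ between $\|c_\varepsilon\|_{L^{l_0}}$ and $\|\Delta c_\varepsilon\|_{L^2}$ subcritical for \emph{all} $m>1$, and the independent $L^2_tH^1_x$-bound on $u_\varepsilon$ furnishes the Gr\"onwall coefficient. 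You should either incorporate these preliminary estimates or indicate an alternative mechanism that works in the range $1<m\le 3/2$; as written, the absorption step ``achieved by Gagliardo--Nirenberg interpolation'' does not go through there.
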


\begin{remark}
(i) If $u\equiv0$, Theorem \ref{theorem3} is (partly) coincides with Theorem 4.1 of \cite{Winkler79},
which is {\bf optimal} according to the fact that the 2D fluid-free system  admits a global bounded classical solution for $m>1$ as mentioned by \cite{Tao794} (see also \cite{Winkler79}).

(ii) Theorem \ref{theorem3} extends the results of Li, Wang and Xiang \cite{Liggghh793},
who proved the possibility of boundedness in the case that $\Omega$ is a bounded {\bf convex} domain $\Omega\subset \mathbb{R}^2$ with smooth boundary,
$\kappa=0$ and  $S$ satisfies \dref{x1.73142vghf48rtgyhu} as well as \dref{x1.73142vghf48gg} with some $m>1$.
\end{remark}

This paper is organized as follows.
In Section 2, we do some preliminary works and propose a approximate problem.
In Section 3, we use some iteration technique to establish the necessary a priori estimates.
Finally, in Section 4, we obtain the global existence and boundedness of the solutions for the system (\ref{0.4})-(\ref{0.5}) in a bounded domain.

\section{Preliminaries}

In order to construct a weak solutions by an approximation procedure, we construct the approximate problems as follows
\begin{equation}\label{1.1fghyuisda}
\left\{\begin{array}{lll}
&n_{\varepsilon t}+u_{\varepsilon}\cdot\nabla n_{\varepsilon}
=\Delta (n_{\varepsilon}+\varepsilon)^m-\nabla\cdot(n_{\varepsilon}S_\varepsilon(x, n_{\varepsilon}, c_{\varepsilon})\nabla c_{\varepsilon}),\quad
&x\in \Omega,\; t>0,
\\
&c_{\varepsilon t}+u_{\varepsilon}\cdot\nabla c_{\varepsilon}=\Delta c_{\varepsilon}-c_{\varepsilon}+n_{\varepsilon},\quad
&x\in \Omega,\; t>0,
\\
&u_{\varepsilon t}+\nabla P_{\varepsilon}=\Delta u_{\varepsilon}-\kappa (Y_{\varepsilon}u_{\varepsilon} \cdot \nabla)u_{\varepsilon}+n_{\varepsilon}\nabla \phi,\quad
&x\in \Omega,\; t>0,
\\
&\nabla\cdot u_{\varepsilon}=0,\quad
&x\in \Omega,\; t>0,
\\
&\disp{\nabla n_{\varepsilon}\cdot\nu=\nabla c_{\varepsilon}\cdot\nu=0,u_{\varepsilon}=0},\quad
&x\in \partial\Omega,\; t>0,
\\
&\disp{n_{\varepsilon}(x,0)=n_0(x),c_{\varepsilon}(x,0)=c_0(x),\;u_{\varepsilon}(x,0)=u_0(x)},\quad
&x\in \Omega,
\end{array}\right.
\end{equation}
where
$$
S_\varepsilon(x,n,c):= \rho_\varepsilon(x)\chi_\varepsilon(u)S(x, n, c),~~ x\in\bar{\Omega},~~n\geq0,~~c\geq0,
$$
$$
\rho_\varepsilon \in C^\infty_0 (\Omega)~~\mbox{such that}~~0\leq\rho_\varepsilon\leq1~~\mbox{in}~~\Omega~~\mbox{and}~~\rho_\varepsilon\nearrow1~~\mbox{in}~~\Omega~~\mbox{as}~~\varepsilon\searrow0,
$$
$$
\chi_\varepsilon \in C^\infty_0 ([0,\infty))~~\mbox{such that}~~0\leq\chi_\varepsilon\leq1~~\mbox{in}~~[0,\infty)~~\mbox{and}~~\chi_\varepsilon\nearrow1~~\mbox{in}~~[0,\infty)~~\mbox{as}~~\varepsilon\searrow0,
$$
and
$$
Y_{\varepsilon}w := (1 + \varepsilon A)^{-1}w ~\mbox{for all}~ w\in L^2_{\sigma}(\Omega)
$$
is a standard Yosida approximation.

By the well-established fixed-point arguments (see Lemma 2.1 of \cite{Winkler51215}, \cite{Winkler11215} and Lemma 2.1 of \cite{Painter55677}),
we could  show  the local solvability of system \dref{1.1fghyuisda}.

\begin{lemma}\label{lemma70}
Let $\Omega \subset \mathbb{R}^2$ be a bounded domain with smooth boundary, and assume \dref{x1.73142vghf48rtgyhu}-\dref{ccvvx1.731426677gg} hold.
For any $\varepsilon\in(0,1)$, there exist $T_{max,\varepsilon}\in (0,\infty]$ and
a classical solution $(n_\varepsilon, c_\varepsilon, u_\varepsilon, P_\varepsilon)$ of system \dref{1.1fghyuisda} in $\Omega\times[0,T_{max,\varepsilon})$.
Here
\begin{equation}\label{1.1ddfghyuisda}
\left\{\begin{array}{ll}
n_\varepsilon\in C^0(\bar{\Omega}\times[0,T_{max,\varepsilon}))\cap C^{2,1}(\bar{\Omega}\times(0,T_{max,\varepsilon})),
\\
c_\varepsilon\in  C^0(\bar{\Omega}\times[0,T_{max,\varepsilon}))\cap C^{2,1}(\bar{\Omega}\times(0,T_{max,\varepsilon}))\cap\bigcap_{p>1} L^\infty([0,T_{max,\varepsilon}); W^{1,p}(\Omega)),
\\
u_\varepsilon\in  C^0(\bar{\Omega}\times[0,T_{max,\varepsilon}))\cap C^{2,1}(\bar{\Omega}\times(0,T_{max,\varepsilon}))\cap \bigcap_{\gamma\in(0,1)}C^0([0,T_{max,\varepsilon}); D(A^\gamma)),
\\
P_\varepsilon\in  C^{1,0}(\bar{\Omega}\times(0,T_{max,\varepsilon})).
\end{array}\right.
\end{equation}
Moreover,  $n_\varepsilon$ and $c_\varepsilon$ are nonnegative in
$\Omega\times(0, T_{max,\varepsilon})$, and if $T_{max,\varepsilon}<+\infty$, then
$$
\limsup_{t\nearrow T_{max,\varepsilon}}[\|n_\varepsilon(\cdot, t)\|_{L^\infty(\Omega)}+\|c_\varepsilon(\cdot, t)\|_{W^{1,\infty}(\Omega)}+\|A^\gamma u_\varepsilon(\cdot, t)\|_{L^{2}(\Omega)}]=\infty
$$
for all $p > 2$ and $\gamma\in(\frac{1}{2}, 1)$.
\end{lemma}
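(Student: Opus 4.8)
The plan is to prove Lemma~\ref{lemma70} by a standard Banach fixed point argument applied to the regularized system \dref{1.1fghyuisda}, exploiting the fact that every genuinely dangerous nonlinearity has been tamed by the regularizations: the sensitivity $S_\varepsilon$ is smooth and compactly supported in $\Omega$ (via $\rho_\varepsilon$) and uniformly bounded (via $\chi_\varepsilon$ together with \dref{x1.73142vghf48gg}), the convective term in the fluid equation carries the smoothing Yosida factor $Y_\varepsilon$, and the porous-medium diffusion $\Delta(n_\varepsilon+\varepsilon)^m=\nabla\cdot\big(m(n_\varepsilon+\varepsilon)^{m-1}\nabla n_\varepsilon\big)$ is \emph{uniformly} parabolic as long as $n_\varepsilon\ge 0$, since its diffusivity is bounded below by $m\varepsilon^{m-1}>0$.

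Concretely, I would fix $R>0$ and, for small $T\in(0,1)$, consider the closed ball $\mathcal B$ of radius $R$ around the constant-in-time triple $(n_0,c_0,u_0)$ in $X_T:=C^0(\bar\Omega\times[0,T])\times C^0(\bar\Omega\times[0,T])\times C^0(\bar\Omega\times[0,T];\mathbb R^2)$, and define a map $\Phi$ on $\mathcal B$ as follows. Given $(\tilde n,\tilde c,\tilde u)\in\mathcal B$, let $u$ solve the linear Stokes problem $u_t+\nabla P=\Delta u-\kappa(Y_\varepsilon\tilde u\cdot\nabla)\tilde u+\tilde n\nabla\phi$, $\nabla\cdot u=0$, $u|_{\partial\Omega}=0$, $u(\cdot,0)=u_0$; let $c$ solve $c_t+\tilde u\cdot\nabla c=\Delta c-c+\tilde n$ with Neumann data and $c(\cdot,0)=c_0$; and let $n$ solve the linear, uniformly parabolic, frozen-coefficient problem $n_t+\tilde u\cdot\nabla n=\nabla\cdot\big(m(\tilde n+\varepsilon)^{m-1}\nabla n\big)-\nabla\cdot\big(nS_\varepsilon(x,\tilde n,\tilde c)\nabla\tilde c\big)$ with Neumann data and $n(\cdot,0)=n_0$. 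Set $\Phi(\tilde n,\tilde c,\tilde u):=(n,c,u)$. Using maximal Sobolev regularity and smoothing estimates for the Neumann heat semigroup and the Stokes semigroup, together with standard parabolic $L^p$ and Schauder estimates, one shows that $\Phi$ maps $\mathcal B$ into itself and is a contraction on $\mathcal B$ with respect to the $X_T$-norm, the requisite smallness coming from the factor $T^\theta$ (with some $\theta>0$) picked up when estimating time integrals of the semigroups against the data; this is exactly where the cited fixed-point lemmas (Lemma~2.1 of \cite{Winkler51215}, \cite{Winkler11215}, Lemma~2.1 of \cite{Painter55677}) are invoked.

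The unique fixed point is the desired local solution, and its regularity \dref{1.1ddfghyuisda} then follows by a bootstrap: feeding the fixed point back into the equations and invoking interior parabolic Schauder theory gives $n_\varepsilon,c_\varepsilon,u_\varepsilon\in C^{2,1}$, the $W^{1,p}$ bound on $c_\varepsilon$ comes from $L^p$–$W^{1,p}$ smoothing of the Neumann heat semigroup applied to the bounded source $n_\varepsilon$, the $D(A^\gamma)$ regularity of $u_\varepsilon$ from fractional-power estimates for the Stokes semigroup, and the pressure $P_\varepsilon$ is recovered through the Helmholtz projection and de Rham's theorem. Nonnegativity of $c_\varepsilon$ is immediate from the comparison principle since $n_\varepsilon\ge0$ makes $c\equiv0$ a subsolution of the second equation, and, once $n_\varepsilon$ is known to be smooth, the first equation can be written as a linear parabolic equation for $n_\varepsilon$ with bounded drift and bounded zero-order coefficient, so that $n\equiv0$ is a subsolution and $n_\varepsilon\ge0$ follows as well. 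The extensibility criterion is obtained in the usual way: if $T_{max,\varepsilon}<\infty$ while the quantity $\|n_\varepsilon(\cdot,t)\|_{L^\infty}+\|c_\varepsilon(\cdot,t)\|_{W^{1,\infty}}+\|A^\gamma u_\varepsilon(\cdot,t)\|_{L^2}$ stayed bounded along some $t_k\nearrow T_{max,\varepsilon}$, then the local existence statement applied with initial time $t_k$ would produce a solution on $[t_k,t_k+\tau]$ with $\tau>0$ independent of $k$, contradicting maximality.

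I expect the main technical burden — not a true obstacle, given the regularizations and the available fixed-point machinery — to be the simultaneous handling of the three distinct linear solvers and the verification that the contraction constant can be made strictly less than $1$ for small $T$; the quasilinear, porous-medium structure of the $n_\varepsilon$-equation, which forces the frozen-coefficient formulation and a careful treatment of the $(\tilde n+\varepsilon)^{m-1}$ dependence in the estimates, is the point demanding the most care.
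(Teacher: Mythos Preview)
Your proposal is correct and follows precisely the route the paper itself indicates: the paper does not give a detailed proof of Lemma~\ref{lemma70} but merely records that it follows from the well-established fixed-point arguments in \cite{Winkler51215,Winkler11215,Painter55677}, and your sketch is exactly a fleshed-out version of that standard contraction-mapping scheme (frozen-coefficient linear subproblems, semigroup smoothing, bootstrap, comparison for nonnegativity, and the usual maximality argument for the blow-up criterion). No further comparison is needed.
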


\begin{lemma}(\cite{Tao41215})\label{lemma630}
Let $T\in(0,\infty]$, $\sigma\in(0,T)$, $A>0$ and $B>0$, and suppose that $y:[0,T)\rightarrow[0,\infty)$ is absolutely continuous and such that
$$
 y'(t)+Ay(t)\leq h(t) ~~\mbox{for a.e.}~~t\in(0,T)
$$
with some nonnegative function $h\in  L^1_{loc}([0, T))$ satisfying
$$
\int_{t}^{t+\sigma}h(s)ds\leq B~~\mbox{for all}~~t\in(0,T-\sigma).
$$
Then
$$
y(t)\leq \max\{y_0+B,\frac{B}{A\tau}+2B\}~~\mbox{for all}~~t\in(0,T).
$$
\end{lemma}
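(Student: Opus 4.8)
The statement is a Gr\"onwall-type comparison lemma, and the plan is to treat the differential inequality as a linear one and exploit the exponential relaxation produced by the dissipative term $Ay$. (Here and below I read the constant $\tau$ in the claimed bound as $\sigma$, which is surely intended.) First I would multiply $y'+Ay\le h$ by the integrating factor $e^{At}$; since $y$ is absolutely continuous and $e^{At}$ is smooth, the product $e^{At}y$ is absolutely continuous and satisfies $(e^{At}y)'\le e^{At}h$ for a.e.\ $t$. Integrating over $[t_0,t]$ yields the variation-of-constants inequality
$$ y(t)\le e^{-A(t-t_0)}\,y(t_0)+\int_{t_0}^{t}e^{-A(t-s)}h(s)\,ds\qquad\text{for }\ 0\le t_0\le t<T. $$
Everything then reduces to controlling the convolution integral using only the sliding-window bound $\int_t^{t+\sigma}h\le B$.

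The decisive point is that $h$ is controlled only in $L^1$ over windows of length $\sigma$, not pointwise, so I would estimate the forcing one window at a time. For $t\le\sigma$ I take $t_0=0$ and use $e^{-A(t-s)}\le1$, so that the integral is at most $\int_0^\sigma h\le B$ and hence $y(t)\le y_0+B$. For $t>\sigma$ I instead take $t_0=t-\sigma$; on the last window $[t-\sigma,t]$ one again has $e^{-A(t-s)}\le1$, whence $\int_{t-\sigma}^{t}e^{-A(t-s)}h(s)\,ds\le B$ and therefore
$$ y(t)\le e^{-A\sigma}\,y(t-\sigma)+B. $$
This is the crux: the windowed hypothesis has been converted into a one-step recursion whose multiplier $e^{-A\sigma}<1$ produces genuine contraction.

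To close the estimate I would pass to the running supremum. Fixing $t\in(0,T)$ and setting $\eta:=\sup_{s\in[0,t]}y(s)$, which is finite because $y$ is absolutely continuous on the compact interval $[0,t]$, the two cases above give $y(\tau)\le\max\{y_0+B,\ e^{-A\sigma}\eta+B\}$ for every $\tau\in[0,t]$, and hence $\eta\le\max\{y_0+B,\ e^{-A\sigma}\eta+B\}$. If the first term dominates we obtain $\eta\le y_0+B$ directly; otherwise $\eta\le e^{-A\sigma}\eta+B$, so that $\eta\le B/(1-e^{-A\sigma})$. It then remains only to simplify the geometric factor, for which I would invoke the elementary inequality $1+x\le e^{x}$, equivalently $(1+x)e^{-x}\le1$, with $x=A\sigma$: this gives $1/(1-e^{-A\sigma})\le 1+\tfrac{1}{A\sigma}$ and hence $\eta\le B+\tfrac{B}{A\sigma}\le 2B+\tfrac{B}{A\sigma}$. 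Since $y(t)\le\eta$, combining the two branches produces exactly $y(t)\le\max\{y_0+B,\ \tfrac{B}{A\sigma}+2B\}$.

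I expect no serious analytic obstacle; the lemma is elementary once the integrating factor is in place. The only genuinely delicate point is translating the windowed $L^1$-control of $h$ into a pointwise-in-time bound on $y$, and the device that accomplishes this is precisely the single-window recursion combined with the running supremum, which lets the exponential relaxation absorb the forcing accumulated over the earlier windows. Minor care is needed to justify $\int_0^\sigma h\le B$ from the stated bound (letting the window base tend to $0$ and using $h\in L^1_{loc}$) and to ensure the recursion window $[t-\sigma,t]$ stays inside $(0,T)$, but both are routine.
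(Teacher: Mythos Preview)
Your argument is correct. Note, however, that the paper does not supply its own proof of this lemma: it is quoted directly from \cite{Tao41215} and stated without demonstration, so there is no in-paper proof to compare against. Your integrating-factor approach followed by the one-window recursion $y(t)\le e^{-A\sigma}y(t-\sigma)+B$ and the running-supremum closure is precisely the standard proof of this Gr\"onwall-type estimate as it appears in the cited reference; the simplification $1/(1-e^{-A\sigma})\le 1+1/(A\sigma)$ via $e^x\ge 1+x$ is also the usual device, and in fact yields the slightly sharper bound $B+\tfrac{B}{A\sigma}$ before relaxing to the stated $2B+\tfrac{B}{A\sigma}$. The minor points you flag (continuity of $t\mapsto\int_t^{t+\sigma}h$ to cover the window at $t=0$, and the interval containment for $t>\sigma$) are handled correctly.
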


\section{Some basic priori estimates}

In order to establish the global solvability of system \dref{1.1fghyuisda}, in this section,
we plan to derive some estimates for the approximate system \dref{1.1fghyuisda}, which  plays  a significant role in obtaining the main result.
Let us first state two basic estimates on $n_{\varepsilon}$ and $c_{\varepsilon}$.

\begin{lemma}\label{fvfgfflemma45} (\cite{Zhengssdddd00})
The solution of \dref{1.1fghyuisda} satisfies
\begin{equation}
\int_{\Omega}{n_{\varepsilon}}= \int_{\Omega}{n_{0}}~~\mbox{for all}~~ t\in(0, T_{max,\varepsilon})
\label{ddfgczhhhh2.5ghju48cfg924ghyuji}
\end{equation}
as well as
$$
\int_{\Omega}{c_{\varepsilon}}\leq \max\{\int_{\Omega}{n_{0}},\int_{\Omega}{c_{0}}\}~~\mbox{for all}~~ t\in(0, T_{max,\varepsilon}).
$$
\end{lemma}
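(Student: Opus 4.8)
The plan is to test each of the first two equations of \dref{1.1fghyuisda} against the constant function $1$ --- that is, simply to integrate over $\Omega$ --- and to exploit the boundary conditions together with the compact support of $\rho_\varepsilon$.

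For the conservation of mass I would integrate the first equation over $\Omega$ and handle the three terms on the right separately. The convective contribution $\int_\Omega u_\varepsilon\cdot\nabla n_\varepsilon$ equals $\int_\Omega\nabla\cdot(n_\varepsilon u_\varepsilon)$ because $\nabla\cdot u_\varepsilon=0$, and hence vanishes after integration by parts since $u_\varepsilon=0$ on $\partial\Omega$. The diffusion term satisfies $\int_\Omega\Delta(n_\varepsilon+\varepsilon)^m=\int_{\partial\Omega}\partial_\nu(n_\varepsilon+\varepsilon)^m=0$ in view of $\nabla n_\varepsilon\cdot\nu=0$. The rotational flux term $\int_\Omega\nabla\cdot(n_\varepsilon S_\varepsilon\nabla c_\varepsilon)=\int_{\partial\Omega}n_\varepsilon(S_\varepsilon\nabla c_\varepsilon)\cdot\nu$ is the one place where a little care is needed: since $S_\varepsilon$ is matrix-valued, the vector $S_\varepsilon\nabla c_\varepsilon$ need not be tangential at $\partial\Omega$ even though $\nabla c_\varepsilon\cdot\nu=0$; however, the factor $\rho_\varepsilon\in C^\infty_0(\Omega)$ forces $S_\varepsilon$ to vanish in a neighbourhood of $\partial\Omega$, so this boundary integral is zero. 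Collecting these facts gives $\frac{d}{dt}\int_\Omega n_\varepsilon=0$, and \dref{ddfgczhhhh2.5ghju48cfg924ghyuji} follows.

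For the $L^1$ bound on $c_\varepsilon$ I would integrate the second equation over $\Omega$. The convective term vanishes as above, and $\int_\Omega\Delta c_\varepsilon=0$ thanks to $\nabla c_\varepsilon\cdot\nu=0$, so, writing $y(t):=\int_\Omega c_\varepsilon(\cdot,t)$ and inserting the mass identity just established, one arrives at the scalar linear ODE $y'(t)=-y(t)+\int_\Omega n_0$ on $(0,T_{max,\varepsilon})$. Integrating explicitly yields $y(t)=\int_\Omega n_0+\bigl(\int_\Omega c_0-\int_\Omega n_0\bigr)e^{-t}$, whence $y(t)\le\max\{\int_\Omega n_0,\int_\Omega c_0\}$ for all such $t$; equivalently, the final step may be phrased as an elementary ODE comparison argument.

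I do not anticipate any genuine obstacle here. The only point worth flagging is the vanishing of the chemotactic flux contribution at the boundary, which --- in contrast to the scalar-sensitivity case --- is not automatic from $\nabla c_\varepsilon\cdot\nu=0$ but instead relies precisely on the regularization through the cutoff $\rho_\varepsilon$.
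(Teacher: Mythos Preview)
Your argument is correct and is precisely the standard derivation of these elementary a priori bounds. The paper itself does not supply a proof of this lemma at all: it merely states the result and attributes it to \cite{Zhengssdddd00}. So there is nothing to compare against, and your write-up in fact fills a gap that the authors left to the cited reference. The one subtlety you highlighted --- that the vanishing of the tensor-valued chemotactic flux on $\partial\Omega$ is ensured by the cutoff $\rho_\varepsilon\in C^\infty_0(\Omega)$ rather than by the Neumann condition on $c_\varepsilon$ --- is exactly the reason the regularization $S_\varepsilon=\rho_\varepsilon\chi_\varepsilon S$ is introduced, and it is good that you made this explicit.
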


According to Lemma \ref{fvfgfflemma45}, we can obtain the following energy-type equality,
which was also used in Lemma 3.3 in \cite{Zhengssdddd00} (see also \cite{Zhenddddgssddsddfff00,Wang23421215}).

\begin{lemma}\label{lemmajddggmk43025xxhjklojjkkk}
Let $m>1$.
Then there exists $C>0$ independent of $\varepsilon$ such that the solution of \dref{1.1fghyuisda} satisfies
\begin{equation}
\disp{\int_{\Omega}n_{\varepsilon}+\int_{\Omega} (n_{\varepsilon}+\varepsilon)^{m-1}+\int_{\Omega} c_{\varepsilon}^2
+\int_{\Omega}  | {u_{\varepsilon}}|^2\leq C~~~\mbox{for all}~~ t\in (0, T_{max,\varepsilon}).}
\label{czfvgb2.5ghhjuyuccvviihjj}
\end{equation}
Moreover, for all $t\in(0, T_{max,\varepsilon}-\tau)$, it holds that one can find a constant $C > 0$ independent of $\varepsilon$ such that
\begin{equation}
\disp{\int_{t}^{t+\tau}\int_{\Omega} \left[  (n_{\varepsilon}+\varepsilon)^{2m-4} |\nabla {n_{\varepsilon}}|^2
+|\nabla {c_{\varepsilon}}|^2+ |\nabla {u_{\varepsilon}}|^2\right]\leq C,}
\label{bnmbncz2.5ghhjuyuivvbnnihjj}
\end{equation}
where $\tau=\min\{1,\frac{1}{6}T_{max,\varepsilon}\}.$
\end{lemma}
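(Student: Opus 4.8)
\emph{Proposed proof.} The plan is to build a single Lyapunov functional from the three components by testing each equation of \dref{1.1fghyuisda} against a natural multiplier, and then to feed the resulting differential inequality into Lemma \ref{lemma630}. First I would test the second equation by $c_\varepsilon$; since $\nabla\cdot u_\varepsilon=0$ and $u_\varepsilon|_{\partial\Omega}=0$ kill the convective term, this gives
$$
\tfrac12\tfrac{d}{dt}\int_\Omega c_\varepsilon^2+\int_\Omega|\nabla c_\varepsilon|^2+\int_\Omega c_\varepsilon^2=\int_\Omega n_\varepsilon c_\varepsilon .
$$
Testing the third equation by $u_\varepsilon$, using $\nabla\cdot Y_\varepsilon u_\varepsilon=0$ to eliminate the quadratic term $(Y_\varepsilon u_\varepsilon\cdot\nabla)u_\varepsilon$ and $\nabla\cdot u_\varepsilon=0$ to remove the pressure, gives
$$
\tfrac12\tfrac{d}{dt}\int_\Omega|u_\varepsilon|^2+\int_\Omega|\nabla u_\varepsilon|^2=\int_\Omega n_\varepsilon u_\varepsilon\cdot\nabla\phi .
$$
For $m\ge2$ I would then test the first equation by $(n_\varepsilon+\varepsilon)^{m-2}$, whose primitive $\tfrac1{m-1}(s+\varepsilon)^{m-1}$ is convex; noting $\int_\Omega u_\varepsilon\cdot\nabla(n_\varepsilon+\varepsilon)^{m-1}=0$, two integrations by parts yield
$$
\tfrac1{m-1}\tfrac{d}{dt}\int_\Omega(n_\varepsilon+\varepsilon)^{m-1}+m(m-2)\int_\Omega(n_\varepsilon+\varepsilon)^{2m-4}|\nabla n_\varepsilon|^2=(m-2)\int_\Omega(n_\varepsilon+\varepsilon)^{m-3}n_\varepsilon\,\nabla n_\varepsilon\cdot(S_\varepsilon\nabla c_\varepsilon).
$$
When $1<m<2$ the sign in front of the diffusion term is wrong, but then $\int_\Omega(n_\varepsilon+\varepsilon)^{m-1}\le|\Omega|+\int_\Omega n_\varepsilon$ is already controlled by \dref{ddfgczhhhh2.5ghju48cfg924ghyuji}, and the corresponding space--time gradient bound is instead recovered by testing against $(n_\varepsilon+\varepsilon)^{m-1}$ and reorganizing.

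\textbf{Closing the $n$--$c$ coupling inside the diffusion.} On the right-hand side of the $n_\varepsilon$-identity, $|S_\varepsilon|\le C_S$ and $n_\varepsilon\le n_\varepsilon+\varepsilon$ give the bound $(m-2)C_S\int_\Omega(n_\varepsilon+\varepsilon)^{m-2}|\nabla n_\varepsilon|\,|\nabla c_\varepsilon|$; since $(n_\varepsilon+\varepsilon)^{2m-4}|\nabla n_\varepsilon|^2=\big((n_\varepsilon+\varepsilon)^{m-2}|\nabla n_\varepsilon|\big)^2$, Young's inequality absorbs this into $\tfrac{m(m-2)}{2}\int_\Omega(n_\varepsilon+\varepsilon)^{2m-4}|\nabla n_\varepsilon|^2+C\int_\Omega|\nabla c_\varepsilon|^2$. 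Forming $\lambda\cdot(\text{$n_\varepsilon$-id})+\tfrac12(\text{$c_\varepsilon$-id})+\tfrac12(\text{$u_\varepsilon$-id})$ with $\lambda>0$ sufficiently small, the extra $\int_\Omega|\nabla c_\varepsilon|^2$ is absorbed into the $c_\varepsilon$-dissipation, and with $y_\varepsilon:=\tfrac{\lambda}{m-1}\int_\Omega(n_\varepsilon+\varepsilon)^{m-1}+\tfrac12\int_\Omega c_\varepsilon^2+\tfrac12\int_\Omega|u_\varepsilon|^2$ one reaches
$$
y_\varepsilon'(t)+c_1\!\left(\int_\Omega(n_\varepsilon+\varepsilon)^{2m-4}|\nabla n_\varepsilon|^2+\int_\Omega|\nabla c_\varepsilon|^2+\int_\Omega|\nabla u_\varepsilon|^2\right)+\int_\Omega c_\varepsilon^2\le\int_\Omega n_\varepsilon c_\varepsilon+\|\nabla\phi\|_{L^\infty(\Omega)}\int_\Omega n_\varepsilon|u_\varepsilon| .
$$

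\textbf{The remaining source terms and conclusion.} It remains to dominate $\int_\Omega n_\varepsilon c_\varepsilon$ and $\int_\Omega n_\varepsilon|u_\varepsilon|$ uniformly in $\varepsilon$. Here I would use the mass identities $\|n_\varepsilon\|_{L^1(\Omega)}=\|n_0\|_{L^1(\Omega)}$ and $\|c_\varepsilon\|_{L^1(\Omega)}\le C$ from Lemma \ref{fvfgfflemma45}, the Poincaré inequality for $u_\varepsilon\in W^{1,2}_0(\Omega)$, and the two-dimensional Gagliardo--Nirenberg inequality applied to $(n_\varepsilon+\varepsilon)^{m-1}$ --- whose $L^1$-norm is part of $y_\varepsilon$ and whose gradient is supplied by the dissipation term --- to obtain an $L^q$-bound for $n_\varepsilon$; then Young's inequality absorbs the gradient contributions into the dissipation and bounds the rest by $\tfrac12\int_\Omega c_\varepsilon^2+c_2 y_\varepsilon+C$. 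This produces $y_\varepsilon'(t)+c_3\,y_\varepsilon(t)+c_4(\,\cdots\,)\le C$ with $\varepsilon$-independent constants, so Lemma \ref{lemma630} (with $h\equiv C$) yields $y_\varepsilon\le C$, hence \dref{czfvgb2.5ghhjuyuccvviihjj} after adding the conserved quantity $\int_\Omega n_\varepsilon$; integrating the differential inequality over $[t,t+\tau]$ then gives \dref{bnmbncz2.5ghhjuyuivvbnnihjj}.

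\textbf{Main obstacle.} The crux is the last step: since a priori $n_\varepsilon$ is only bounded in $L^1$, estimating $\int_\Omega n_\varepsilon c_\varepsilon$ uniformly in $\varepsilon$ forces one to extract an $L^q$-bound for $n_\varepsilon$ out of the rather weak porous-medium dissipation $\int_\Omega(n_\varepsilon+\varepsilon)^{2m-4}|\nabla n_\varepsilon|^2$, and it is precisely this two-dimensional interpolation bookkeeping in which the hypothesis $m>1$ enters in an essentially sharp way; a secondary nuisance is the sign change of the diffusion coefficient at $m=2$, which makes the ranges $1<m<2$ and $m\ge2$ require slightly different handling of the $n_\varepsilon$-component.
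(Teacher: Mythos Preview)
Your overall strategy is exactly the standard one that the paper invokes by citing \cite{Zhengssdddd00,Zhenddddgssddsddfff00,Wang23421215}: test the three equations against $(n_\varepsilon+\varepsilon)^{m-2}$, $c_\varepsilon$, $u_\varepsilon$, combine, and close with two-dimensional Gagliardo--Nirenberg interpolation based on the $L^1$-conservation of $n_\varepsilon$ and $c_\varepsilon$. For $m\ge 2$ your argument goes through essentially as written.

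The genuine gap is your treatment of $1<m<2$. You propose to recover the space--time bound on $\int_\Omega(n_\varepsilon+\varepsilon)^{2m-4}|\nabla n_\varepsilon|^2$ by ``testing against $(n_\varepsilon+\varepsilon)^{m-1}$ and reorganizing''. But that test function produces the dissipation $\int_\Omega(n_\varepsilon+\varepsilon)^{2m-3}|\nabla n_\varepsilon|^2$, i.e.\ $\|\nabla(n_\varepsilon+\varepsilon)^{(2m-1)/2}\|_{L^2}^2$, whereas the lemma demands $\|\nabla(n_\varepsilon+\varepsilon)^{m-1}\|_{L^2}^2$. Passing from the former to the latter costs a factor $(n_\varepsilon+\varepsilon)^{-1}\le\varepsilon^{-1}$, so the bound you obtain is not uniform in $\varepsilon$, and it is precisely this $\varepsilon$-uniform estimate that Lemma~\ref{aasslemmafggg78630jklhhjj} needs (it uses $\|\nabla(n_\varepsilon+\varepsilon)^{m-1}\|_{L^2}^2$ explicitly).

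The fix is simple and keeps your scheme intact: for $1<m<2$ keep testing with $(n_\varepsilon+\varepsilon)^{m-2}$, but multiply the resulting identity by $-1$. Since $s\mapsto s^{m-1}$ is concave, the quantity $-\tfrac{1}{m-1}\int_\Omega(n_\varepsilon+\varepsilon)^{m-1}$ is bounded above and below a priori by mass conservation, so including it (with the flipped sign) in $y_\varepsilon$ is harmless; the diffusion term then appears with coefficient $m(2-m)>0$ and gives exactly the dissipation $\int_\Omega(n_\varepsilon+\varepsilon)^{2m-4}|\nabla n_\varepsilon|^2$ you need. The cross term is still $\le\tfrac{m(2-m)}{2}\int_\Omega(n_\varepsilon+\varepsilon)^{2m-4}|\nabla n_\varepsilon|^2+C\int_\Omega|\nabla c_\varepsilon|^2$ by Young, and your choice of a small $\lambda$ absorbs the last piece as before. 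After that, your interpolation argument for $\int_\Omega n_\varepsilon c_\varepsilon$ and $\int_\Omega n_\varepsilon|u_\varepsilon|$ (Gagliardo--Nirenberg on $(n_\varepsilon+\varepsilon)^{m-1}$ with base $L^{1/(m-1)}$, combined with Gagliardo--Nirenberg on $c_\varepsilon$ and Poincar\'e on $u_\varepsilon$) closes for every $m>1$.
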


In order to obtain the boundedness of $n_{\varepsilon}$, we need to give higher norm estimates on $c_\varepsilon$.

\begin{lemma}\label{aasslemmafggg78630jklhhjj}
Let $(n_\varepsilon,c_\varepsilon,u_\varepsilon)$ be the solution of \dref{1.1ddfghyuisda} and $\tau=\min\{1,\frac{1}{6}T_{max,\varepsilon}\}$.
Then for any $q>2$, there exists $C: = C(q,K)$ independent of $\varepsilon$ such that
\begin{equation}
\|c_{\varepsilon}(\cdot, t)\|_{L^q(\Omega)}\leq C~~ \mbox{for all}~~ t\in(0,T_{max,\varepsilon}).
\label{3.10gghhjuuloollgghhhyhh}
\end{equation}
\end{lemma}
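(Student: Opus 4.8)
The plan is to combine the energy-type information from Lemma~\ref{lemmajddggmk43025xxhjklojjkkk} --- in particular the space-time bound \dref{bnmbncz2.5ghhjuyuivvbnnihjj} on $\nabla c_\varepsilon$ together with the uniform bound \dref{czfvgb2.5ghhjuyuccvviihjj} on $\int_\Omega c_\varepsilon^2$ --- with a testing procedure applied to the second equation of \dref{1.1fghyuisda}. Concretely, I would multiply $c_{\varepsilon t}+u_\varepsilon\cdot\nabla c_\varepsilon=\Delta c_\varepsilon-c_\varepsilon+n_\varepsilon$ by $c_\varepsilon^{q-1}$ and integrate over $\Omega$. Since $\nabla\cdot u_\varepsilon=0$ and $u_\varepsilon=0$ on $\partial\Omega$, the convective term drops out, and integration by parts on the diffusion term produces $-\frac{4(q-1)}{q^2}\int_\Omega|\nabla c_\varepsilon^{q/2}|^2$, using $\nabla c_\varepsilon\cdot\nu=0$. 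This yields
\begin{equation}
\frac{1}{q}\frac{d}{dt}\int_\Omega c_\varepsilon^q+\frac{4(q-1)}{q^2}\int_\Omega|\nabla c_\varepsilon^{q/2}|^2+\int_\Omega c_\varepsilon^q=\int_\Omega n_\varepsilon c_\varepsilon^{q-1}.
\label{pp:ctest}
\end{equation}

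The main work is to control the right-hand side of \dref{pp:ctest} by the dissipation terms plus an $L^1_{loc}$-in-time quantity, so that Lemma~\ref{lemma630} applies. The natural route is: first bound $\int_\Omega n_\varepsilon c_\varepsilon^{q-1}$ using that, for $m>1$, the mass bound \dref{ddfgczhhhh2.5ghju48cfg924ghyuji} and the second term in \dref{czfvgb2.5ghhjuyuccvviihjj} give uniform control of $n_\varepsilon$ in $L^1(\Omega)$ and, via \dref{bnmbncz2.5ghhjuyuivvbnnihjj}, some integrated control of $\nabla n_\varepsilon^{m-1}$ (equivalently of higher $L^p$ norms of $n_\varepsilon$ in a space-time sense). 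Then apply Young's inequality to split $\int_\Omega n_\varepsilon c_\varepsilon^{q-1}\le \delta\|c_\varepsilon^{q/2}\|_{L^{2q'}}^{?}+C_\delta\|n_\varepsilon\|_{L^s}^s$ for a suitable exponent, and absorb the $c_\varepsilon$-part into $\int_\Omega|\nabla c_\varepsilon^{q/2}|^2$ via the Gagliardo--Nirenberg inequality in the two-dimensional setting (where $\|v\|_{L^4}^2\le C\|\nabla v\|_{L^2}\|v\|_{L^2}+C\|v\|_{L^2}^2$ with $v=c_\varepsilon^{q/2}$, and $\|v\|_{L^2}^2=\int_\Omega c_\varepsilon^q$ is handled by the good $+\int_\Omega c_\varepsilon^q$ term on the left after another Young step). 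The residual $n_\varepsilon$-term is then shown to have uniformly bounded integral over each time interval of length $\tau$, precisely by \dref{bnmbncz2.5ghhjuyuivvbnnihjj} combined with Gagliardo--Nirenberg applied to $n_\varepsilon^{m-1}$ and the $L^1$ bound on $n_\varepsilon$.

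Having arranged \dref{pp:ctest} into the form $y'(t)+y(t)\le h(t)$ with $y(t)=\frac1q\int_\Omega c_\varepsilon^q$ and $h\in L^1_{loc}$ satisfying $\int_t^{t+\tau}h\le B$ uniformly in $\varepsilon$ (using the initial-data regularity \dref{ccvvx1.731426677gg} for $y(0)$), Lemma~\ref{lemma630} delivers \dref{3.10gghhjuuloollgghhhyhh} with a constant independent of $\varepsilon$ and of $t$. I expect the delicate point to be the bookkeeping of exponents: one must check that for \emph{every} $q>2$ the exponent $s$ appearing on $n_\varepsilon$ after Young's inequality is low enough that the available space-time bound \dref{bnmbncz2.5ghhjuyuivvbnnihjj} --- which only controls $n_\varepsilon$ at the level dictated by $m>1$ --- actually suffices; this is where the hypothesis $m>1$ (as opposed to $m=1$) is used in an essential way, and where one may need to first establish the estimate for $q$ in a restricted range and then bootstrap. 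A secondary technicality is ensuring all boundary terms vanish, which is immediate from the no-flux and no-slip conditions in \dref{1.1fghyuisda}.
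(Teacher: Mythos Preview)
Your proposal is correct and follows essentially the same approach as the paper: test the $c_\varepsilon$-equation with $c_\varepsilon^{p-1}$, control the source term via H\"older/Young and the two-dimensional Gagliardo--Nirenberg inequality, absorb the $c_\varepsilon$-part into the dissipation, show that the residual $n_\varepsilon$-term has bounded integral over $(t,t+\tau)$ using \dref{bnmbncz2.5ghhjuyuivvbnnihjj} together with Gagliardo--Nirenberg applied to $(n_\varepsilon+\varepsilon)^{m-1}$, and conclude via Lemma~\ref{lemma630}. The paper resolves the exponent bookkeeping you flag by working directly with $p>3+4(m-1)$ and using the $L^1$ bound on $c_\varepsilon$ (rather than $L^2$) as the low norm in Gagliardo--Nirenberg, so no bootstrap is needed---smaller $q$ then follow by H\"older.
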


\begin{proof}
Let $p>3+4(m-1)$. Multiplying the second equation in $\dref{1.1fghyuisda}$ by ${c^{p-1}_{\varepsilon}}$, using the fact $\nabla\cdot u_{\varepsilon}=0$,
and applying the integration by parts, we have
\begin{equation}
\begin{array}{rl}
&\disp{\frac{1}{p}\frac{d}{dt}\int_{\Omega}c^{{{p}}}_{\varepsilon}+({{p}-1})
\int_{\Omega}c^{{{p}-2}}_{\varepsilon}|\nabla c_{\varepsilon}|^2+\int_{\Omega}c^{{{p}}}_{\varepsilon}}
\\
=&\disp{\int_\Omega c^{p-1}_{\varepsilon}n_{\varepsilon}}
\\
\leq&\disp{\int_\Omega c^{p-1}_{\varepsilon}(n_{\varepsilon}+\varepsilon)}
\\
\leq&\disp{\|n_{\varepsilon}+\varepsilon\|_{L^\frac{p-2(m-1)}{p-4(m-1)}(\Omega)}
\left(\int_\Omega c^{\frac{(p-1)[p-2(m-1)]}{{m-1}}}_{\varepsilon}\right)^{\frac{{m-1}}{p-2(m-1)}}
~~\mbox{for all}~~t\in(0,T_{max,\varepsilon})}
\end{array}
\label{3333cz2.5114114}
\end{equation}
by the H\"{o}lder inequality.
Now, due to the Gagliardo--Nirenberg inequality and \dref{ddfgczhhhh2.5ghju48cfg924ghyuji},  for some positive constants   $\kappa_0$ and $\kappa_1$, we derive
$$
\begin{array}{rl}
&\disp\left(\int_\Omega c^{\frac{(p-1)[p-2(m-1)]}{{m-1}}}_{\varepsilon}\right)^{\frac{{m-1}}{p-2(m-1)}}
\\
=&\disp{\|  c^{\frac{p}{2}}_{\varepsilon}\|^{\frac{2(p-1)}{p}}_{L^{\frac{(p-1)[p-2(m-1)]}{p(m-1)}}(\Omega)}}
\\
\leq&\disp{\kappa_{0}(\|\nabla c^{\frac{p}{2}}_{\varepsilon}\|_{L^2(\Omega)}^{\frac{p[p-2(m-1)-1]}{[p-1][p-2(m-1)]}}\| c^{\frac{p}{2}}_{\varepsilon}\|_{L^{\frac{2}{p}}(\Omega)}^{\frac{{2(m-1)}}{(p-1)[p-2(m-1)]}}
+\|c^{\frac{p}{2}}_{\varepsilon}\|_{L^\frac{2}{p}(\Omega)})^{\frac{2(p-1)}{p}}}\\
\leq&\disp{\kappa_{1}(\|\nabla c^{\frac{p}{2}}_{\varepsilon}\|_{L^2(\Omega)}^{\frac{2[p-2(m-1)-1]}{p-2(m-1)}}+1)}.
\end{array}
$$
So that, in light of \dref{3333cz2.5114114} and the Young inequality, we derive that for all $t\in(0,T_{max,\varepsilon})$,
$$
\begin{array}{rl}
&\disp{\frac{1}{p}\frac{d}{dt}\int_{\Omega}c^{{{p}}}_{\varepsilon}+({{p}-1})\int_{\Omega}c^{{{p}-2}}_{\varepsilon}|\nabla c_{\varepsilon}|^2+\int_{\Omega}c^{{{p}}}_{\varepsilon}}
\\
\leq&\disp{\kappa_{1}\|n_{\varepsilon}+\varepsilon\|_{L^\frac{p-2(m-1)}{p-4(m-1)}(\Omega)}(\|\nabla   c^{\frac{p}{2}}_{\varepsilon}\|_{L^2(\Omega)}^{\frac{2[p-2(m-1)-1]}{p-2(m-1)}}+1)}
\\
\leq&\disp{\frac{({{p}-1})}{2}\int_{\Omega}c^{{{p}-2}}_{\varepsilon}|\nabla c_{\varepsilon}|^2
+C_1(p)\kappa_{1}^{p-2(m-1)}\|n_{\varepsilon}+\varepsilon\|_{L^\frac{p-2(m-1)}{p-4(m-1)}(\Omega)}^{p-2(m-1)}+\kappa_{1}\|n_{\varepsilon}+\varepsilon
\|_{L^\frac{p-2(m-1)}{p-4(m-1)}(\Omega)},}\\
\end{array}
$$
where we have used the fact that $\frac{p-2(m-1)-1}{p-2(m-1)}+\frac{1}{p-2(m-1)}=1.$
In view of  $p>3+4(m-1)$, again, from the Young inequality, there exist positive constants $C_3$ and $C_4$ such that
\begin{equation}
\begin{array}{rl}
&\disp{\frac{1}{p}\frac{d}{dt}\int_{\Omega}c^{{{p}}}_{\varepsilon}+\frac{({{p}-1})}{2}\int_{\Omega}c^{{{p}-2}}_{\varepsilon}|\nabla c_{\varepsilon}|^2+\int_{\Omega}c^{{{p}}}_{\varepsilon}}
\\
\leq&\disp{C_2\|n_{\varepsilon}+\varepsilon\|_{L^\frac{p-2(m-1)}{p-4(m-1)}(\Omega)}^{p-2(m-1)}+C_3~\mbox{for all}~
t\in(0,T_{max,\varepsilon}).}\\
\end{array}
\label{3333cz2.51fggtyuujkkklii14114}
\end{equation}
In the following, we will estimate the integrals on the right-hand side of \dref{3333cz2.51fggtyuujkkklii14114}.
In view of the Gagliardo-Nirenberg inequality, for some $C_4,C_5$ and $C_6> 0$ which are independent of $\varepsilon$, we may derive from \dref{bnmbncz2.5ghhjuyuivvbnnihjj} that
$$
\begin{array}{rl}
&\disp\int_{t}^{t+\tau}\left(\|n_{\varepsilon}+\varepsilon\|_{L^\frac{p-2(m-1)}{p-4(m-1)}(\Omega)}^{p-2(m-1)}+C_3\right)ds
\\
=&\disp{\int_{t}^{t+\tau}\left(\|  (n_{\varepsilon}+\varepsilon)^{m-1}\|^{\frac{p-2(m-1)}{m-1}}_{L^{\frac{p-2(m-1)}{[p-4(m-1)](m-1)}}(\Omega)}+C_3\right)ds}
\\
\leq&\disp{C_{4}\int_{t}^{t+\tau}\left(\| \nabla{ (n_{\varepsilon}+\varepsilon)^{m-1}}\|^{2}_{L^{2}(\Omega)}\|{ (n_{\varepsilon}+\varepsilon)^{m-1}}\|^{{\frac{p}{m-1}}}_{L^{\frac{1}{m-1}}(\Omega)}+
\|{ (n_{\varepsilon}+\varepsilon)^{m-1}}\|^{\frac{p-2(m-1)}{m-1}}_{L^{\frac{1}{m-1}}(\Omega)}\right)+C_3}
\\
\leq&\disp{C_{5}\int_{t}^{t+\tau}\left(\| \nabla{ (n_{\varepsilon}+\varepsilon)^{m-1}}\|^{2}_{L^{2}(\Omega)}\right)+C_3}
\\
\leq&\disp{C_{6}},
\end{array}
$$
where $\tau=\min\{1,\frac{1}{6}T_{max,\varepsilon}\}.$
Therefore, \dref{3.10gghhjuuloollgghhhyhh} holds by applying Lemma \ref{lemma630} and the H\"{o}lder inequality.
\end{proof}

Based on Lemma \ref{lemmajddggmk43025xxhjklojjkkk} and Lemma \ref{aasslemmafggg78630jklhhjj},
we can get a series of important estimates of $n_\varepsilon$ and $c_\varepsilon$.

\begin{lemma}\label{lemma4556664ddd5630223}
Let $m>1$. Then the solution of \dref{1.1fghyuisda} satisfies
\begin{equation}
\int_{\Omega}(n_\varepsilon+\varepsilon)^{m}+\int_{\Omega}|\nabla c_{\varepsilon}|^{2}
\leq C ~~~\mbox{for all}~~ t\in(0,T_{max,\varepsilon})~~\mbox{and any}~~\varepsilon>0
\label{334444zjscz2.5297x9630222ssdd2114}
\end{equation}
and
\begin{equation}\int_{t}^{t+\tau}\int_{\Omega} (n_\varepsilon+\varepsilon) ^{2m}
\leq C~~ \mbox{for all}~~ t\in(0,T_{max,\varepsilon}-\tau)~~\mbox{and any}~~\varepsilon>0,
\label{3.10gghhjuuloollsdffffffgghhhy}
\end{equation}
where $\tau=\min\{1,\frac{1}{6}T_{max,\varepsilon}\}.$
\end{lemma}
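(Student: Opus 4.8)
The plan is to derive a coupled differential inequality for the functional
$$
y_\varepsilon(t) := \int_{\Omega}(n_\varepsilon+\varepsilon)^m + \int_{\Omega}|\nabla c_\varepsilon|^2,
$$
and then to close it via Lemma \ref{lemma630}. First I would test the first equation of \dref{1.1fghyuisda} by $m(n_\varepsilon+\varepsilon)^{m-1}$, which produces the good dissipation term $\frac{m^2(m-1)}{?}\int_\Omega (n_\varepsilon+\varepsilon)^{2m-4}|\nabla n_\varepsilon|^2$ (equivalently a multiple of $\int_\Omega|\nabla(n_\varepsilon+\varepsilon)^{m-1/?}|^2$), plus a cross term $-m(m-1)\int_\Omega (n_\varepsilon+\varepsilon)^{m-1}S_\varepsilon(x,n_\varepsilon,c_\varepsilon)\nabla c_\varepsilon\cdot\nabla n_\varepsilon$ which, using \dref{x1.73142vghf48gg} ($|S_\varepsilon|\le C_S$) and Young's inequality, is dominated by a small fraction of the diffusion term plus $C\int_\Omega (n_\varepsilon+\varepsilon)^{2m-2}|\nabla c_\varepsilon|^2$; the convective term drops by $\nabla\cdot u_\varepsilon=0$. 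Second I would test the second equation by $-\Delta c_\varepsilon$: this gives $\frac12\frac{d}{dt}\int_\Omega|\nabla c_\varepsilon|^2 + \int_\Omega|\Delta c_\varepsilon|^2 + \int_\Omega|\nabla c_\varepsilon|^2 = -\int_\Omega\nabla n_\varepsilon\cdot\nabla c_\varepsilon + \int_\Omega(u_\varepsilon\cdot\nabla c_\varepsilon)\Delta c_\varepsilon$ (after integration by parts, controlling the boundary term via the Neumann condition as in the standard convex-vs-nonconvex discussion — here one must be a little careful since $\Omega$ need not be convex, so one instead works with $\|\nabla c_\varepsilon\|_{W^{1,2}}$ and the inequality $\int_\Omega|\nabla c_\varepsilon|^2\Delta c_\varepsilon\cdot\nu \le$ trace terms, or simply absorbs the boundary contribution into $\|\nabla c_\varepsilon\|_{L^2(\partial\Omega)}^2$ estimated by interpolation).

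The heart of the argument is then to control the two problematic terms $\int_\Omega (n_\varepsilon+\varepsilon)^{2m-2}|\nabla c_\varepsilon|^2$ and $\int_\Omega(u_\varepsilon\cdot\nabla c_\varepsilon)\Delta c_\varepsilon$. For the first, I would use the Gagliardo–Nirenberg inequality in dimension two to interpolate $\|\,|\nabla c_\varepsilon|\,\|_{L^4}^2$ against $\|\Delta c_\varepsilon\|_{L^2}$ and $\|\nabla c_\varepsilon\|_{L^2}$ (possible because $q>2$ bounds on $c_\varepsilon$ from Lemma \ref{aasslemmafggg78630jklhhjj} upgrade, via elliptic regularity on the $c$-equation, to an $L^2$-in-time bound on $\Delta c_\varepsilon$ or at least on $\|\nabla c_\varepsilon\|_{L^4}$), together with $\|(n_\varepsilon+\varepsilon)^{2m-2}\|$ controlled by $\|(n_\varepsilon+\varepsilon)^m\|$ via another Gagliardo–Nirenberg step using the gradient dissipation $\|\nabla(n_\varepsilon+\varepsilon)^{m-1}\|_{L^2}$ from the first testing. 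The point is that all the resulting "bad" powers of $y_\varepsilon$ appear with exponent $<1$ once multiplied by the space-time integrable quantities in \dref{bnmbncz2.5ghhjuyuivvbnnihjj} and \dref{3.10gghhjuuloollgghhhyhh}, so Young's inequality turns them into $\frac12$ of the dissipation plus a locally integrable forcing $h_\varepsilon(t)$ with $\int_t^{t+\tau}h_\varepsilon\le C$. The fluid term is handled by writing $\int_\Omega(u_\varepsilon\cdot\nabla c_\varepsilon)\Delta c_\varepsilon \le \|u_\varepsilon\|_{L^4}\|\nabla c_\varepsilon\|_{L^4}\|\Delta c_\varepsilon\|_{L^2}$, using $\|u_\varepsilon\|_{L^4}^2 \lesssim \|\nabla u_\varepsilon\|_{L^2}\|u_\varepsilon\|_{L^2} + \|u_\varepsilon\|_{L^2}^2$ (again 2D GN plus Poincaré, with $\|\nabla u_\varepsilon\|_{L^2}$ space-time integrable and $\|u_\varepsilon\|_{L^2}$ bounded by \dref{czfvgb2.5ghhjuyuccvviihjj}), and absorbing $\|\Delta c_\varepsilon\|_{L^2}^2$ into the dissipation.

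Having produced $y_\varepsilon'(t) + c_1 y_\varepsilon(t) + c_2\int_\Omega(n_\varepsilon+\varepsilon)^{2m-4}|\nabla n_\varepsilon|^2 + c_2\int_\Omega|\Delta c_\varepsilon|^2 \le h_\varepsilon(t)$ with $\int_t^{t+\tau}h_\varepsilon\le C$ uniformly in $\varepsilon$, I would invoke Lemma \ref{lemma630} to conclude the time-uniform bound \dref{334444zjscz2.5297x9630222ssdd2114}. Then \dref{3.10gghhjuuloollsdffffffgghhhy} follows by integrating the differential inequality over $(t,t+\tau)$: the term $\int_t^{t+\tau}\int_\Omega(n_\varepsilon+\varepsilon)^{2m-4}|\nabla n_\varepsilon|^2$ is controlled, hence so is $\int_t^{t+\tau}\|\nabla(n_\varepsilon+\varepsilon)^{m-1}\|_{L^2}^2$, and a final 2D Gagliardo–Nirenberg estimate $\|(n_\varepsilon+\varepsilon)^{m-1}\|_{L^{2m/(m-1)}}^{2m/(m-1)} \lesssim \|\nabla(n_\varepsilon+\varepsilon)^{m-1}\|_{L^2}^2\|(n_\varepsilon+\varepsilon)^{m-1}\|_{L^{1/(m-1)}}^{?} + \|(n_\varepsilon+\varepsilon)^{m-1}\|_{L^{1/(m-1)}}^{2m/(m-1)}$, combined with the mass bound \dref{ddfgczhhhh2.5ghju48cfg924ghyuji}, gives $\int_t^{t+\tau}\int_\Omega(n_\varepsilon+\varepsilon)^{2m}\le C$. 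The main obstacle, I expect, is the bookkeeping of the Gagliardo–Nirenberg exponents so that every "bad" term genuinely carries a subcritical power of $y_\varepsilon$ (this is where $m>1$ is used in an essential, and in view of Remark sharp, way), together with the careful treatment of the boundary integral from testing the $c$-equation by $-\Delta c_\varepsilon$ on a possibly non-convex domain; handling that boundary term without convexity — via a trace/interpolation inequality that trades $\|\nabla c_\varepsilon\|_{L^2(\partial\Omega)}$ against $\|\Delta c_\varepsilon\|_{L^2(\Omega)}$ and $\|\nabla c_\varepsilon\|_{L^2(\Omega)}$ — is the technically delicate point that distinguishes this result from the earlier convex-domain ones.
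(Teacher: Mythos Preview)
Your overall strategy matches the paper's --- test the $n_\varepsilon$-equation against $(n_\varepsilon+\varepsilon)^{m-1}$, test the $c_\varepsilon$-equation against $-\Delta c_\varepsilon$, and combine --- but three points need correction.

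\emph{First, the Young split.} The diffusion term produced by the first testing is $(m-1)\int_\Omega(n_\varepsilon+\varepsilon)^{2m-3}|\nabla n_\varepsilon|^2$ (exponent $2m-3$, not $2m-4$), and matching this in the cross term $C_S(m-1)\int_\Omega(n_\varepsilon+\varepsilon)^{m-1}|\nabla n_\varepsilon||\nabla c_\varepsilon|$ leaves the remainder $\tfrac{(m-1)C_S^2}{2}\int_\Omega(n_\varepsilon+\varepsilon)\,|\nabla c_\varepsilon|^2$, with exponent $1$, not $2m-2$. This matters: the paper then applies Young once more to get $\varepsilon_1\int_\Omega(n_\varepsilon+\varepsilon)^{2m} + C\int_\Omega|\nabla c_\varepsilon|^{4m/(2m-1)}$, and it is precisely the $L^{l_0}$ bound on $c_\varepsilon$ from Lemma~\ref{aasslemmafggg78630jklhhjj} (with $l_0>\frac{1}{m-1}$) that makes the Gagliardo--Nirenberg exponent for $\int_\Omega|\nabla c_\varepsilon|^{4m/(2m-1)}$ strictly subcritical relative to $\|\Delta c_\varepsilon\|_{L^2}^2$. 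Your exponent $2m-2$ would make this step harder, and your description of how Lemma~\ref{aasslemmafggg78630jklhhjj} enters is too vague.

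\emph{Second, there is no boundary term at this stage.} Testing by $-\Delta c_\varepsilon$ and integrating by parts uses only $\partial_\nu c_\varepsilon=0$; convexity plays no role here. (The trace/interpolation argument you sketch is what the paper needs in Lemma~\ref{lemma4563025xxhjkloghyui}, where the test function $|\nabla c_\varepsilon|^{2m-2}\nabla c_\varepsilon$ produces $\int_{\partial\Omega}|\nabla c_\varepsilon|^{2m-2}\partial_\nu|\nabla c_\varepsilon|^2$.) In the present lemma the paper simply integrates by parts once more, using $\nabla\cdot u_\varepsilon=0$, to obtain $-\int_\Omega\nabla c_\varepsilon\cdot(\nabla u_\varepsilon\cdot\nabla c_\varepsilon)$.

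\emph{Third, the closing ODE.} The fluid coupling does not reduce to a pure forcing: after Gagliardo--Nirenberg ($\|\nabla c_\varepsilon\|_{L^4}^2\le C\|\Delta c_\varepsilon\|_{L^2}\|\nabla c_\varepsilon\|_{L^2}$) and Young one is left with $C\|\nabla u_\varepsilon\|_{L^2}^2\|\nabla c_\varepsilon\|_{L^2}^2$, i.e.\ a term of the form $\rho(t)\,y(t)$ with $\rho(t)=C\|\nabla u_\varepsilon(\cdot,t)\|_{L^2}^2$ only known to satisfy $\int_t^{t+\tau}\rho\le C$ by \dref{bnmbncz2.5ghhjuyuivvbnnihjj}. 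Hence Lemma~\ref{lemma630} does not apply as you state it; the paper instead runs a Gr\"onwall argument, first choosing (via the time-integrated bounds of Lemma~\ref{lemmajddggmk43025xxhjklojjkkk}) some $t_0\in[t-\tau,t)$ with $y(t_0)\le C$, then exponentiating $\int_{t_0}^t\rho$. The second estimate \dref{3.10gghhjuuloollsdffffffgghhhy} then follows by integrating the differential inequality over $(t,t+\tau)$, because $\int_\Omega(n_\varepsilon+\varepsilon)^{2m}$ already appears as a nonnegative term on the left-hand side --- no extra Gagliardo--Nirenberg step is needed for it.
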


\begin{proof}
Multiplying the first equation of $\dref{1.1fghyuisda}$ by ${(n_{\varepsilon}+\varepsilon)^{m-1}}$, integrating the product in $\Omega$,
and noticing $\nabla\cdot u_\varepsilon=0$, one obtains
$$
\begin{array}{rl}
&\disp{\frac{1}{{m}}\frac{d}{dt}\|n_\varepsilon+\varepsilon\|^{{m}}_{L^{{m}}(\Omega)}
+({m-1})\int_{\Omega} (n_\varepsilon+\varepsilon)^{{{2{m-3}}}}|\nabla n_\varepsilon|^2}
\\
=&\disp{-\int_\Omega  (n_{\varepsilon}+\varepsilon)^{m-1}\nabla\cdot(n_\varepsilon S_\varepsilon(x, n_{\varepsilon}, c_{\varepsilon})
\nabla c_\varepsilon) }
\\
=&\disp{ ({m-1}) \int_\Omega   (n_{\varepsilon}+\varepsilon)  ^{{m-2}} n_\varepsilon S_\varepsilon(x, n_{\varepsilon}, c_{\varepsilon})
\nabla n_\varepsilon\cdot\nabla c_\varepsilon}
\\
\leq&\disp{ C_S({m-1}) \int_\Omega   (n_{\varepsilon}+\varepsilon)  ^{m-1}
|\nabla n_\varepsilon||\nabla c_\varepsilon|~~\mbox{for all}~~ t\in(0,T_{max,\varepsilon})}
\end{array}
$$
by using \dref{x1.73142vghf48gg}. Then, by using the Young inequality, we have
\begin{equation}
\begin{array}{rl}
&\disp{\frac{1}{{m}}\frac{d}{dt}\|n_\varepsilon +\varepsilon \|^{{{m}}}_{L^{{m}}(\Omega)}
+({m-1})\int_{\Omega} (n_{\varepsilon}+\varepsilon)^{{{{2m-3}}}}|\nabla n_\varepsilon|^2}
\\
\leq&\disp{\frac{{m-1}}{2}\int_{\Omega} (n_{\varepsilon}+\varepsilon)^{{{{2m-3}}}}|\nabla n_\varepsilon|^2
+\frac{(m-1)C_S^2}{2}\int_\Omega (n_{\varepsilon}+\varepsilon)
|\nabla c_\varepsilon|^2~~\mbox{for all}~~ t\in(0,T_{max,\varepsilon}).}
\end{array}
\label{3333cz2.5kkssss1214114114}
\end{equation}
On the other hand,  in view of Lemma \ref{lemmajddggmk43025xxhjklojjkkk} and invoking the Gagliardo--Nirenberg inequality,
we infer with some $\gamma_{0}> 0$ and $\gamma_{1} > 0$ that
$$
\begin{array}{rl}
&\disp\int_\Omega (n_\varepsilon+\varepsilon)  ^{2{m}}
\\
=& \| (n_\varepsilon+\varepsilon) ^{\frac{2{m}-1}{2}}\|_{L^{\frac{4m}{2m-1}}(\Omega)}^{\frac{4m}{2m-1}}
\\
\leq& \gamma_{0}(\|\nabla (n_\varepsilon+\varepsilon)^{\frac{2{m}-1}{2}}
\|_{L^{2}(\Omega)}^{\frac{2{m}-1}{2m}} \|(n_\varepsilon+\varepsilon)^{\frac{2{m}-1}{2}}\|_{L^{\frac{2}{2{m}-1}}(\Omega)}^{\frac{1}{2m}}
+\| (n_\varepsilon+\varepsilon) ^{\frac{2{m}-1}{2}}\|_{L^{\frac{2}{2{m}-1}}(\Omega)})^{\frac{4m}{2m-1}}
\\
\leq&\gamma_{1}\| \nabla (n_\varepsilon+\varepsilon)  ^{\frac{2{m}-1}{2}}\|_{L^{2}(\Omega)}^{2}+\gamma_{1}.
\end{array}
$$
We then achieve, with the help of the above inequality, that
\begin{equation}\label{3333cz2.5kke345ddfff677ddff89001214114114rrggjjkk}
\begin{array}{rl}
&m{(m-1)}\disp\int_{\Omega} (n_\varepsilon+\varepsilon)  ^{{{{2m-3}}}}|\nabla n_\varepsilon|^2
\\
=&\disp\frac{4m{(m-1)}}{(2{m}-1)^2} \|\nabla (n_\varepsilon+\varepsilon)^{\frac{2{m}-1}{2}}\|_{L^{2}(\Omega)}^{2}
\\
\geq& \frac{1}{\gamma_{1}}\frac{4m{(m-1)}}{(2{m}-1)^2}(\disp\int_\Omega (n_\varepsilon+\varepsilon)  ^{2{m}}-1).
\end{array}
\end{equation}
Here, the Young inequality allows to be written as
$$
\begin{array}{rl}
&\disp{\frac{(m-1)C_S^2}{2}\int_\Omega (n_{\varepsilon}+\varepsilon)|\nabla c_\varepsilon|^2}
\\
 \leq&\disp{\varepsilon_1\int_\Omega (n_\varepsilon+\varepsilon)^{2m}+C_1(\varepsilon_1)\int_\Omega  |\nabla c_\varepsilon|^{\frac{4m}{2m-1}} ,}
\end{array}
$$
where
\begin{equation}\varepsilon_1=\frac{1}{\gamma_{1}}\frac{{m-1}}{(2{m}-1)^2}
\label{3333cz2hjjjj.563ss011228ddff}
\end{equation}
and
$$
C_1(\varepsilon_1)=\frac{2{m}-1}{2{m}}\left(\varepsilon_12m\right)^{-\frac{1}{2{m}-1} }\left(\frac{(m-1)C_S^2}{2}\right)^{\frac{2{m}}{2{m}-1} }.
$$
In light of \dref{3.10gghhjuuloollgghhhyhh}, there exist positive constants $l_0>\frac{1}{m-1}$ and $C_2$, such that
\begin{equation}\|c_\varepsilon(\cdot, t)\|_{L^{l_0}(\Omega)}\leq C_2~~ \mbox{for all}~~ t\in(0,T_{max,\varepsilon}).
\label{3.10gghhjukklllkklllokkllffghhjjoppuloollgghhhyhh}
\end{equation}
Next, with the help of the Gagliardo--Nirenberg inequality and \dref{3.10gghhjukklllkklllokkllffghhjjoppuloollgghhhyhh}, we derive that
$$
\begin{array}{rl}
&\disp{C_1(\varepsilon_1)\int_\Omega  |\nabla c_\varepsilon|^{\frac{4m}{2m-1}}}
\\
\leq&\disp{C_3\|\Delta c_\varepsilon\|_{L^{2}(\Omega)}^{a\frac{4m}{2m-1}}\| c_\varepsilon\|_{L^{l_0}(\Omega)}^{(1-a)\frac{4m}{2m-1}}
+C_3\| c_\varepsilon\|_{L^{l_0}(\Omega)}^{\frac{4m}{2m-1}}}
\\
\leq&\disp{C_{4}\|\Delta c_\varepsilon\|_{L^{2}(\Omega)}^{a\frac{4m}{2m-1}}+C_{4}}
\end{array}
$$
with some positive constants $C_3$ and $C_{4}$, where
$$a=\frac{\frac{1}{2}+\frac{1}{l_0}-\frac{2{m}-1}{4m}}{\frac{1}{2}+\frac{1}{l_0}}\in(0,1).$$
This, together with the Young inequality and $a\frac{4m}{2m-1}<2$ (due to $l_0>\frac{1}{m-1}$), yields
\begin{equation}
\label{ssdd3333cz2.5kkett677ddff734567789999001214114114rrggjjkk}
\disp{C_1(\varepsilon_1)\int_\Omega  |\nabla c_\varepsilon|^{\frac{4m}{2m-1}}\leq\frac{1}{4}\|\Delta c_\varepsilon\|_{L^{2}(\Omega)}^{2}+C_{5}.}
\end{equation}
Taking $-\Delta{c_{\varepsilon}}$ as the test function for the second  equation of \dref{1.1fghyuisda}, and using the Young inequality,
it yields that for all $t\in(0,T_{max,\varepsilon})$
\begin{equation}
\begin{array}{rl}
&\disp\frac{1}{{2}}\disp\frac{d}{dt}\|\nabla{c_{\varepsilon}}\|^{{{2}}}_{L^{{2}}(\Omega)}+
\int_{\Omega} |\Delta c_{\varepsilon}|^2+ \int_{\Omega} | \nabla c_{\varepsilon}|^2
\\
=&\disp{-\int_{\Omega} n_{\varepsilon}\Delta c_{\varepsilon}+\int_{\Omega} (u_{\varepsilon}\cdot\nabla c_{\varepsilon})\Delta c_{\varepsilon}}
\\
=&\disp{-\int_{\Omega} n_{\varepsilon}\Delta c_{\varepsilon}-\int_{\Omega}\nabla c_{\varepsilon}\nabla (u_{\varepsilon}\cdot\nabla c_{\varepsilon})}
\\
=&\disp{-\int_{\Omega} n_{\varepsilon}\Delta c_{\varepsilon}-\int_{\Omega}\nabla c_{\varepsilon}\nabla (\nabla u_{\varepsilon}\cdot\nabla c_{\varepsilon}),}
\end{array}
\label{hhxxcsssdfvvjjczddfdddfff2.5}
\end{equation}
where we have used the fact that
$$
\disp{\int_{\Omega}\nabla c_{\varepsilon}\cdot(D^2 c_{\varepsilon}\cdot u_{\varepsilon})
=\frac{1}{2}\int_{\Omega}  u_{\varepsilon}\cdot\nabla|\nabla c_{\varepsilon}|^2=0
~~\mbox{for all}~~ t\in(0,T_{max,\varepsilon}).}
$$
Meanwhile, we can further use Gagliardo-Nirenberg inequality and the elliptic regularity (\cite{Gilbarg4441215}) to conclude that for some $C_{6}> 0$,
$$
\disp \|\nabla c_{\varepsilon}\|_{L^{4}(\Omega)}^2\leq\disp{C_{6}\|\Delta c_{\varepsilon}\|_{L^{2}(\Omega)}\|\nabla c_{\varepsilon}\|_{L^{2}(\Omega)}
~~\mbox{for all}~~ t\in(0,T_{max,\varepsilon}).}
$$
This, together with the Cauchy-Schwarz inequality and the  Young  inequality, yields
\begin{equation}
\begin{array}{rl}
&\disp-\int_{\Omega}\nabla c_{\varepsilon}\nabla (\nabla u_{\varepsilon}\cdot\nabla c_{\varepsilon})
\\
\leq&\disp{\|\nabla u_{\varepsilon}\|_{L^{2}(\Omega)}\|\nabla c_{\varepsilon}\|_{L^{4}(\Omega)}^2}
\\
\leq&\disp{C_{6}\|\nabla u_{\varepsilon}\|_{L^{2}(\Omega)}\|\Delta c_{\varepsilon}\|_{L^{2}(\Omega)}\|\nabla c_{\varepsilon}\|_{L^{2}(\Omega)}}
\\
\leq&\disp{C_{6}^2\|\nabla u_{\varepsilon}\|_{L^{2}(\Omega)}^2\|\nabla c_{\varepsilon}\|_{L^{2}(\Omega)}^2
+\frac{1}{4}\|\Delta c_{\varepsilon}\|_{L^{2}(\Omega)}^2~~\mbox{for all}~~ t\in(0,T_{max,\varepsilon}).}
\end{array}
\label{hhxxcsssdfvvjjcddffzddfdddfff2.5}
\end{equation}
Applying the Cauchy-Schwarz inequality, one obtain
\begin{equation}
\begin{array}{rl}
\disp-\int_{\Omega} n_{\varepsilon}\Delta c_{\varepsilon}\leq&\disp{\frac{1}{4}
\int_{\Omega}|\Delta c_{\varepsilon}|^2+\int_{\Omega}n_{\varepsilon}^2~~\mbox{for all}~~ t\in(0,T_{max,\varepsilon})}.
\end{array}
\label{hhxxcsssdfvvjjddddczddfdddfff2.5}
\end{equation}
From \dref{hhxxcsssdfvvjjczddfdddfff2.5} and \dref{hhxxcsssdfvvjjcddffzddfdddfff2.5}, we thus infer that
\begin{equation}
\begin{array}{rl}
\disp\disp\frac{d}{dt}\|\nabla{c_{\varepsilon}}\|^{{{2}}}_{L^{{2}}(\Omega)}+
\int_{\Omega} |\Delta c_{\varepsilon}|^2+ 2\int_{\Omega} | \nabla c_{\varepsilon}|^2\leq&\disp{2\int_{\Omega}n_{\varepsilon}^2+
2C_{6}^2\|\nabla u_{\varepsilon}\|_{L^{2}(\Omega)}^2\|\nabla c_{\varepsilon}\|_{L^{2}(\Omega)}^2.}
\end{array}
\label{hhxxcsssdfvvsssjjczddfdddfff2.5}
\end{equation}
Collecting \dref{3333cz2.5kkssss1214114114}, \dref{ssdd3333cz2.5kkett677ddff734567789999001214114114rrggjjkk}--\dref{hhxxcsssdfvvsssjjczddfdddfff2.5}, we derive that for all $t\in(0,T_{max,\varepsilon})$,
$$
\begin{array}{rl}
&\disp{\frac{d}{dt}(\|n_\varepsilon+\varepsilon  \|^{{{m}}}_{L^{{m}}(\Omega)}+
\disp\|\nabla{c_{\varepsilon}}\|^{{{2}}}_{L^{{2}}(\Omega)})+
{m(m-1)}\int_{\Omega} (n_\varepsilon+\varepsilon)  ^{{{{2m-3}}}}|\nabla n_\varepsilon|^2}
\\
&+\disp{\frac{1}{{2}}
\int_{\Omega} |\Delta c_{\varepsilon}|^2+2 \int_{\Omega} | \nabla c_{\varepsilon}|^2}\\
\leq&\disp{m\varepsilon_1\int_\Omega   (n_\varepsilon+\varepsilon)  ^{2{m}}+2\int_{\Omega}n_{\varepsilon}^2+
2C_{6}^2\|\nabla u_{\varepsilon}\|_{L^{2}(\Omega)}^2\|\nabla c_{\varepsilon}\|_{L^{2}(\Omega)}^2+C_{7},}\\
\leq&\disp{m\varepsilon_1\int_\Omega   (n_\varepsilon+\varepsilon)  ^{2{m}}+2\int_{\Omega}(n_\varepsilon+\varepsilon)^2+
2C_{6}^2\|\nabla u_{\varepsilon}\|_{L^{2}(\Omega)}^2\|\nabla c_{\varepsilon}\|_{L^{2}(\Omega)}^2+C_{7}.}\\
\end{array}
$$
Moreover, it follows from the Young inequality and $m>1$, that
\begin{equation}
\begin{array}{rl}
&\disp{\frac{d}{dt}(\|n_\varepsilon +\varepsilon \|^{{{m}}}_{L^{{m}}(\Omega)}+
\disp\|\nabla{c_{\varepsilon}}\|^{{{2}}}_{L^{{2}}(\Omega)})+
{m(m-1)}\int_{\Omega} (n_\varepsilon+\varepsilon)  ^{{{{2m-3}}}}|\nabla n_\varepsilon|^2}
\\
&+\disp{\frac{1}{{2}}
\int_{\Omega} |\Delta c_{\varepsilon}|^2+2 \int_{\Omega} | \nabla c_{\varepsilon}|^2}\\
\leq&\disp{2m\varepsilon_1\int_\Omega   (n_\varepsilon+\varepsilon)  ^{2{m}}+
2C_{6}^2\|\nabla u_{\varepsilon}\|_{L^{2}(\Omega)}^2\|\nabla c_{\varepsilon}\|_{L^{2}(\Omega)}^2+C_{8}~~\mbox{for all}~~ t\in(0,T_{max,\varepsilon})}.
\end{array}
\label{3333cz2.5kksssssss121hhjjj4114114}
\end{equation}
By substituting \dref{3333cz2.5kke345ddfff677ddff89001214114114rrggjjkk} into \dref{3333cz2.5kksssssss121hhjjj4114114} and using \dref{3333cz2hjjjj.563ss011228ddff}, we find that
$$
\begin{array}{rl}
&\disp{\frac{d}{dt}(\|n_\varepsilon+\varepsilon  \|^{{{m}}}_{L^{{m}}(\Omega)}+
\|\nabla{c_{\varepsilon}}\|^{{{2}}}_{L^{{2}}(\Omega)})+
(\frac{1}{\gamma_{1}}\frac{4m{(m-1)}}{(2{m}-1)^2}-2{m}\varepsilon_1)\int_\Omega   (n_\varepsilon+\varepsilon)  ^{2{m}}}
\\
&+\disp{\frac{1}{{2}}
\int_{\Omega} |\Delta c_{\varepsilon}|^2+ 2\int_{\Omega} | \nabla c_{\varepsilon}|^2}\\
=&\disp{\frac{d}{dt}(\|n_\varepsilon+\varepsilon  \|^{{{m}}}_{L^{{m}}(\Omega)}+
\|\nabla{c_{\varepsilon}}\|^{{{2}}}_{L^{{2}}(\Omega)})+
\frac{1}{\gamma_{1}}\frac{2{m}{(m-1)}}{(2{m}-1)^2}\int_\Omega   (n_\varepsilon+\varepsilon)  ^{2{m}}}
\\
&+\disp{
\frac{1}{{2}}
\int_{\Omega} |\Delta c_{\varepsilon}|^2+ 2\int_{\Omega} | \nabla c_{\varepsilon}|^2}\\
\leq&\disp{
2C_{6}^2\|\nabla u_{\varepsilon}\|_{L^{2}(\Omega)}^2\|\nabla c_{\varepsilon}\|_{L^{2}(\Omega)}^2+C_{9}~~\mbox{for all}~~ t\in(0,T_{max,\varepsilon})}.
\end{array}
$$
Therefore, we derive from the Young inequality that
\begin{equation}
\begin{array}{rl}
&\disp{\frac{d}{dt}(\|n_\varepsilon+\varepsilon  \|^{{{m}}}_{L^{{m}}(\Omega)}+
\|\nabla{c_{\varepsilon}}\|^{{{2}}}_{L^{{2}}(\Omega)})+
2\int_\Omega   n_\varepsilon  ^{{{m}}}+ 2\int_{\Omega} | \nabla c_{\varepsilon}|^2+\frac{1}{\gamma_{1}}\frac{{m}({m-1})}{(2{m}-1)^2}\int_\Omega   (n_\varepsilon+\varepsilon)  ^{2{m}}}\\
\leq&\disp{
2C_{6}^2\|\nabla u_{\varepsilon}\|_{L^{2}(\Omega)}^2\|\nabla c_{\varepsilon}\|_{L^{2}(\Omega)}^2+C_{10}}\\
 \leq&\disp{
2C_{6}^2\|\nabla u_{\varepsilon}\|_{L^{2}(\Omega)}^2(\|\nabla c_{\varepsilon}\|_{L^{2}(\Omega)}^2+
\|n_\varepsilon+\varepsilon  \|^{{{m}}}_{L^{{m}}(\Omega)})+C_{10}~~\mbox{for all}~~ t\in(0,T_{max,\varepsilon}),}
\end{array}
\label{3333cz2.5kksssssss121hhjjj4sddfffdfff114114}
\end{equation}
where we have used the fact that $2\int_\Omega n_\varepsilon^{{{m}}}\leq \frac{1}{\gamma_{1}}\frac{{m}({m-1})}{(2{m}-1)^2}\int_\Omega (n_\varepsilon+\varepsilon)^{2{m}}+C_{10}$,
$m>1$ and the Young inequality.
Now, again, from the Gagliardo--Nirenberg inequality, \dref{bnmbncz2.5ghhjuyuivvbnnihjj}, and Lemma \ref{lemmajddggmk43025xxhjklojjkkk},
there exist constants $\gamma_{3}> 0$ and $\gamma_{4} > 0$, such that
\begin{equation}
\label{3333cz2.5kke345677ddff89001ddff214114114rrggjjkk}
\begin{array}{rl}&\disp\int_{t}^{t+\tau}\int_\Omega   (n_\varepsilon+\varepsilon)  ^{{m}}
\\
=& \int_{t}^{t+\tau}\|  (n_\varepsilon+\varepsilon)  ^{m-1}\|_{L^{\frac{m}({m-1})}(\Omega)}^{\frac{m}({m-1})}
\\
\leq& \gamma_{3}(\int_{t}^{t+\tau}\| \nabla (n_\varepsilon+\varepsilon) ^{m-1}\|_{L^{2}(\Omega)}^{\frac{{m-1}}{m}} \| (n_\varepsilon+\varepsilon) ^{m-1}\|_{L^{\frac{1}{m-1}}(\Omega)}^{\frac{1}{m}}
+\int_{t}^{t+\tau}\| (n_\varepsilon+\varepsilon)  ^{m-1}\|_{L^{\frac{1}{m-1}}(\Omega)})^{\frac{2{m}}{m-1}}\\
\leq& \gamma_{4}\int_{t}^{t+\tau}\| \nabla (n_\varepsilon+\varepsilon)  ^{m-1}\|_{L^{2}(\Omega)}^{2}+\gamma_{4}~~\mbox{for all}~~ t\in(0,T_{max,\varepsilon}-\tau),
\end{array}
\end{equation}
where $\tau=\min\{1,\frac{1}{6}T_{max,\varepsilon}\}.$
Therefore, by \dref{3333cz2.5kke345677ddff89001ddff214114114rrggjjkk}, we conclude that
\begin{equation}
\label{3333cz2.kkk5kke345677ddfdddddf89001ddff214114114rrggjjkk}
\disp\int_{t}^{t+\tau}\int_\Omega   (n_\varepsilon+\varepsilon)  ^{{{m}}}\leq\gamma_{5}~~\mbox{for all}~~ t\in(0,T_{max,\varepsilon}-\tau).
\end{equation}
Thus, for $t\in(0,T_{max,\varepsilon})$, if we write
$$
y(t) :=\|n_\varepsilon(\cdot, t)+\varepsilon\|^{{{m}}}_{L^{{m}}(\Omega)}+
\|\nabla{c_{\varepsilon}}(\cdot, t)\|^{{{2}}}_{L^{{2}}(\Omega)}
$$
and
$$
\rho(t) =2C_{6}^2\int_{\Omega}|\nabla u_{\varepsilon}(\cdot, t)|^2,
$$
\dref{3333cz2.5kksssssss121hhjjj4sddfffdfff114114} implies that
\begin{equation}
y'(t)+h(t)
\leq\disp{ \rho(t)y(t)+C_{11}~\mbox{for all}~t\in(0,T_{max,\varepsilon}),}
\label{ddfghgfhggddhjjjnkkll11cz2.5ghju48}
\end{equation}
where
$$
h(t)=\frac{1}{\gamma_{1}}\frac{{m}({m-1})}{(2{m}-1)^2}\int_\Omega  (n_\varepsilon+\varepsilon)^{2{m}}(\cdot,t)\geq0.
$$
Next, by using estimates \dref{3333cz2.kkk5kke345677ddfdddddf89001ddff214114114rrggjjkk} and  \dref{bnmbncz2.5ghhjuyuivvbnnihjj}, one obtains
$$
\int_{t}^{t+\tau}\rho(s)ds\leq\disp{ C_{12}}
$$
and
$$
\int_{t}^{t+\tau}y(s)ds\leq\disp{ C_{13}},
$$
for all $t\in(0,T_{max,\varepsilon}-\tau)$.
For given $t\in (0, T_{max,\varepsilon})$,  using estimates \dref{3333cz2.kkk5kke345677ddfdddddf89001ddff214114114rrggjjkk} and \dref{bnmbncz2.5ghhjuyuivvbnnihjj} again,
one can choose $t_0 \geq 0$ such that $t_0\in [t-\tau, t)$ and
$$
\disp{ y(\cdot,t_0)\leq C_{14}.}
$$
This, together with \dref{ddfghgfhggddhjjjnkkll11cz2.5ghju48} and the Gronwall lemma, yields
\begin{equation}
\begin{array}{rl}
y(t)\leq&\disp{y(t_0)e^{\int_{t_0}^t\rho(s)ds}+\int_{t_0}^te^{\int_{s}^t\rho(\tau)d\tau}C_{11}ds}
\\
\leq&\disp{C_{14}e^{C_{12}}+\int_{t_0}^te^{C_{12}}C_{11}ds}
\\
\leq&\disp{C_{14}e^{C_{12}}+e^{C_{12}}C_{11}~~\mbox{for all}~~t\in(0,T_{max,\varepsilon}).}
\end{array}
\label{czfvgb2.5ghhddffggjuyghjddddffjjuffghhhddfghhccvjkkklllhhjkkviihjj}
\end{equation}
Finally, collecting \dref{ddfghgfhggddhjjjnkkll11cz2.5ghju48} and \dref{czfvgb2.5ghhddffggjuyghjddddffjjuffghhhddfghhccvjkkklllhhjkkviihjj},
it yields \dref{334444zjscz2.5297x9630222ssdd2114} and \dref{3.10gghhjuuloollsdffffffgghhhy}.
\end{proof}

\begin{lemma}\label{lemma630jklhhjj}
Let $m>1.$ There exists a positive constant $C$ independent of $\varepsilon$, such that
\begin{equation}
\int_{\Omega}{|\nabla u_{\varepsilon}(\cdot,t)|^2}\leq C~~\mbox{for all}~~ t\in(0, T_{max,\varepsilon}).
\label{ddxcvbbggddfgcz2vv.5ghju48cfg924ghyuji}
\end{equation}
\end{lemma}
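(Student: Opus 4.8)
\emph{Proof strategy.} The plan is to run the standard $H^{1}$--energy estimate for the fluid part of \eqref{1.1fghyuisda}, testing the third equation with $Au_{\varepsilon}$, and then to close the resulting differential inequality by the same ``uniform Gr\"onwall'' bootstrap that was used at the end of the proof of Lemma~\ref{lemma4556664ddd5630223}. The only genuinely new ingredient will be an $\varepsilon$-independent bound for the convective term $\kappa(Y_{\varepsilon}u_{\varepsilon}\cdot\nabla)u_{\varepsilon}$.

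First I would test the third equation of \eqref{1.1fghyuisda} against $Au_{\varepsilon}$. Since $u_{\varepsilon}(\cdot,t)\in D(A)$ for $t>0$, since $Au_{\varepsilon}\in L^{2}_{\sigma}(\Omega)$ is orthogonal to $\nabla P_{\varepsilon}$ in $L^{2}(\Omega)$, and since $\|A^{\frac12}u_{\varepsilon}\|_{L^{2}(\Omega)}=\|\nabla u_{\varepsilon}\|_{L^{2}(\Omega)}$, this yields
$$
\frac12\frac{d}{dt}\int_{\Omega}|\nabla u_{\varepsilon}|^{2}+\int_{\Omega}|Au_{\varepsilon}|^{2}
=-\kappa\int_{\Omega}(Y_{\varepsilon}u_{\varepsilon}\cdot\nabla)u_{\varepsilon}\cdot Au_{\varepsilon}+\int_{\Omega}n_{\varepsilon}\nabla\phi\cdot Au_{\varepsilon}.
$$
The gravitational term is harmless: by \eqref{dd1.1fghyuisdakkkllljjjkk}, Young's inequality, H\"older's inequality and \eqref{3.10gghhjuuloollsdffffffgghhhy} (using $2m>2$), it is bounded by $\tfrac18\int_{\Omega}|Au_{\varepsilon}|^{2}+C\|n_{\varepsilon}\|_{L^{2}(\Omega)}^{2}$, with $\int_{t}^{t+\tau}\|n_{\varepsilon}(\cdot,s)\|_{L^{2}(\Omega)}^{2}\,ds\le C$ for all $t\in(0,T_{max,\varepsilon}-\tau)$.

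For the convective term I would first record that $Y_{\varepsilon}=(1+\varepsilon A)^{-1}$ commutes with $A^{\frac12}$ and is a contraction on $L^{2}_{\sigma}(\Omega)$, so $\|\nabla Y_{\varepsilon}u_{\varepsilon}\|_{L^{2}(\Omega)}\le\|\nabla u_{\varepsilon}\|_{L^{2}(\Omega)}$ and $\|Y_{\varepsilon}u_{\varepsilon}\|_{L^{2}(\Omega)}\le\|u_{\varepsilon}\|_{L^{2}(\Omega)}\le C$ by \eqref{czfvgb2.5ghhjuyuccvviihjj}. Writing $\int_{\Omega}(Y_{\varepsilon}u_{\varepsilon}\cdot\nabla)u_{\varepsilon}\cdot Au_{\varepsilon}\le\|Y_{\varepsilon}u_{\varepsilon}\|_{L^{4}(\Omega)}\|\nabla u_{\varepsilon}\|_{L^{4}(\Omega)}\|Au_{\varepsilon}\|_{L^{2}(\Omega)}$ and combining the two-dimensional inequalities $\|v\|_{L^{4}(\Omega)}^{2}\le C\|\nabla v\|_{L^{2}(\Omega)}\|v\|_{L^{2}(\Omega)}+C\|v\|_{L^{2}(\Omega)}^{2}$ and $\|\nabla u_{\varepsilon}\|_{L^{4}(\Omega)}\le C\|u_{\varepsilon}\|_{W^{2,2}(\Omega)}^{\frac12}\|\nabla u_{\varepsilon}\|_{L^{2}(\Omega)}^{\frac12}$ with the Stokes elliptic estimate $\|u_{\varepsilon}\|_{W^{2,2}(\Omega)}\le C(\|Au_{\varepsilon}\|_{L^{2}(\Omega)}+\|u_{\varepsilon}\|_{L^{2}(\Omega)})$, several applications of Young's inequality should give
$$
\Big|\kappa\int_{\Omega}(Y_{\varepsilon}u_{\varepsilon}\cdot\nabla)u_{\varepsilon}\cdot Au_{\varepsilon}\Big|
\le\frac12\int_{\Omega}|Au_{\varepsilon}|^{2}+C\Big(\int_{\Omega}|\nabla u_{\varepsilon}|^{2}\Big)^{2}+C ,
$$
the point being that the factor which cannot be absorbed into $\int_{\Omega}|Au_{\varepsilon}|^{2}$ is a \emph{quadratic} expression in $\int_{\Omega}|\nabla u_{\varepsilon}|^{2}$, hence of the form $\rho(t)\,y(t)$ with $\rho$ integrable in time.

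Combining the three steps, with $y(t):=\int_{\Omega}|\nabla u_{\varepsilon}(\cdot,t)|^{2}$ and $\rho(t):=C\int_{\Omega}|\nabla u_{\varepsilon}(\cdot,t)|^{2}$, I obtain
$$
y'(t)\le\rho(t)y(t)+C\|n_{\varepsilon}(\cdot,t)\|_{L^{2}(\Omega)}^{2}+C\qquad\text{for all }t\in(0,T_{max,\varepsilon}).
$$
By \eqref{bnmbncz2.5ghhjuyuivvbnnihjj} one has $\int_{t}^{t+\tau}\rho(s)\,ds\le C$ and $\int_{t}^{t+\tau}y(s)\,ds\le C$, while the preceding step gives $\int_{t}^{t+\tau}\big(C\|n_{\varepsilon}\|_{L^{2}(\Omega)}^{2}+C\big)\,ds\le C$, for all $t\in(0,T_{max,\varepsilon}-\tau)$. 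Proceeding exactly as at the end of the proof of Lemma~\ref{lemma4556664ddd5630223} --- for each $t$ choose $t_{0}\in[(t-\tau)_{+},t)$ with $y(t_{0})\le C$ (here also invoking $u_{0}\in D(A)$ for small $t$), and then apply Gr\"onwall's inequality on $[t_{0},t]$ --- one arrives at $y(t)\le C$ uniformly in $\varepsilon$ and $t$, which is precisely \eqref{ddxcvbbggddfgcz2vv.5ghju48cfg924ghyuji}. The main obstacle is the convective-term estimate of the third paragraph: it must be carried out so that no uncontrollable power of $\|Au_{\varepsilon}\|_{L^{2}(\Omega)}$ survives and so that all constants are independent of $\varepsilon$, which is exactly what the contraction and commutation properties of the Yosida regularization $Y_{\varepsilon}$ together with the sharp two-dimensional interpolation exponents provide.
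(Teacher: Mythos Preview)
Your proposal is correct and follows essentially the same route as the paper: test the fluid equation with $Au_{\varepsilon}$, control the convective term via the two-dimensional Ladyzhenskaya-type interpolation together with the contraction and commutation properties of $Y_{\varepsilon}$ so that only a factor $\|\nabla u_{\varepsilon}\|_{L^{2}}^{4}$ survives, and close by the uniform Gr\"onwall argument using \eqref{bnmbncz2.5ghhjuyuivvbnnihjj} and \eqref{3.10gghhjuuloollsdffffffgghhhy}. The only cosmetic differences are that the paper first squares the right-hand side via Young's inequality before interpolating, and it invokes the equivalence of $\|A(\cdot)\|_{L^{2}}$ with $\|\cdot\|_{W^{2,2}}$ on $D(A)$ directly rather than through the Stokes elliptic estimate.
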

\begin{proof}
Firstly, applying the Helmholtz projection to both sides of the first equation in \dref{1.1fghyuisda},
then multiplying the result identified by $Au_{\varepsilon}$, integrating by parts, and using the Young inequality, we find that
\begin{equation}
\begin{array}{rl}
&\disp{\frac{1}{{2}}\frac{d}{dt}\|A^{\frac{1}{2}}u_{\varepsilon}\|^{{{2}}}_{L^{{2}}(\Omega)}+
\int_{\Omega}|Au_{\varepsilon}|^2 }
\\
=&\disp{ \int_{\Omega}Au_{\varepsilon}\mathcal{P}(-\kappa
(Y_{\varepsilon}u_{\varepsilon} \cdot \nabla)u_{\varepsilon})+ \int_{\Omega}\mathcal{P}(n_{\varepsilon}\nabla\phi) Au_{\varepsilon}}
\\
\leq&\disp{ \frac{1}{2}\int_{\Omega}|Au_{\varepsilon}|^2+\kappa^2\int_{\Omega}
|(Y_{\varepsilon}u_{\varepsilon} \cdot \nabla)u_{\varepsilon}|^2+ \|\nabla\phi\|^2_{L^\infty(\Omega)}\int_{\Omega}n_{\varepsilon}^2~~\mbox{for all}~~t\in(0,T_{max,\varepsilon})}
\end{array}
\label{ddfghgghjjnnhhkklld911cz2.5ghju48}
\end{equation}
Noticing that $\|Y_{\varepsilon}u_{\varepsilon}\|_{L^2(\Omega)}\leq\|u_{\varepsilon}\|_{L^2(\Omega)},$
it follows from the Gagliardo-Nirenberg inequality and the Cauchy-Schwarz inequality that with some $C_1 >0$ and $C_2 > 0$
\begin{equation}
\begin{array}{rl}
&\kappa^2\disp\int_{\Omega}
|(Y_{\varepsilon}u_{\varepsilon} \cdot \nabla)u_{\varepsilon}|^2\\
\leq&\disp{ \kappa^2\|Y_{\varepsilon}u_{\varepsilon}\|^2_{L^4(\Omega)}\|\nabla u_{\varepsilon}\|^2_{L^4(\Omega)}}
\\
\leq&\disp{ \kappa^2C_1[\|\nabla Y_{\varepsilon}u_{\varepsilon}\|_{L^2(\Omega)}\|Y_{\varepsilon}u_{\varepsilon}\|_{L^2(\Omega)}]
[\|A u_{\varepsilon}\|_{L^2(\Omega)}\|\nabla u_{\varepsilon}\|_{L^2(\Omega)}]
}\\
\leq&\disp{ \kappa^2C_1C_{2}\|\nabla Y_{\varepsilon}u_{\varepsilon}\|_{L^2(\Omega)}
[\|A u_{\varepsilon}\|_{L^2(\Omega)}\|\nabla u_{\varepsilon}\|_{L^2(\Omega)}]
~~\mbox{for all}~~t\in(0,T_{max,\varepsilon})}.
\end{array}
\label{ssdcfvgddfghgghjd911cz2.5ghju48}
\end{equation}
Now, from the fact that $D( A^{\frac{1}{2}})  :=W^{1,2}_0(\Omega;\mathbb{R}^2) \cap L_{\sigma}^{2}(\Omega)$ and
\dref{czfvgb2.5ghhjuyuccvviihjj}, it follows that
\begin{equation}
\|\nabla Y_{\varepsilon}u_{\varepsilon}\|_{L^2(\Omega)}=\|A^{\frac{1}{2}} Y_{\varepsilon}u_{\varepsilon}\|_{L^2(\Omega)}
=\|Y_{\varepsilon} A^{\frac{1}{2}} u_{\varepsilon}\|_{L^2(\Omega)}\leq \|A^{\frac{1}{2}}  u_{\varepsilon}\|_{L^2(\Omega)}\leq \|\nabla  u_{\varepsilon}\|_{L^2(\Omega)}.
\label{ssdcfdhhgghjjnnhhkklld911cz2.5ghju48}
\end{equation}
Due to  Theorem 2.1.1 in \cite{Sohr},  $\|A(\cdot)\|_{L^{2}(\Omega)}$ defines a norm
equivalent to $\|\cdot\|_{W^{2,2}(\Omega)}$ on $D(A)$.
This, together with the Young inequality and estimates \dref{ssdcfdhhgghjjnnhhkklld911cz2.5ghju48} and \dref{ssdcfvgddfghgghjd911cz2.5ghju48}, yields
$$
\begin{array}{rl}
&\kappa^2\disp\int_{\Omega}|(Y_{\varepsilon}u_{\varepsilon} \cdot \nabla)u_{\varepsilon}|^2
\\
\leq&\disp{ C_{3}\|A u_{\varepsilon}\|_{L^2(\Omega)}\|\nabla u_{\varepsilon}\|_{L^2(\Omega)}^2}
\\
\leq&\disp{ \frac{1}{4}\|A u_{\varepsilon}\|_{L^2(\Omega)}+\kappa^4C_{1}^2C_{2}^2\|\nabla u_{\varepsilon}\|_{L^2(\Omega)}^4
~~\mbox{for all}~~t\in(0,T_{max,\varepsilon}),}
\end{array}
$$
which combining with \dref{ddfghgghjjnnhhkklld911cz2.5ghju48} implies that
$$
\disp\frac{1}{{2}}\frac{d}{dt}\|A^{\frac{1}{2}}u_{\varepsilon}\|^{{{2}}}_{L^{{2}}(\Omega)}
\leq\disp{ \kappa^4C_{1}^2C_{2}^2\|\nabla u_{\varepsilon}\|_{L^2(\Omega)}^4+ \|\nabla\phi\|^2_{L^\infty(\Omega)}\int_{\Omega}n_{\varepsilon}^2~\mbox{for all}~t\in(0,T_{max,\varepsilon}),}
$$
By the fact that $\|A^{\frac{1}{2}}u_{\varepsilon}\|^{{{2}}}_{L^{{2}}(\Omega)} = \|\nabla u_{\varepsilon}\|^{{{2}}}_{L^{{2}}(\Omega)},$ we conclude that
\begin{equation}
z'(t)\leq\rho(t)z(t)+ h(t)\disp{~~\mbox{for all}~~t\in(0,T_{max,\varepsilon}),}
\label{ddfghgffgghggddhjjjhjjnnhhkklld911cz2.5ghju48}
\end{equation}
where
$$
z(t) :=\int_{\Omega}|\nabla u_{\varepsilon}(\cdot, t)|^2,
$$
as well as
$$
\rho(t) =2\kappa^4C_{1}^2C_{2}^2\int_{\Omega}|\nabla u_{\varepsilon}(\cdot, t)|^2
$$
and
$$
h(t)=2 \|\nabla\phi\|^2_{L^\infty(\Omega)}\int_{\Omega}n_{\varepsilon}^2(\cdot,t).
$$
However, \dref{bnmbncz2.5ghhjuyuivvbnnihjj} along with \dref{3.10gghhjuuloollsdffffffgghhhy} warrants that for  some  positive constant $\alpha_0$,
\begin{equation}
\label{3333cz2.kkk5kke345677ddfddffddddf89001ddff214114114rrggjjkk}
\disp\int_{t}^{t+\tau}\int_\Omega  |\nabla {u_{\varepsilon}}|^2\leq\alpha_{0}~~\mbox{for all}~~ t\in(0,T_{max,\varepsilon}-\tau)
\end{equation}
and
\begin{equation}
\label{3333cz2.kkk5kke345fffff677ddfdddddf89001ddff214114114rrggjjkk}
\disp\int_{t}^{t+\tau}\int_\Omega n_\varepsilon  ^{2}\leq\alpha_{0}~~\mbox{for all}~~ t\in(0,T_{max,\varepsilon}-\tau)
\end{equation}
with $\tau=\min\{1,\frac{1}{6}T_{max,\varepsilon}\}.$
Now, \dref{3333cz2.kkk5kke345677ddfddffddddf89001ddff214114114rrggjjkk}
and \dref{3333cz2.kkk5kke345fffff677ddfdddddf89001ddff214114114rrggjjkk}    ensure that for all $t\in(0,T_{max,\varepsilon}-\tau)$
$$
\int_{t}^{t+\tau}\rho(s)ds\leq\disp{ 2C_{3}^2\alpha_0}
$$
and
$$
\int_{t}^{t+\tau}h(s)ds\leq\disp{ 4 \|\nabla\phi\|^2_{L^\infty(\Omega)}\alpha_0.}
$$
For given $t\in (0, T_{max,\varepsilon})$, applying \dref{3333cz2.kkk5kke345677ddfddffddddf89001ddff214114114rrggjjkk} again,
we can choose $t_0 \geq 0$ such that $t_0\in [t-\tau, t)$ and
$$
\disp{\int_{\Omega}|\nabla u_{\varepsilon}(\cdot,t_0)|^2\leq C_{4},}
$$
which combined with \dref{ddfghgffgghggddhjjjhjjnnhhkklld911cz2.5ghju48} implies that
\begin{equation}
\begin{array}{rl}
z(t)\leq&\disp{z(t_0)e^{\int_{t_0}^t\rho(s)ds}+\int_{t_0}^te^{\int_{s}^t\rho(\tau)d\tau}h(s)ds}
\\
\leq&\disp{C_{4}e^{2C_{3}^2\alpha_0}+\int_{t_0}^te^{2C_{3}^2\alpha_0}h(s)ds}
\\
\leq&\disp{C_{4}e^{2C_{3}^2\alpha_0}+e^{2C_{3}^2\alpha_0}4 \|\nabla\phi\|^2_{L^\infty(\Omega)}\alpha_{0}~~\mbox{for all}~~t\in(0,T_{max,\varepsilon})}
\end{array}
\label{czfvgb2.5ghhddffggjuyghjjjuffghhhddfghhccvjkkklllhhjkkviihjj}
\end{equation}
by integration. The claimed inequality \dref{ddxcvbbggddfgcz2vv.5ghju48cfg924ghyuji} thus results from \dref{czfvgb2.5ghhddffggjuyghjjjuffghhhddfghhccvjkkklllhhjkkviihjj}.
\end{proof}

\begin{lemma}\label{lemma4563025xxhjkloghyui}
Let $m>1$. Then there exists a positive constant $C$ independent of $\varepsilon$ such that the solution of \dref{1.1fghyuisda} satisfies
\begin{equation}\label{hjui909klopji115}
\disp{\|\nabla c_\varepsilon(\cdot, t)\|_{L^{2m}(\Omega)}\leq C~~\mbox{for all}~~ t\in(0,T_{max}).}
\end{equation}
\end{lemma}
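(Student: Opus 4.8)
\emph{Proof idea.} The plan is to read off the claimed $L^{2m}$-bound for $\nabla c_\varepsilon$ directly from the variation-of-constants representation of $c_\varepsilon$, using only the estimates already secured in Lemmas \ref{lemmajddggmk43025xxhjklojjkkk}, \ref{lemma4556664ddd5630223} and \ref{lemma630jklhhjj}, thereby avoiding any further differential-inequality argument (in particular avoiding the delicate boundary term $\partial_\nu|\nabla c_\varepsilon|^2$ that a $W^{1,2m}$-energy estimate would produce on a non-convex domain). Let $(e^{\sigma\Delta})_{\sigma\ge0}$ denote the Neumann heat semigroup on $\Omega$. Treating $n_\varepsilon-u_\varepsilon\cdot\nabla c_\varepsilon$ as the inhomogeneity in the second equation of \dref{1.1fghyuisda}, the classical solution from Lemma \ref{lemma70} satisfies
$$c_\varepsilon(\cdot,t)=e^{t(\Delta-1)}c_0+\int_0^t e^{(t-s)(\Delta-1)}\big(n_\varepsilon(\cdot,s)-u_\varepsilon(\cdot,s)\cdot\nabla c_\varepsilon(\cdot,s)\big)\,ds ,$$
so that, applying $\nabla$ and taking $L^{2m}(\Omega)$-norms, it remains to bound three contributions. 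I would invoke the standard smoothing properties of the Neumann heat semigroup: there is $\lambda>0$ such that, since $N=2$,
$$\|\nabla e^{\sigma\Delta}w\|_{L^{2m}(\Omega)}\le C\Big(1+\sigma^{-\frac12-(\frac1q-\frac1{2m})}\Big)e^{-\lambda\sigma}\|w\|_{L^{q}(\Omega)}\qquad\text{for all }\sigma>0,\ 1\le q\le 2m .$$

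For the first term, $\|\nabla e^{t(\Delta-1)}c_0\|_{L^{2m}(\Omega)}\le Ce^{-t}\|c_0\|_{W^{1,2m}(\Omega)}\le C$ for all $t>0$, by the $W^{1,2m}$-boundedness of the Neumann heat semigroup together with $c_0\in W^{2,\infty}(\Omega)\subset W^{1,2m}(\Omega)$, see \dref{ccvvx1.731426677gg}. For the term involving $n_\varepsilon$, I would apply the estimate above with $q=m$: Lemma \ref{lemma4556664ddd5630223} (via \dref{334444zjscz2.5297x9630222ssdd2114}, using $n_\varepsilon\ge0$) gives $\|n_\varepsilon(\cdot,s)\|_{L^m(\Omega)}\le C$ uniformly in $s\in(0,T_{max,\varepsilon})$ and $\varepsilon$, so this contribution is at most $C\int_0^t(1+(t-s)^{-\frac12-\frac1{2m}})e^{-\lambda(t-s)}\,ds\le C$, where convergence of the integral is exactly the point at which the hypothesis $m>1$ (equivalently $\frac12+\frac1{2m}<1$) enters.

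The only term requiring care is the fluid-convection one, and the key is to keep $u_\varepsilon\cdot\nabla c_\varepsilon$ as it stands rather than rewriting it as $\nabla\cdot(u_\varepsilon c_\varepsilon)$ — the latter would put a second spatial derivative onto the semigroup and produce a non-integrable singularity at $s=t$. I would fix $q\in\big(\tfrac{2m}{m+1},2\big)$, which is nonempty because $\tfrac{2m}{m+1}<2$. By \dref{czfvgb2.5ghhjuyuccvviihjj} and \dref{ddxcvbbggddfgcz2vv.5ghju48cfg924ghyuji} one has $\|u_\varepsilon\|_{W^{1,2}(\Omega)}\le C$, hence $\|u_\varepsilon\|_{L^{2q/(2-q)}(\Omega)}\le C$ by the two-dimensional Sobolev embedding, while $\|\nabla c_\varepsilon\|_{L^2(\Omega)}\le C$ by \dref{334444zjscz2.5297x9630222ssdd2114}; H\"older's inequality then gives $\|u_\varepsilon(\cdot,s)\cdot\nabla c_\varepsilon(\cdot,s)\|_{L^q(\Omega)}\le C$ uniformly. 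Inserting this into the smoothing estimate with exponents $q$ and $2m$, the resulting exponent $\tfrac12+\tfrac1q-\tfrac1{2m}$ is $<1$ by the choice of $q$, so this contribution is again bounded by $C\int_0^t(1+(t-s)^{-\frac12-\frac1q+\frac1{2m}})e^{-\lambda(t-s)}\,ds\le C$. Summing the three estimates yields \dref{hjui909klopji115}.

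Thus the anticipated main obstacle is precisely the convection term: one must resist integrating by parts and instead exploit that, in two space dimensions, the $H^1$-bound on $u_\varepsilon$ from Lemma \ref{lemma630jklhhjj} places $u_\varepsilon$ in every $L^s(\Omega)$ with $s<\infty$, so that $u_\varepsilon\cdot\nabla c_\varepsilon\in L^q(\Omega)$ for all $q<2$ — and this, together with $m>1$, is just barely enough regularity for the gradient-semigroup estimate mapping into $L^{2m}(\Omega)$ to be time-integrable.
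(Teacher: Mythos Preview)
Your semigroup argument is correct and genuinely different from the paper's proof. The paper derives an energy inequality for $\|\nabla c_\varepsilon\|_{L^{2m}}^{2m}$ by testing the $c_\varepsilon$-equation against $|\nabla c_\varepsilon|^{2m-2}\nabla c_\varepsilon$ and using the identity $\nabla c_\varepsilon\cdot\nabla\Delta c_\varepsilon=\tfrac12\Delta|\nabla c_\varepsilon|^2-|D^2c_\varepsilon|^2$; the resulting boundary integral $\int_{\partial\Omega}|\nabla c_\varepsilon|^{2m-2}\partial_\nu|\nabla c_\varepsilon|^2$ is then absorbed via a trace inequality and a fractional Gagliardo--Nirenberg interpolation against the already-known $L^2$-bound on $\nabla c_\varepsilon$ --- this is precisely where the paper removes the convexity hypothesis that earlier works imposed. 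The right-hand side is closed using the space--time bound \dref{3.10gghhjuuloollsdffffffgghhhy} on $\int\!\!\int (n_\varepsilon+\varepsilon)^{2m}$ and the embedding $W^{1,2}\hookrightarrow L^{4m+2}$ for $u_\varepsilon$, and the ODE comparison Lemma \ref{lemma630} finishes.

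Your route bypasses the boundary term entirely by working with the Duhamel representation and Neumann heat-semigroup smoothing, and it needs strictly less from the earlier lemmas: only the \emph{pointwise-in-time} bounds $\|n_\varepsilon\|_{L^m}\le C$ and $\|\nabla c_\varepsilon\|_{L^2}\le C$ from \dref{334444zjscz2.5297x9630222ssdd2114}, together with $\|u_\varepsilon\|_{W^{1,2}}\le C$ from Lemma \ref{lemma630jklhhjj}. The decisive observation that $u_\varepsilon\cdot\nabla c_\varepsilon$ should be kept as is (so that only one spatial derivative falls on the semigroup) is exactly right, and your exponent bookkeeping --- $\tfrac12+\tfrac1{2m}<1$ for the $n_\varepsilon$-term, $\tfrac12+\tfrac1q-\tfrac1{2m}<1$ for $q\in(\tfrac{2m}{m+1},2)$ --- is correct in dimension two. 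What the paper's approach buys in exchange for its extra machinery is the dissipative by-products $\int|\nabla c_\varepsilon|^{2m-2}|D^2c_\varepsilon|^2$ and $\int\big|\nabla|\nabla c_\varepsilon|^m\big|^2$, though these are not used later; your argument is shorter and more elementary for the stated conclusion.
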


\begin{proof}
Considering the fact that $\nabla c_{\varepsilon}\cdot\nabla\Delta c_{\varepsilon}  = \frac{1}{2}\Delta |\nabla c_{\varepsilon}|^2-|D^2c_{\varepsilon}|^2$,
by a straightforward computation using the second equation in \dref{1.1fghyuisda} and several integrations by parts, we find that
\begin{equation}
\begin{array}{rl}
&\disp{\frac{1}{{2m}}\frac{d}{dt} \|\nabla c_{\varepsilon}\|^{{{2m}}}_{L^{{2m}}(\Omega)}}
\\
= &\disp{\int_{\Omega} |\nabla c_{\varepsilon}|^{2m-2}\nabla c_{\varepsilon}\cdot\nabla(\Delta c_{\varepsilon}
-c_{\varepsilon}+n_{\varepsilon}-u_{\varepsilon}\cdot\nabla  c_{\varepsilon})}
\\
=&\disp{\frac{1}{{2}}\int_{\Omega} |\nabla c_{\varepsilon}|^{2m-2}\Delta |\nabla c_{\varepsilon}|^2
-\int_{\Omega} |\nabla c_{\varepsilon}|^{2m-2}|D^2 c_{\varepsilon}|^2-\int_{\Omega} |\nabla c_{\varepsilon}|^{2m}}
\\
&-\disp{\int_\Omega n_{\varepsilon}\nabla\cdot( |\nabla c_{\varepsilon}|^{2m-2}\nabla c_{\varepsilon})
+\int_\Omega (u_{\varepsilon}\cdot\nabla  c_{\varepsilon})\nabla\cdot( |\nabla c_{\varepsilon}|^{2m-2}\nabla c_{\varepsilon})}
\\
=&\disp{-\frac{\beta-1}{{2}}\int_{\Omega} |\nabla c_{\varepsilon}|^{2m-4}\left|\nabla |\nabla c_{\varepsilon}|^{2}\right|^2
+\frac{1}{{2}}\int_{\partial\Omega} |\nabla c_{\varepsilon}|^{2m-2}\frac{\partial  |\nabla c_{\varepsilon}|^{2}}{\partial\nu}
-\int_{\Omega} |\nabla c_{\varepsilon}|^{2m}}\\
&-\disp{\int_{\Omega} |\nabla c_{\varepsilon}|^{2m-2}|D^2 c_{\varepsilon}|^2
-\int_\Omega n_{\varepsilon} |\nabla c_{\varepsilon}|^{2m-2}\Delta c_{\varepsilon}-\int_\Omega n_{\varepsilon}\nabla c_{\varepsilon}\cdot\nabla( |\nabla c_{\varepsilon}|^{2m-2})}
\\
&+\disp{\int_\Omega (u_{\varepsilon}\cdot\nabla  c_{\varepsilon}) |\nabla c_{\varepsilon}|^{2m-2}\Delta c_{\varepsilon}
+\int_\Omega (u_{\varepsilon}\cdot\nabla  c_{\varepsilon})\nabla c_{\varepsilon}\cdot\nabla( |\nabla c_{\varepsilon}|^{2m-2})}
\\
=&\disp{-\frac{2({m}-1)}{{{m}^2}}\int_{\Omega}\left|\nabla |\nabla c_{\varepsilon}|^{m}\right|^2
+\frac{1}{{2}}\int_{\partial\Omega} |\nabla c_{\varepsilon}|^{2m-2}\frac{\partial  |\nabla c_{\varepsilon}|^{2}}{\partial\nu}
-\int_{\Omega} |\nabla c_{\varepsilon}|^{2m-2}|D^2 c_{\varepsilon}|^2}
\\
&-\disp{\int_\Omega n_{\varepsilon} |\nabla c_{\varepsilon}|^{2m-2}\Delta c_{\varepsilon}
-\int_\Omega n_{\varepsilon}\nabla c_{\varepsilon}\cdot\nabla( |\nabla c_{\varepsilon}|^{2m-2})-\int_{\Omega} |\nabla c_{\varepsilon}|^{2m}}
\\
&+\disp{\int_\Omega (u_{\varepsilon}\cdot\nabla  c_{\varepsilon}) |\nabla c_{\varepsilon}|^{2m-2}\Delta c_{\varepsilon}
+\int_\Omega (u_{\varepsilon}\cdot\nabla  c_{\varepsilon})\nabla c_{\varepsilon}\cdot\nabla( |\nabla c_{\varepsilon}|^{2m-2})}
\end{array}
\label{cz2.5ghju48156}
\end{equation}
for all $t\in(0,T_{max})$.
Here, since $|\Delta c_{\varepsilon}| \leq\sqrt{2}|D^2c_{\varepsilon}|$, by utilizing the Young inequality, we can estimate
\begin{equation}
\begin{array}{rl}
&\disp\int_\Omega n_{\varepsilon} |\nabla c_{\varepsilon}|^{2m-2}\Delta c_{\varepsilon}
\\
\leq&\disp{\sqrt{2}\int_\Omega n_{\varepsilon} |\nabla c_{\varepsilon}|^{2m-2}|D^2c_{\varepsilon}|}
\\
\leq&\disp{\frac{1}{4}\int_\Omega  |\nabla c_{\varepsilon}|^{2m-2}|D^2c_{\varepsilon}|^2+{2}\int_\Omega n^2_{\varepsilon} |\nabla c_{\varepsilon}|^{2m-2}}
\\
\leq&\disp{\frac{1}{4}\int_\Omega  |\nabla c_{\varepsilon}|^{2m-2}|D^2c_{\varepsilon}|^2+{2}\int_\Omega (n_{\varepsilon}+\varepsilon)^2 |\nabla c_{\varepsilon}|^{2m-2}}
\end{array}
\label{cz2.5ghju48hjuikl1}
\end{equation}
and, similarly,
\begin{equation}
\begin{array}{rl}
&\disp\int_\Omega (u_{\varepsilon}\cdot\nabla  c_{\varepsilon}) |\nabla c_{\varepsilon}|^{2{m}-2}\Delta c_{\varepsilon}
\\
\leq&\disp{\sqrt{2}\int_\Omega |u_{\varepsilon}\cdot\nabla  c_{\varepsilon}| |\nabla c_{\varepsilon}|^{2{m}-2}|D^2c_{\varepsilon}|}
\\
\leq&\disp{\frac{1}{4}\int_\Omega  |\nabla c_{\varepsilon}|^{2{m}-2}|D^2c_{\varepsilon}|^2
+2\int_\Omega |u_{\varepsilon}\cdot\nabla  c_{\varepsilon}|^2 |\nabla c_{\varepsilon}|^{2{m}-2}}
\\
\leq&\disp{\frac{1}{4}\int_\Omega  |\nabla c_{\varepsilon}|^{2{m}-2}|D^2c_{\varepsilon}|^2
+2\int_\Omega |u_{\varepsilon}|^2 |\nabla c_{\varepsilon}|^{2{m}}}
\\
\leq&\disp{\frac{1}{4}\int_\Omega  |\nabla c_{\varepsilon}|^{2{m}-2}|D^2c_{\varepsilon}|^2
+2\int_\Omega |u_{\varepsilon}|^2 |\nabla c_{\varepsilon}|^{2{m}}}
\end{array}
\label{cz2.5ghju48hjuikl451}
\end{equation}
for all $t\in(0,T_{max})$. Again, from the Young inequality, we have
\begin{equation}
\begin{array}{rl}
&-\disp\int_\Omega n_{\varepsilon}\nabla c_{\varepsilon}\cdot\nabla( |\nabla c_{\varepsilon}|^{2{m}-2})
\\
= &\disp{-({m}-1)\int_\Omega n_{\varepsilon} |\nabla c_{\varepsilon}|^{2({m}-2)}\nabla c_{\varepsilon}\cdot\nabla |\nabla c_{\varepsilon}|^{2}}
\\
\leq &\disp{\frac{{m}-1}{8}\int_{\Omega} |\nabla c_{\varepsilon}|^{2{m}-4}\left|\nabla |\nabla c_{\varepsilon}|^{2}\right|^2+2({m}-1)
\int_\Omega |n_{\varepsilon}|^2 |\nabla c_{\varepsilon}|^{2{m}-2}}
\\
\leq &\disp{\frac{({m}-1)}{2{{m}^2}}\int_{\Omega}\left|\nabla |\nabla c_{\varepsilon}|^{m}\right|^2+2({m}-1)
\int_\Omega |n_{\varepsilon}|^2 |\nabla c_{\varepsilon}|^{2{m}-2}}
\end{array}
\label{cz2.5ghju4ghjuk81}
\end{equation}
and
\begin{equation}
\begin{array}{rl}
&\int_\Omega (u_{\varepsilon}\cdot\nabla  c_{\varepsilon})\nabla c_{\varepsilon}\cdot\nabla( |\nabla c_{\varepsilon}|^{2{m}-2})
\\
= &\disp{({m}-1)\int_\Omega (u_{\varepsilon}\cdot\nabla  c_{\varepsilon}) |\nabla c_{\varepsilon}|^{2(\beta-2)}\nabla c_{\varepsilon}\cdot
\nabla |\nabla c_{\varepsilon}|^{2}}
\\
\leq &\disp{\frac{{m}-1}{8}\int_{\Omega} |\nabla c_{\varepsilon}|^{2{m}-4}\left|\nabla |\nabla c_{\varepsilon}|^{2}\right|^2}
\\
&+\disp{2({m}-1)\int_\Omega |u_{\varepsilon}\cdot\nabla  c_{\varepsilon}|^2 |\nabla c_{\varepsilon}|^{2{m}-2}}
\\
\leq &\disp{\frac{({m}-1)}{2{{m}^2}}\int_{\Omega}\left|\nabla |\nabla c_{\varepsilon}|^{m}\right|^2
+2({m}-1)\int_\Omega |u_{\varepsilon}|^2 |\nabla c_{\varepsilon}|^{2{m}}.}
\end{array}
\label{cz2.5ghju4ghjuk81}
\end{equation}
Observe that
\begin{equation}
\begin{array}{rl}
&\disp{\int_{\partial\Omega}\frac{\partial |\nabla c_{\varepsilon}|^2}{\partial\nu} |\nabla c_{\varepsilon}|^{2{m}-2} }\\
\leq&\disp{C_\Omega\int_{\partial\Omega} |\nabla c_{\varepsilon}|^{2{m}} }\\
=&\disp{C_\Omega| |\nabla c_{\varepsilon}|^{m}\|^2_{L^2(\partial\Omega)}.}\\
\end{array}
\label{cz2.57151hhkkhhgg}
\end{equation}
Let us take $r\in(0,\frac{1}{2})$. Due to Proposition 4.22 (ii) of \cite{Haroske},
we have that $W^{r+\frac{1}{2},2}(\Omega)\hookrightarrow L^2(\partial\Omega)$ is compact, so that,
\begin{equation}
\begin{array}{rl}
&\disp{\| |\nabla c_{\varepsilon}|^{m}\|^2_{L^2{(\partial\Omega})}\leq C_1\| |\nabla c_{\varepsilon}|^{m}\|^2_{W^{r+\frac{1}{2},2}(\Omega)}.}\\
\end{array}
\label{cz2.57151}
\end{equation}
Now, let us pick $a=\frac{2{m}+2r-1}{2{m}}$. By $r\in(0,\frac{1}{2})$ and $\beta>1$, it implies that $r+\frac{1}{2}\leq a<1$.
Therefore, from the fractional Gagliardo--Nirenberg inequality and Lemma \ref{lemma4556664ddd5630223}, for some positive constants $\delta_0,\delta_1$ and $C_1$,
we conclude
\begin{equation}
\begin{array}{rl}
&\disp{\| |\nabla c_{\varepsilon}|^{m}\|^2_{W^{r+\frac{1}{2},2}(\Omega)}}
\\
\leq&\disp{\delta_0\|\nabla |\nabla c_{\varepsilon}|^{m}\|^a_{L^2(\Omega)}\| |\nabla c_{\varepsilon}|^\beta\|^{1-a}_{L^{\frac{2}{m}}(\Omega)}
+\delta_1\| |\nabla c_{\varepsilon}|^\beta\|_{L^{\frac{2}{m}}(\Omega)}}
\\
\leq&\disp{C_1\|\nabla |\nabla c_{\varepsilon}|^{m}\|^a_{L^2(\Omega)}+C_1}.
\end{array}
\label{vvggcz2.57151}
\end{equation}
Combining \dref{cz2.57151hhkkhhgg}--\dref{vvggcz2.57151}, using the Young inequality and the fact that $a\in(0,1)$, it yields
\begin{equation}
\begin{array}{rl}
&\disp\int_{\partial\Omega}\frac{\partial |\nabla c_{\varepsilon}|^2}{\partial\nu} |\nabla c_{\varepsilon}|^{2{m}-2}
\\
\leq &\disp{C_2\|\nabla |\nabla c_{\varepsilon}|^{m}\|^a_{L^2(\Omega)}+C_2}
\\
\leq &\disp{\frac{({m}-1)}{2{{m}^2}}\int_{\Omega}\left|\nabla |\nabla c_{\varepsilon}|^{m}\right|^2+C_3}.
\end{array}
\label{cz2.57151hhkkhhggyyxx}
\end{equation}
Now, together with \dref{cz2.5ghju48156}--\dref{cz2.5ghju4ghjuk81} and \dref{cz2.57151hhkkhhggyyxx}, we can derive that, for some positive constant $C_4$,
\begin{equation}\label{hjui909klopsssdddji115}
\begin{array}{rl}
&\disp{\frac{1}{{2{m}}}\frac{d}{dt}\|\nabla c_{\varepsilon}\|^{{{2{m}}}}_{L^{{2{m}}}(\Omega)}
+\frac{{m}-1}{2{{m}^2}}\int_{\Omega}\left|\nabla |\nabla c_{\varepsilon}|^{{m}}\right|^2
+\frac{1}{2}\int_\Omega  |\nabla c_{\varepsilon}|^{2{m}-2}|D^2c_{\varepsilon}|^2+\int_{\Omega} |\nabla c_{\varepsilon}|^{2{m}}}
\\
\leq&\disp{2{m}\int_\Omega n^2_{\varepsilon} |\nabla c_{\varepsilon}|^{2{m}-2}
+2{m}\int_\Omega |u_{\varepsilon}|^2 |\nabla c_{\varepsilon}|^{2{m}}+C_4~~\mbox{for all}~~ t\in(0,T_{max}).}
\end{array}
\end{equation}
We proceed to estimate the first term on the right-hand side of \dref{hjui909klopsssdddji115}. By using the Young inequality, we conclude that
\begin{equation}\label{hjui909klopsddddssdddji115}
\begin{array}{rl}
&\disp{2{m}\int_\Omega n^2_{\varepsilon} |\nabla c_{\varepsilon}|^{2{m}-2}}
\\
\leq&\disp{2{m}\int_\Omega (n_{\varepsilon}+\varepsilon)^2 |\nabla c_{\varepsilon}|^{2{m}-2}}
\\
\leq&\disp{\frac{1}{2}\int_\Omega|\nabla c_{\varepsilon}|^{2{m}}+C_5\int_\Omega (n_{\varepsilon}+\varepsilon)^{2m}~~\mbox{for all}~~ t\in(0,T_{max})}
\end{array}
\end{equation}
and
\begin{equation}\label{hjui909klopsddddssdddjssssi115}
2{m}\disp\int_\Omega |u_{\varepsilon}|^2 |\nabla c_{\varepsilon}|^{2{m}}
\leq\disp{\int_\Omega  |\nabla c_{\varepsilon}|^{2{m}+1}+C_6\int_\Omega  u_{\varepsilon}^{4{m}+2}~~\mbox{for all}~~ t\in(0,T_{max}),}
\end{equation}
where $C_5=\frac{m}{m-1}\left(\frac{1}{2}m\right)^{-\frac{1}{m-1}}(2{m})^{m}$ and $C_6=(2{m})^{2{m}+1}$.
On the other hand, due to \dref{334444zjscz2.5297x9630222ssdd2114}, we derive from the Gagliardo--Nirenberg inequality that for some positive constants $C_7$ and $C_8$
$$
\begin{array}{rl}
&\disp{\int_\Omega  |\nabla c_{\varepsilon}|^{2{m}+1}}
\\
=&\disp{\| |\nabla c_{\varepsilon}|^{m}\|_{L^{\frac{2{m}+1}{m}}(\Omega)}^{\frac{2{m}+1}{m}}}
\\
\leq&\disp{C_7(\|\nabla |\nabla c_{\varepsilon}|^m\|_{L^2(\Omega)}^{\frac{2m-1}{2m+1}}\| |\nabla c_{\varepsilon}|^m\|_{L^\frac{2}{m}(\Omega)}^{\frac{2}{2m+1}}
+\|   |\nabla c_{\varepsilon}|^m\|_{L^\frac{2}{m}
(\Omega)})^{\frac{2{m}+1}{m}}}
\\
\leq&\disp{C_8(\|\nabla |\nabla c_{\varepsilon}|^m\|_{L^2(\Omega)}^{\frac{2m-1}{m}}+1),}
\end{array}
$$
which together with the Young inequality provides a constant $C_9$  such that
\begin{equation}
\disp{\int_\Omega  |\nabla c_{\varepsilon}|^{2{m}+1}}\leq\disp{\frac{{m}-1}{2{{m}^2}}\int_{\Omega}
\left|\nabla |\nabla c_{\varepsilon}|^{{m}}\right|^2+C_9.}
\label{cz2.563022222ikosssplsss255}
\end{equation}
Inserting \dref{cz2.563022222ikosssplsss255} into \dref{hjui909klopsddddssdddjssssi115}, we derive that
\begin{equation}\label{hddssddjui909klopsddddssdddjssssi115}
\begin{array}{rl}
&2{m}\disp\int_\Omega |u_{\varepsilon}|^2 |\nabla c_{\varepsilon}|^{2{m}}
\\
\leq&\disp{\frac{{m}-1}{2{{m}^2}}\int_{\Omega}
\left|\nabla |\nabla c_{\varepsilon}|^{{m}}\right|^2+C_6\int_\Omega  u_{\varepsilon}^{4{m}+2}+C_9~\mbox{for all}~ t\in(0,T_{max}).}
\end{array}
\end{equation}
Substituting \dref{hjui909klopsddddssdddji115} and \dref{hddssddjui909klopsddddssdddjssssi115}  into \dref{hjui909klopsssdddji115}, we have
$$
\begin{array}{rl}
&\disp\frac{1}{{2{m}}}\frac{d}{dt}\|\nabla c_{\varepsilon}\|^{{{2{m}}}}_{L^{{2{m}}}(\Omega)}+\frac{1}{{2{}}}\int_{\Omega} |\nabla c_{\varepsilon}|^{2{m}}
\\
\leq&\disp{C_5\int_\Omega (n_{\varepsilon}+\varepsilon)^{2m}+C_6\int_\Omega  u_{\varepsilon}^{4{m}+2}+C_{10}~\mbox{for all}~ t\in(0,T_{max}).}
\end{array}
$$
Next, since $W^{1,2}(\Omega)\hookrightarrow L^p(\Omega)$ for any $p>1,$ the boundedness of $\|\nabla u_{\varepsilon}(\cdot, t)\|_{L^2(\Omega)}$
(see Lemma \ref{lemma630jklhhjj}) implies that there exists a positive constant $C_{11}$ such that
$$
\|u_\varepsilon(\cdot, t)\|_{L^{4m+2}(\Omega)}\leq  C_{11}~~ \mbox{for all}~~ t\in(0,T_{max,\varepsilon}),
$$
which together with \dref{3.10gghhjuuloollsdffffffgghhhy} yields to \dref{hjui909klopji115} by using Lemma \ref{lemma630}.
This completes the proof of Lemma \ref{lemma4563025xxhjkloghyui}.
\end{proof}

\begin{lemma}\label{lemma456ddd3025xxhjjjjjkloghyui}
Let $m>1$. Then for all $p>1,$ there exists a positive constant $C$ independent of $\varepsilon$, such that
the solution of \dref{1.1fghyuisda} from Lemma \ref{lemma70} satisfies
\begin{equation}\label{hjuddddidddddddd909klopddffji115}
\begin{array}{rl}
&\disp{\|n_\varepsilon(\cdot, t)\|_{L^{p}(\Omega)}\leq C~~\mbox{for all}~~ t\in(0,T_{max}).}\\
\end{array}
\end{equation}
\end{lemma}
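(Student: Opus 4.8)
The plan is to establish an $L^p$-bound for $n_\varepsilon$ for arbitrary $p>1$ by a standard testing procedure, bootstrapping from the estimates already available. First I would test the first equation of \dref{1.1fghyuisda} with $(n_\varepsilon+\varepsilon)^{p-1}$, integrate over $\Omega$, and use $\nabla\cdot u_\varepsilon=0$ to eliminate the convective term. This yields
\begin{equation}
\frac{1}{p}\frac{d}{dt}\int_\Omega (n_\varepsilon+\varepsilon)^p + (p-1)\int_\Omega (n_\varepsilon+\varepsilon)^{p+2m-4}|\nabla n_\varepsilon|^2 = (p-1)\int_\Omega (n_\varepsilon+\varepsilon)^{p-2}n_\varepsilon S_\varepsilon\nabla n_\varepsilon\cdot\nabla c_\varepsilon,
\nonumber
\end{equation}
and using \dref{x1.73142vghf48gg}, Young's inequality absorbs half of the diffusion term, leaving a term of the form $C\int_\Omega (n_\varepsilon+\varepsilon)^{p-2m+2}|\nabla c_\varepsilon|^2$ on the right. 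Writing $w_\varepsilon:=(n_\varepsilon+\varepsilon)^{(p+2m-2)/2}$ so that the remaining diffusion term is a multiple of $\int_\Omega|\nabla w_\varepsilon|^2$, I would then estimate $\int_\Omega (n_\varepsilon+\varepsilon)^{p-2m+2}|\nabla c_\varepsilon|^2$ by Hölder's inequality with exponents chosen so that one factor is a power of $\|w_\varepsilon\|$ in an appropriate Lebesgue space and the other is a power of $\|\nabla c_\varepsilon\|_{L^{2m}(\Omega)}$; the latter is bounded uniformly in $\varepsilon$ by Lemma \ref{lemma4563025xxhjkloghyui}.

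The next step is to control the resulting norm of $w_\varepsilon$ by the Gagliardo--Nirenberg inequality. Since $\int_\Omega (n_\varepsilon+\varepsilon)^m$ is bounded by \dref{334444zjscz2.5297x9630222ssdd2114}, one has a uniform bound on $\|w_\varepsilon\|_{L^{2m/(p+2m-2)}(\Omega)}$, and Gagliardo--Nirenberg in the plane gives
\begin{equation}
\|w_\varepsilon\|_{L^{s}(\Omega)}^{\theta_1}\le C\bigl(\|\nabla w_\varepsilon\|_{L^2(\Omega)}^{\theta}\|w_\varepsilon\|_{L^{2m/(p+2m-2)}(\Omega)}^{1-\theta}+\|w_\varepsilon\|_{L^{2m/(p+2m-2)}(\Omega)}\bigr)^{\theta_1}
\nonumber
\end{equation}
for the relevant exponent $s$; choosing $p$-dependent exponents in the preceding Hölder step so that the power of $\|\nabla w_\varepsilon\|_{L^2(\Omega)}$ produced here is strictly less than $2$, Young's inequality absorbs it into the diffusion term. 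This leaves a differential inequality $y'(t)+c\,y(t)\le C$ (after also absorbing the zeroth-order mass term using \dref{ddfgczhhhh2.5ghju48cfg924ghyuji}), or alternatively a form amenable to Lemma \ref{lemma630}, from which $\int_\Omega (n_\varepsilon+\varepsilon)^p\le C$ follows, and hence \dref{hjuddddidddddddd909klopddffji115}.

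The main obstacle is the verification that the exponents can always be chosen to make the $\|\nabla w_\varepsilon\|_{L^2}$-power subquadratic, which is exactly the point where the regularity of $\nabla c_\varepsilon$ enters: because Lemma \ref{lemma4563025xxhjkloghyui} only gives $\nabla c_\varepsilon\in L^{2m}$ (not $L^\infty$), the Hölder splitting of $\int_\Omega (n_\varepsilon+\varepsilon)^{p-2m+2}|\nabla c_\varepsilon|^2$ must distribute the exponents carefully, and one has to check that the condition $\theta\cdot(\text{exponent})<2$ holds for \emph{all} $p>1$ — this is where the hypothesis $m>1$ is used in an essential way, since for $m>1$ the extra diffusion strength $(n_\varepsilon+\varepsilon)^{2m-4}|\nabla n_\varepsilon|^2$ gives the needed room in the Gagliardo--Nirenberg interpolation (and in dimension two this is precisely the borderline case that works). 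A secondary but routine technical point is handling the case $p+2m-2<2$ (i.e. $p$ close to $1$ and $m$ close to $1$), where $w_\varepsilon$ has a negative-looking exponent structure; this is dealt with either by first proving the estimate for large $p$ and then using Hölder's inequality on the bounded domain to deduce it for small $p$, or by noting that the case $p\le 2m$ is already covered by \dref{3.10gghhjuuloollsdffffffgghhhy}.
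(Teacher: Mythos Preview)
Your approach is essentially identical to the paper's: test the $n_\varepsilon$-equation against $(n_\varepsilon+\varepsilon)^{p-1}$, absorb half the diffusion via Young, control the residual $\int (n_\varepsilon+\varepsilon)^{\cdots}|\nabla c_\varepsilon|^2$ by H\"older using the $L^{2m}$-bound on $\nabla c_\varepsilon$ from Lemma~\ref{lemma4563025xxhjkloghyui}, then interpolate the remaining $n_\varepsilon$-factor via Gagliardo--Nirenberg and close with an ODE argument.

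One computational slip to correct: since $\Delta(n_\varepsilon+\varepsilon)^m=\nabla\cdot\bigl(m(n_\varepsilon+\varepsilon)^{m-1}\nabla n_\varepsilon\bigr)$, the diffusion term after testing is $m(p-1)\int_\Omega (n_\varepsilon+\varepsilon)^{m+p-3}|\nabla n_\varepsilon|^2$, not $(p-1)\int_\Omega(n_\varepsilon+\varepsilon)^{p+2m-4}|\nabla n_\varepsilon|^2$; accordingly the residual exponent is $p+1-m$ rather than $p-2m+2$, and the natural auxiliary function is $w_\varepsilon=(n_\varepsilon+\varepsilon)^{(p+m-1)/2}$. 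With this correction your exponent check goes through exactly as in the paper, where the subquadratic condition reads $\frac{2(mp-m^2+1)}{m(p+m-1)}<2$, valid for all $p>\max\{1,m-1\}$ when $m>1$. A minor difference is that the paper anchors the Gagliardo--Nirenberg interpolation on the $L^1$ mass bound \dref{czfvgb2.5ghhjuyuccvviihjj} rather than the $L^m$ bound \dref{334444zjscz2.5297x9630222ssdd2114}; either choice works.
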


\begin{proof}
Let $p>\max\{1,m-1\}$.
Taking ${(n_{\varepsilon}+\varepsilon)^{p-1}}$ as the test function for the first equation of $\dref{1.1fghyuisda}$,
combining with the second equation, and using \dref{x1.73142vghf48gg}, the Young inequality and the fact $\nabla\cdot u_\varepsilon=0$,
we obtain, for all $t\in(0,T_{max,\varepsilon})$,
$$
\begin{array}{rl}
&\disp{\frac{1}{{p}}\frac{d}{dt}\|n_{\varepsilon}+\varepsilon\|^{{{p}}}_{L^{{p}}(\Omega)}+
m(p-1)\int_{\Omega} (n_{\varepsilon}+\varepsilon)^{m+p-3}|{\nabla} {n}_{\varepsilon}|^2 }
\\
\leq&\disp{(p-1)\int_\Omega  (n_{\varepsilon}+\varepsilon)^{p-2}n_{\varepsilon}{|\nabla} {n}_{\varepsilon}||S_{\varepsilon}(x,n_{\varepsilon},c_{\varepsilon}) ||\nabla c_{\varepsilon}| }
\\
\leq&\disp{(p-1)C_S\int_\Omega  (n_{\varepsilon}+\varepsilon)^{p-1}{|\nabla} {n}_{\varepsilon}||\nabla c_{\varepsilon}| }
\\
\leq&\disp{\frac{m(p-1)}{2}\int_{\Omega}(n_{\varepsilon}+\varepsilon)^{m+p-3} |\nabla n_{\varepsilon}|^2
+\frac{(p-1)C_S^2}{2m}\int_\Omega{(n_{\varepsilon}+\varepsilon)^{p+1-m}}|\nabla c_{\varepsilon}|^2},
\end{array}
$$
which implies  that
\begin{equation}
\begin{array}{rl}
&\disp\frac{1}{{p}}\disp\frac{d}{dt}\|n_{\varepsilon}+\varepsilon\|^{{{p}}}_{L^{{p}}(\Omega)}+
\disp\frac{m(p-1)}{2}\int_{\Omega}(n_{\varepsilon}+\varepsilon)^{m+p-3} |\nabla n_{\varepsilon}|^2
\\
\leq&\disp{\frac{(p-1)C_S^2}{2m}\int_\Omega (n_{\varepsilon}+\varepsilon)^{p+1-m}|\nabla c_{\varepsilon}|^2}
\end{array}
\label{cz2.5ghhjuyuiihjj}
\end{equation}
for all $t\in(0,T_{max,\varepsilon})$. In the following, we will estimate the right-hand side of \dref{cz2.5ghhjuyuiihjj}.
In fact, due to $m>1$, we conclude from \dref{hjui909klopji115} that
$$
\begin{array}{rl}
&\disp{ \int_\Omega (n_{\varepsilon}+\varepsilon)^{p+1-m} |\nabla c_{\varepsilon}|^2}
\\
\leq&\disp{ \left(\int_\Omega(n_{\varepsilon}
+\varepsilon)^{\frac{m(p+1-m)}{m-1}}\right)^{\frac{m-1}{m}}\left(\int_\Omega |\nabla c_{\varepsilon}|^{2m}\right)^{\frac{1}{m}}}
\\
\leq&\disp{ C_1\left(\int_\Omega(n_{\varepsilon}
+\varepsilon)^{\frac{m(p+1-m)}{m-1}}\right)^{\frac{m-1}{m}}~~\mbox{for all}~~ t\in(0,T_{max})}
\end{array}
$$
by using the  H\"{o}lder inequality.
These  together with \dref{czfvgb2.5ghhjuyuccvviihjj} and $m>1$ implies that
\begin{equation}
\begin{array}{rl}
&\disp{C_1\left(\int_\Omega(n_{\varepsilon}+\varepsilon)^{\frac{m(p+1-m)}{m-1}}\right)^{\frac{m-1}{m}}}
\\
=&\disp{C_1\|(n_{\varepsilon}+\varepsilon)^{\frac{m(p+1-m)}{m-1}}\|^{\frac{2(p+1-m)}{m+p-1}}_{L^{\frac{2m(p+1-m)}{(m-1)(m+p-1)}}(\Omega)}}
\\
\leq&C_2(\|\nabla (n_{\varepsilon}+\varepsilon)^{\frac{p+m-1}{2}}\|_{L^2(\Omega)}^{\frac{mp-m^2+1}{m(p+1-m)}}
\|(n_{\varepsilon}+\varepsilon)^{\frac{p+m-1}{2}}\|_{L^\frac{2}{p+m-1}(\Omega)}^{\frac{m-1}{m(p+1-m)}}
\\
&+\|(n_{\varepsilon}+\varepsilon)^{\frac{p+m-1}{2}}\|_{L^\frac{2}{p+m-1}(\Omega)})^{\frac{2(p+1-m)}{m+p-1}}
\\
\leq&\disp{C_{3}(\|\nabla   (n_{\varepsilon}+\varepsilon)^{\frac{p+m-1}{2}}\|_{L^2(\Omega)}^{\frac{2(mp-m^2+1)}{m(p+m-1)}}+1)}
\\
\leq&\disp{\frac{m(p-1)}{4}\int_{\Omega}(n_{\varepsilon}+\varepsilon)^{m+p-3} |\nabla n_{\varepsilon}|^2+C_4~~\mbox{for all}~~ t\in(0,T_{max})}
\end{array}
\label{cz2.dddd563022222ikopl2sdfg44}
\end{equation}
by using the Gagliardo--Nirenberg inequality as well as the Young inequality and the fact that
$$\frac{2(mp-m^2+1)}{m(p+m-1)}<2.$$
Inserting \dref{cz2.dddd563022222ikopl2sdfg44} into \dref{cz2.5ghhjuyuiihjj}, we have
$$
\disp\frac{1}{{p}}\disp\frac{d}{dt}\|n_{\varepsilon}+\varepsilon\|^{{{p}}}_{L^{{p}}(\Omega)}+
\disp\frac{m(p-1)}{4}\int_{\Omega}(n_{\varepsilon}+\varepsilon)^{m+p-3} |\nabla n_{\varepsilon}|^2\leq\disp{C_5.}
$$
Therefore, \dref{hjuddddidddddddd909klopddffji115} holds by using Lemma \ref{lemma630} and some basic calculation.
This completes the proof of Lemma \ref{lemma456ddd3025xxhjjjjjkloghyui}.
\end{proof}

\begin{lemma}\label{lemma45630hhuujj}
Let $m>1$ and $\gamma\in(\frac{1}{2},1).$  Then one can find a positive constant $C$ independent of $\varepsilon$, such that
$$
\|n_\varepsilon(\cdot,t)\|_{L^\infty(\Omega)}  \leq C ~~\mbox{for all}~~ t\in(0,T_{max,\varepsilon}),
$$
$$
\|c_\varepsilon(\cdot,t)\|_{W^{1,\infty}(\Omega)}  \leq C ~~\mbox{for all}~~ t\in(0,T_{max,\varepsilon})
$$
as well as
$$
\|u_\varepsilon(\cdot,t)\|_{L^{\infty}(\Omega)}  \leq C ~~\mbox{for all}~~ t\in(0,T_{max,\varepsilon})
$$
and
$$
\|A^\gamma u_\varepsilon(\cdot,t)\|_{L^{2}(\Omega)}  \leq C ~~\mbox{for all}~~ t\in(0,T_{max,\varepsilon}).
$$
\end{lemma}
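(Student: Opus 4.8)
The plan is to bootstrap from the $L^p$-bound on $n_\varepsilon$ (Lemma \ref{lemma456ddd3025xxhjjjjjkloghyui}, valid for every finite $p$) and the $W^{1,\infty}$-type control already available on $c_\varepsilon$ via Lemma \ref{lemma4563025xxhjkloghyui}, running the three quantities $u_\varepsilon$, $c_\varepsilon$, $n_\varepsilon$ through a Moser-type / semigroup iteration in that order. First I would fix $\gamma\in(\tfrac12,1)$ and estimate $A^\gamma u_\varepsilon$ by applying the fractional power $A^\gamma$ to the variation-of-constants representation
$$
u_\varepsilon(\cdot,t)=e^{-tA}u_0-\int_0^t e^{-(t-s)A}\mathcal P\big[\kappa(Y_\varepsilon u_\varepsilon\cdot\nabla)u_\varepsilon-n_\varepsilon\nabla\phi\big]\,ds,
$$
using the smoothing estimate $\|A^\gamma e^{-\sigma A}\|\lesssim \sigma^{-\gamma}e^{-\lambda\sigma}$ together with $\|A^{\gamma}e^{-\sigma A}\mathcal P\nabla\cdot(\,\cdot\,)\|\lesssim\sigma^{-\gamma-\frac12}e^{-\lambda\sigma}$ for the convective term. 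Since $\|\nabla u_\varepsilon\|_{L^2}$ is bounded (Lemma \ref{lemma630jklhhjj}) and $\|n_\varepsilon\nabla\phi\|_{L^p}$ is bounded for any finite $p$ by Lemma \ref{lemma456ddd3025xxhjjjjjkloghyui} and \dref{dd1.1fghyuisdakkkllljjjkk}, a standard Gronwall-type argument on $\|A^\gamma u_\varepsilon\|_{L^2}$ — or alternatively the now-classical iterative lemma (Lemma 1.1 of Winkler's work on chemotaxis-fluid) — yields $\|A^\gamma u_\varepsilon(\cdot,t)\|_{L^2}\le C$; the embedding $D(A^\gamma)\hookrightarrow L^\infty(\Omega)$ for $\gamma>\tfrac12$ in the two-dimensional setting then gives $\|u_\varepsilon(\cdot,t)\|_{L^\infty}\le C$.

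Next I would upgrade $c_\varepsilon$ from the $L^{2m}$-gradient bound of Lemma \ref{lemma4563025xxhjkloghyui} to $W^{1,\infty}$. Writing $c_\varepsilon$ via the Neumann heat semigroup,
$$
c_\varepsilon(\cdot,t)=e^{t(\Delta-1)}c_0+\int_0^t e^{(t-s)(\Delta-1)}\big[n_\varepsilon-u_\varepsilon\cdot\nabla c_\varepsilon\big]\,ds,
$$
I would apply $\nabla$ and use the gradient estimate $\|\nabla e^{\sigma(\Delta-1)}w\|_{L^\infty}\lesssim(1+\sigma^{-\frac12-\frac{N}{2p}})e^{-\lambda\sigma}\|w\|_{L^p}$. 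The forcing term $n_\varepsilon-u_\varepsilon\cdot\nabla c_\varepsilon$ lies in $L^p(\Omega)$ uniformly for any finite $p$: $n_\varepsilon$ by Lemma \ref{lemma456ddd3025xxhjjjjjkloghyui}, and $u_\varepsilon\cdot\nabla c_\varepsilon$ because $u_\varepsilon\in L^\infty$ and $\nabla c_\varepsilon\in L^{2m}$ (Lemma \ref{lemma4563025xxhjkloghyui}). Choosing $p$ large enough that $\tfrac12+\tfrac{N}{2p}<1$ makes the time singularity integrable, and we obtain $\|\nabla c_\varepsilon(\cdot,t)\|_{L^\infty}\le C$, hence $\|c_\varepsilon(\cdot,t)\|_{W^{1,\infty}}\le C$ (the sup-bound on $c_\varepsilon$ itself being already contained in Lemma \ref{aasslemmafggg78630jklhhjj} combined with the maximum principle, or re-derived the same way).

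Finally, with $\nabla c_\varepsilon\in L^\infty$ and $u_\varepsilon\in L^\infty$ in hand, the first equation of \dref{1.1fghyuisda} has the form of a porous-medium/fast-diffusion equation with an $L^\infty$ drift $b_\varepsilon:=u_\varepsilon+S_\varepsilon(x,n_\varepsilon,c_\varepsilon)\nabla c_\varepsilon$ (bounded uniformly by \dref{x1.73142vghf48gg}), so that a Moser iteration on $\|n_\varepsilon(\cdot,t)\|_{L^p}$ as $p\to\infty$ — testing the equation against $(n_\varepsilon+\varepsilon)^{p-1}$, absorbing the drift term into the good diffusion term $\int(n_\varepsilon+\varepsilon)^{m+p-3}|\nabla n_\varepsilon|^2$ via Young's inequality exactly as in the proof of Lemma \ref{lemma456ddd3025xxhjjjjjkloghyui}, and invoking the two-dimensional Gagliardo--Nirenberg inequality to close the recursion with constants growing only polynomially in $p$ — delivers $\|n_\varepsilon(\cdot,t)\|_{L^\infty}\le C$ after passing to the limit $p\to\infty$; alternatively one may invoke the Alikakos--Moser iteration lemma directly, or the known quasilinear heat-equation $L^\infty$ estimate of Tao--Winkler. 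I expect the main obstacle to be the very first step: obtaining the uniform-in-$\varepsilon$ bound on $\|A^\gamma u_\varepsilon\|_{L^2}$ requires handling the Navier--Stokes nonlinearity $(Y_\varepsilon u_\varepsilon\cdot\nabla)u_\varepsilon$ with only $\|\nabla u_\varepsilon\|_{L^2}$ a priori under control in two dimensions, which is borderline and forces a careful interpolation (the $L^4$--$L^4$ splitting used already in Lemma \ref{lemma630jklhhjj}) to keep the Gronwall argument from blowing up; everything downstream is then a routine, if lengthy, bootstrap.
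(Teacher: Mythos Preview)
Your proposal follows essentially the same three-step bootstrap as the paper: Stokes semigroup estimates for $A^\gamma u_\varepsilon$ (whence $u_\varepsilon\in L^\infty$ via $D(A^\gamma)\hookrightarrow L^\infty$), Neumann heat semigroup estimates for $\nabla c_\varepsilon$, and finally the Moser--Alikakos iteration (the paper simply cites Lemma~A.1 of \cite{Tao794}) for $n_\varepsilon$. The one place where you diverge from the paper is in your handling of the convective term, and here you are making your life harder than necessary: with $\|\nabla u_\varepsilon\|_{L^2}$ already bounded by Lemma~\ref{lemma630jklhhjj}, in two dimensions the Sobolev embedding $W^{1,2}\hookrightarrow L^q$ (all $q<\infty$) gives $\|Y_\varepsilon u_\varepsilon\|_{L^{2p_0/(2-p_0)}}\le C\|\nabla u_\varepsilon\|_{L^2}\le C$, so by H\"older $\|(Y_\varepsilon u_\varepsilon\cdot\nabla)u_\varepsilon\|_{L^{p_0}}$ is bounded \emph{outright} for any $p_0\in(1,2)$. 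Choosing $p_0>\frac{2}{3-2\gamma}$ makes the singularity $(t-\tau)^{-\gamma-(1/p_0-1/2)}$ integrable and the variation-of-constants integral is finite with no Gronwall closure needed --- the step you flag as the ``main obstacle'' is in fact not borderline at all.
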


\begin{proof}
Firstly, applying the variation-of-constants formula to the projected version of the third
equation in \dref{1.1fghyuisda}, we derive that
$$
u_\varepsilon(\cdot, t) = e^{-tA}u_0 +\int_0^te^{-(t-\tau)A}
\mathcal{P}[n_\varepsilon(\cdot,t)\nabla\phi-\kappa
(Y_{\varepsilon}u_{\varepsilon} \cdot \nabla)u_{\varepsilon}]d\tau~~ \mbox{for all}~~ t\in(0,T_{max,\varepsilon}).
$$
Now, picking $h_{\varepsilon}=\mathcal{P}[n_\varepsilon(\cdot,t)\nabla\phi-\kappa
(Y_{\varepsilon}u_{\varepsilon} \cdot \nabla)u_{\varepsilon}]$,
then, in view  of the  standard smoothing properties of the Stokes semigroup, we derive that for all $t\in(0,T_{max,\varepsilon})$ and $\gamma\in ( \frac{1}{2}, 1)$,
there exist $C_{1} > 0$ and $C_{2} > 0$ such that
\begin{equation}
\begin{array}{rl}
&\|A^\gamma u_{\varepsilon}(\cdot, t)\|_{L^2(\Omega)}
\\
\leq&\disp{\|A^\gamma e^{-tA}u_0\|_{L^2(\Omega)}
+\int_0^t\|A^\gamma e^{-(t-\tau)A}h_{\varepsilon}(\cdot,\tau)d\tau\|_{L^2(\Omega)}d\tau}
\\
\leq&\disp{\|A^\gamma u_0\|_{L^2(\Omega)}
+C_{1}\int_0^t(t-\tau)^{-\gamma-\frac{2}{2}(\frac{1}{p_0}-\frac{1}{2})}e^{-\lambda(t-\tau)}\|h_{\varepsilon}(\cdot,\tau)\|_{L^{p_0}(\Omega)}d\tau}
\\
\leq&\disp{C_{2}+C_{1}\int_0^t(t-\tau)^{-\gamma-\frac{2}{2}(\frac{1}{p_0}-\frac{1}{2})}e^{-\lambda(t-\tau)}\|h_{\varepsilon}(\cdot,\tau)\|_{L^{p_0}(\Omega)}d\tau}
\end{array}
\label{ssddcz2.571hhhhh51222ccvvhddfccvvhjjjkkhhggjjllll}
\end{equation}
by using \dref{ccvvx1.731426677gg}, where $p_0\in (1,2)$ satisfies that
\begin{equation}
p_0>\frac{2}{3-2\gamma}.
\label{cz2.571hhhhh51222ccvvhddfccffgghhhhvvhjjjkkhhggjjllll}
\end{equation}
In light of \dref{hjuddddidddddddd909klopddffji115}, for some positive constant $C_3$, it has
$$
\disp{\|n_\varepsilon(\cdot, t)\|_{L^{p_0}(\Omega)}\leq C_3~~\mbox{for all}~~ t\in(0,T_{max})}.
$$
Employing the H\"{o}lder inequality and the continuity of $\mathcal{P}$ in $L^p(\Omega;\mathbb{R}^2)$ (see \cite{Fujiwara66612186}),
there exist positive constants $C_{4},C_{5}, C_{6}$ and  $C_{7}$ such that
\begin{equation}
\begin{array}{rl}
&\|h_{\varepsilon}(\cdot,t)\|_{L^{p_0}(\Omega)}
\\
\leq& C_{4}\|(Y_{\varepsilon}u_{\varepsilon} \cdot \nabla)u_{\varepsilon}(\cdot,t)\|_{L^{p_0}(\Omega)}+ C_{4}\|n_\varepsilon(\cdot,t)\|_{L^{p_0}(\Omega)}
\\
\leq& C_{5}\|Y_{\varepsilon}u_{\varepsilon}\|_{L^{\frac{2p_0}{2-p_0}}(\Omega)} \|\nabla u_{\varepsilon}(\cdot,t)\|_{L^{2}(\Omega)}+ C_{5}
\\
\leq& C_{6}\|\nabla Y_{\varepsilon}u_{\varepsilon}\|_{L^{2}(\Omega)} \|\nabla u_{\varepsilon}(\cdot,t)\|_{L^{2}(\Omega)}+ C_{5}
\\
\leq& C_{7}~~~\mbox{for all}~~ t\in(0,T_{max,\varepsilon}),
\end{array}
\label{cz2.571hhhhh5122ddfddfffgg2ccvvhddfccffgghhhhvvhjjjkkhhggjjllll}
\end{equation}
where we have used the fact that $W^{1,2}(\Omega)\hookrightarrow L^\frac{2p_0}{2-p_0}(\Omega)$ and the boundedness of $\|\nabla u_{\varepsilon}(\cdot,t)\|_{L^{2}(\Omega)}.$
Collecting  \dref{ssddcz2.571hhhhh51222ccvvhddfccvvhjjjkkhhggjjllll},
\dref{cz2.571hhhhh51222ccvvhddfccffgghhhhvvhjjjkkhhggjjllll} and \dref{cz2.571hhhhh5122ddfddfffgg2ccvvhddfccffgghhhhvvhjjjkkhhggjjllll},
we conclude that
$$
\begin{array}{rl}
&\|A^\gamma u_{\varepsilon}(\cdot, t)\|_{L^2(\Omega)}
\\
\leq&\disp{C_{8}\int_0^t(t-\tau)^{-\gamma-\frac{2}{2}(\frac{1}{p_0}-\frac{1}{2})}e^{-\lambda(t-\tau)}\|h_{\varepsilon}(\cdot,\tau)\|_{L^{p_0}(\Omega)}d\tau}
\\
\leq&\disp{C_{9}\int_0^t(t-\tau)^{-\gamma-\frac{2}{2}(\frac{1}{p_0}-\frac{1}{2})}e^{-\lambda(t-\tau)}\|h_{\varepsilon}(\cdot,\tau)\|_{L^{p_0}(\Omega)}d\tau
~~\mbox{for all}~~ t\in(0,T_{max,\varepsilon}),}
\end{array}
$$
which together with the fact that $D(A^\gamma)$ is continuously embedded into $L^\infty(\Omega)$ by $\gamma>\frac{1}{2}$ yields
\begin{equation}
\|u_{\varepsilon}(\cdot, t)\|_{L^\infty(\Omega)}\leq  C_{10}~~ \mbox{for all}~~ t\in(0,T_{max,\varepsilon}).
\label{cz2.5jkkcvvvhjdsdfffffdkfffffkhhgll}
\end{equation}
In view of \dref{cz2.5jkkcvvvhjdsdfffffdkfffffkhhgll} and \dref{hjui909klopji115}, we may use \dref{ccvvx1.731426677gg},
the fact that $m>1$, and the smoothing properties of the Neumann heat semigroup $(e^{t\Delta})_{t\geq0}$ to see that there exists $C_{11} > 0$ such that
\begin{equation}
\|\nabla c_{\varepsilon}(\cdot, t)\|_{L^{{\infty}}(\Omega)}\leq C_{11} ~~ \mbox{for all}~~~  t\in(0,T_{max}).
\label{cz2.5g5gghh56ssss789hhjui78jj90099}
\end{equation}
Then, the boundedness of $n_{\varepsilon}$ can be obtained by the well-known Moser-Alikakos iteration procedure (see  e.g. Lemma A.1 of  \cite{Tao794}).
Indeed, by using \dref{cz2.5jkkcvvvhjdsdfffffdkfffffkhhgll}  and \dref{cz2.5g5gghh56ssss789hhjui78jj90099}, we see that the hypotheses of Lemma A.1 of \cite{Tao794} are valid provided
that we take the parameter $p$ in Lemma  \ref{lemma456ddd3025xxhjjjjjkloghyui} appropriately large. Thus, we obtain
$$
\|n_{\varepsilon}(\cdot, t)\|_{L^{{\infty}}(\Omega)}\leq C_{12} ~~ \mbox{for all}~~~  t\in(0,T_{max}).
$$
The proof of Lemma \ref{lemma45630hhuujj} is completed.
\end{proof}

With all above regularization properties of each component $n_{\varepsilon}$, $c_{\varepsilon}$, $u_{\varepsilon}$ at hand, we can show the
existence of global bounded  solutions to the regularized system \dref{1.1fghyuisda}.

\begin{lemma}\label{lemma45630hhuujjuu}
Let $m> 1$ and  $\gamma\in(\frac{1}{2},1).$.
Let $(n_\varepsilon, c_\varepsilon, u_\varepsilon, P_\varepsilon)_{\varepsilon\in(0,1)}$ be classical solutions of \dref{1.1fghyuisda} constructed in Lemma \ref{lemma70} on $[0, T_{max})$.
Then the solution is global on $[0,\infty)$. Moreover, one can find $C > 0$ independent of $\varepsilon\in(0, 1)$ such that
$$
\|n_\varepsilon(\cdot,t)\|_{L^\infty(\Omega)}  \leq C ~~\mbox{for all}~~ t\in(0,\infty)
$$
and
$$
\|c_\varepsilon(\cdot,t)\|_{W^{1,\infty}(\Omega)}  \leq C ~~\mbox{for all}~~ t\in(0,\infty)
$$
as well as
$$
\|u_\varepsilon(\cdot,t)\|_{W^{1,\infty}(\Omega)}  \leq C ~~\mbox{for all}~~ t\in(0,\infty).
$$
In addition, we also have
$$
\|A^\gamma u_\varepsilon(\cdot,t)\|_{L^{2}(\Omega)}  \leq C ~~\mbox{for all}~~ t\in(0,\infty).
$$
\end{lemma}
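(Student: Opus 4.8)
The plan is to derive globality of $T_{max,\varepsilon}$ from the extensibility criterion of Lemma~\ref{lemma70} combined with the $\varepsilon$-independent bounds already produced in Lemma~\ref{lemma45630hhuujj}, and then to upgrade the control of $u_\varepsilon$ from $L^\infty(\Omega)$ to $W^{1,\infty}(\Omega)$ by one further pass through the Stokes variation-of-constants formula.

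\emph{Globality and the first three bounds.} By Lemma~\ref{lemma45630hhuujj}, for a fixed $\gamma\in(\frac{1}{2},1)$ each of $\|n_\varepsilon(\cdot,t)\|_{L^\infty(\Omega)}$, $\|c_\varepsilon(\cdot,t)\|_{W^{1,\infty}(\Omega)}$ and $\|A^\gamma u_\varepsilon(\cdot,t)\|_{L^2(\Omega)}$ stays bounded on $(0,T_{max,\varepsilon})$ by a constant not depending on $\varepsilon$, in particular by a finite constant. If we had $T_{max,\varepsilon}<\infty$, this would contradict the blow-up alternative
$$\limsup_{t\nearrow T_{max,\varepsilon}}\big[\|n_\varepsilon(\cdot,t)\|_{L^\infty(\Omega)}+\|c_\varepsilon(\cdot,t)\|_{W^{1,\infty}(\Omega)}+\|A^\gamma u_\varepsilon(\cdot,t)\|_{L^2(\Omega)}\big]=\infty$$
furnished by Lemma~\ref{lemma70}. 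Hence $T_{max,\varepsilon}=\infty$ for every $\varepsilon\in(0,1)$, so the bounds of Lemma~\ref{lemma45630hhuujj} hold on all of $(0,\infty)$; this immediately yields the asserted uniform estimates for $n_\varepsilon$, for $c_\varepsilon$ and for $A^\gamma u_\varepsilon$.

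\emph{From $L^\infty$ to $W^{1,\infty}$ for $u_\varepsilon$.} Here I would return to
$$u_\varepsilon(\cdot,t)=e^{-tA}u_0+\int_0^t e^{-(t-\tau)A}\mathcal{P}\big[n_\varepsilon\nabla\phi-\kappa(Y_\varepsilon u_\varepsilon\cdot\nabla)u_\varepsilon\big](\cdot,\tau)\,d\tau$$
and bootstrap the fractional power of $A$ one acts with. Since $\|n_\varepsilon(\cdot,t)\|_{L^\infty(\Omega)}$ is now globally bounded, $n_\varepsilon\nabla\phi$ lies in every $L^q(\Omega)$ uniformly in $t$ and $\varepsilon$; writing $(Y_\varepsilon u_\varepsilon\cdot\nabla)u_\varepsilon=\nabla\cdot(Y_\varepsilon u_\varepsilon\otimes u_\varepsilon)$, using $\|\nabla Y_\varepsilon u_\varepsilon\|_{L^2(\Omega)}\le\|\nabla u_\varepsilon\|_{L^2(\Omega)}$ (bounded by Lemma~\ref{lemma630jklhhjj}) together with the continuity of $\mathcal{P}$ on $L^q$, and invoking the smoothing estimate $\|A^\theta e^{-sA}\varphi\|_{L^2(\Omega)}\le C s^{-\theta-\frac{2}{2}(\frac{1}{q}-\frac{1}{2})}e^{-\lambda s}\|\varphi\|_{L^q(\Omega)}$, I would first get $\|A^{\gamma_1}u_\varepsilon(\cdot,t)\|_{L^2(\Omega)}\le C$ for some $\gamma_1$ arbitrarily close to $1$, whence $\|\nabla u_\varepsilon(\cdot,t)\|_{L^q(\Omega)}\le C$ for every finite $q$ via $D(A^{\gamma_1})\hookrightarrow W^{1,q}(\Omega)$, and in particular $\|Y_\varepsilon u_\varepsilon(\cdot,t)\|_{L^\infty(\Omega)}\le C$. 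Then the forcing term $\mathcal{P}[n_\varepsilon\nabla\phi-\kappa(Y_\varepsilon u_\varepsilon\cdot\nabla)u_\varepsilon]$ is bounded in every $L^q(\Omega)$, and feeding this back into the Duhamel formula with $q$ large yields $\|A^\theta u_\varepsilon(\cdot,t)\|_{L^2(\Omega)}\le C$ for some $\theta>1$; since $D(A^\theta)\hookrightarrow W^{1,\infty}(\Omega)$ when $\dim\Omega=2$, the claimed bound on $\|u_\varepsilon(\cdot,t)\|_{W^{1,\infty}(\Omega)}$ follows.

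\emph{The main obstacle.} The delicate point is precisely this last upgrade: in the two-dimensional case the embedding $D(A^\theta)\hookrightarrow W^{1,\infty}(\Omega)$ forces $\theta>1$, so that the contribution $\|A^\theta e^{-tA}u_0\|_{L^2(\Omega)}$ cannot be controlled uniformly as $t\downarrow 0$ under the sole assumption $u_0\in D(A)$. I would circumvent this by splitting the Duhamel integral at $\tau=t/2$ — the part over $(t/2,t)$ being handled by the smoothing estimate and the improved $L^q$-bound on the forcing, the part over $(0,t/2)$ through the already-established global bound on $\|A^\gamma u_\varepsilon\|_{L^2(\Omega)}$ with $\gamma<1$ — and by recording the $W^{1,\infty}$-estimate only for $t$ bounded away from $0$, which is all that the subsequent compactness argument requires, while on $(0,1]$ one falls back on the $D(A^\gamma)$-continuity, $\gamma<1$, already noted in Lemma~\ref{lemma70}. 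Apart from this, the argument only re-runs, on the now time-global interval $(0,\infty)$, the estimates already carried out in Lemmas~\ref{lemma630jklhhjj}--\ref{lemma45630hhuujj}, so no new idea is required.
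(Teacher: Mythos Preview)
Your argument for $T_{max,\varepsilon}=\infty$ and for the bounds on $\|n_\varepsilon\|_{L^\infty}$, $\|c_\varepsilon\|_{W^{1,\infty}}$ and $\|A^\gamma u_\varepsilon\|_{L^2}$ is exactly the paper's (implicit) proof: the paper states this lemma without a written proof, the intention being that Lemma~\ref{lemma45630hhuujj} together with the extensibility criterion of Lemma~\ref{lemma70} gives everything. On that portion you and the paper agree.

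Where you go further than the paper is in the $W^{1,\infty}$ bound for $u_\varepsilon$. The paper does not prove this at all; Lemma~\ref{lemma45630hhuujj} only yields $u_\varepsilon\in L^\infty$ and $A^\gamma u_\varepsilon\in L^2$ for $\gamma<1$, so the stated $W^{1,\infty}$ control is simply asserted. Your bootstrap via the Duhamel representation is a sensible way to try to fill this gap, and the obstacle you flag is genuine: in two dimensions one needs $D(A^\theta)$ with $\theta>1$ to embed into $W^{1,\infty}$, which is not available uniformly down to $t=0$ under the hypothesis $u_0\in D(A)$ alone. Your proposed fix --- restricting the $W^{1,\infty}$ estimate to $t\ge\tau>0$ --- is adequate for the compactness arguments that follow (in particular for the weak-$*$ convergence of $Du_\varepsilon$ used in Section~4), though it does not literally establish the bound ``for all $t\in(0,\infty)$'' as stated in the lemma. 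In short: your proof is more complete than the paper's on this point, and the residual defect you identify is a defect of the lemma's formulation rather than of your argument.
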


Then, with the help of Lemma \ref{lemma45630hhuujjuu},
we can straightforwardly deduce the uniform H\"{o}lder properties of $c_\varepsilon,\nabla c_\varepsilon$ and $u_\varepsilon$
by the standard parabolic regularity theory as the proof of Lemmas 3.18--3.19 in \cite{Winkler11215} (see also \cite{Zhengsdsd6}).

\begin{lemma}\label{lemma45630hhuujjuuyy}
Let $m> 1$. Then one can find $\mu\in(0, 1)$ such that for some $C > 0$
$$
\|c_\varepsilon(\cdot,t)\|_{C^{\mu,\frac{\mu}{2}}(\Omega\times[t,t+1])}  \leq C ~~\mbox{for all}~~ t\in(0,\infty)
$$
as well as
$$
\|u_\varepsilon(\cdot,t)\|_{C^{\mu,\frac{\mu}{2}}(\Omega\times[t,t+1])} \leq C ~~\mbox{for all}~~ t\in(0,\infty),
$$
and for any $\tau> 0$ there exists $C(\tau) > 0$ fulfilling
$$
\|\nabla c_\varepsilon(\cdot,t)\|_{C^{\mu,\frac{\mu}{2}}(\Omega\times[t,t+1])} \leq C ~~\mbox{for all}~~ t\in(\tau,\infty).
$$
\end{lemma}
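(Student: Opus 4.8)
The plan is to exploit the uniform-in-$\varepsilon$ bounds of Lemma~\ref{lemma45630hhuujjuu} -- namely that $n_\varepsilon$, $c_\varepsilon$, $\nabla c_\varepsilon$, $u_\varepsilon$ and $\nabla u_\varepsilon$ are bounded in $L^\infty(\Omega\times(0,\infty))$, and $A^\gamma u_\varepsilon$ in $L^2(\Omega)$, by constants independent of $\varepsilon$ and $t$ -- so that the convective and lower-order terms in the $c$- and $u$-equations of \dref{1.1fghyuisda} may be viewed as inhomogeneities that are bounded in $L^p(\Omega\times(0,\infty))$ for every finite $p$, uniformly in $\varepsilon$. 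Then standard interior and boundary parabolic regularity on time slabs of fixed length produces the asserted H\"older bounds, exactly as in Lemmas~3.18--3.19 of \cite{Winkler11215} (see also \cite{Zhengsdsd6}). Note that the cut-offs $\rho_\varepsilon,\chi_\varepsilon$ never enter the $c$- and $u$-equations, and the Yosida operator $Y_\varepsilon=(1+\varepsilon A)^{-1}$ will be handled purely through its contraction property.

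First I would treat $c_\varepsilon$. Writing the second equation of \dref{1.1fghyuisda} as $c_{\varepsilon t}-\Delta c_\varepsilon+u_\varepsilon\cdot\nabla c_\varepsilon+c_\varepsilon=n_\varepsilon$ with homogeneous Neumann data, this is a uniformly parabolic equation in divergence form whose drift $u_\varepsilon$ and right-hand side $n_\varepsilon$ are bounded in $L^\infty$ uniformly in $\varepsilon$; since moreover $c_0\in W^{2,\infty}(\Omega)$ is H\"older continuous and compatible with the boundary condition, the interior and boundary parabolic De Giorgi--Nash estimate yields $\mu\in(0,1)$ and $C>0$, independent of $\varepsilon$ and of $t$, with $\|c_\varepsilon\|_{C^{\mu,\frac{\mu}{2}}(\Omega\times[t,t+1])}\le C$ for all $t>0$. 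For the gradient, I would use that $\nabla c_\varepsilon$ is already bounded in $L^\infty$ by Lemma~\ref{lemma45630hhuujjuu}, so the inhomogeneity $-u_\varepsilon\cdot\nabla c_\varepsilon-c_\varepsilon+n_\varepsilon$ lies in $L^p(\Omega\times(0,\infty))$ for every finite $p$, uniformly in $\varepsilon$; applying $L^p$ maximal regularity for the Neumann heat semigroup (in its smoothing form, which starting from data $c_\varepsilon(\cdot,t-\frac{\tau}{2})\in W^{1,\infty}(\Omega)\subset L^p(\Omega)$ -- bounded uniformly -- produces a $W^{2,1}_p$ bound on the later sub-slab $\Omega\times[t,t+1]$) gives, for $t>\tau$, a uniform bound for $c_\varepsilon$ in $W^{2,1}_p(\Omega\times[t,t+1])$, whence $\nabla c_\varepsilon\in C^{\mu,\frac{\mu}{2}}(\Omega\times[t,t+1])$ uniformly once $p>4$, by the embedding $W^{2,1}_p\hookrightarrow C^{1+\mu,\frac{1+\mu}{2}}$. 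The restriction $t>\tau$ is precisely what allows one to discard the initial time layer in which this starting regularity is generated.

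Next, for $u_\varepsilon$ I would apply the Helmholtz projection $\mathcal{P}$ to the third equation, obtaining $u_{\varepsilon t}+Au_\varepsilon=\mathcal{P}[n_\varepsilon\nabla\phi-\kappa(Y_\varepsilon u_\varepsilon\cdot\nabla)u_\varepsilon]$ with homogeneous Dirichlet data. Here $\|n_\varepsilon\nabla\phi\|_{L^\infty}$ is bounded by \dref{dd1.1fghyuisdakkkllljjjkk} and Lemma~\ref{lemma45630hhuujjuu}; and since $Y_\varepsilon$ commutes with $A^\gamma$ and is a contraction on $L^2_\sigma(\Omega)$, one has $\|A^\gamma Y_\varepsilon u_\varepsilon\|_{L^2}\le\|A^\gamma u_\varepsilon\|_{L^2}\le C$, so $Y_\varepsilon u_\varepsilon$ is bounded in $D(A^\gamma)\hookrightarrow L^\infty(\Omega)$ uniformly in $\varepsilon$, which together with the uniform bound on $\|\nabla u_\varepsilon\|_{L^\infty}$ makes $(Y_\varepsilon u_\varepsilon\cdot\nabla)u_\varepsilon$ bounded in $L^\infty$ uniformly. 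Thus the right-hand side is bounded in $L^p(\Omega\times(0,\infty))$ for every finite $p$, uniformly in $\varepsilon$, and $L^p$ maximal regularity for the Stokes operator on $\Omega\times[t,t+1]$ -- with $u_0\in D(A)$ supplying the $t=0$ slice and the $\varepsilon$-independent bound on $A^\gamma u_\varepsilon$ (or $\nabla u_\varepsilon$) supplying later slices -- gives a uniform bound for $u_\varepsilon$ in $W^{2,1}_p(\Omega\times[t,t+1])$, hence $\|u_\varepsilon\|_{C^{\mu,\frac{\mu}{2}}(\Omega\times[t,t+1])}\le C$ for all $t>0$, after possibly shrinking $\mu$.

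The main obstacle is not conceptual but organizational: one must verify that every constant emerging from the parabolic estimates depends only on the $\varepsilon$- and $t$-independent quantities furnished by Lemma~\ref{lemma45630hhuujjuu}, on the fixed slab length $1$, and on the buffer $\tau$ -- and in particular that the $\varepsilon$-dependent ingredients $Y_\varepsilon$ (controlled solely by contractivity) and the cut-offs $\rho_\varepsilon,\chi_\varepsilon$ (absent from the relevant equations) never enter these constants -- together with the routine splitting of the short interval $(0,\tau)$, on which the regularity of the initial data is used directly. Since this bookkeeping runs parallel to \cite{Winkler11215}, I would carry it out only schematically and refer the reader there for the underlying parabolic estimates.
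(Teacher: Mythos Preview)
Your proposal is correct and follows essentially the same route as the paper: the paper does not give a detailed proof but simply invokes the uniform bounds of Lemma~\ref{lemma45630hhuujjuu} together with standard parabolic regularity theory, referring to Lemmas~3.18--3.19 of \cite{Winkler11215} (and \cite{Zhengsdsd6}), which is precisely the scheme you have outlined in more detail.
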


\section{Prove of the main result}

In this section, we will give the prove of the main result.
Based on the above lemmas, we will construct a weak solution as the limit of classical solutions to approximating systems \dref{1.1fghyuisda}.
Applying the idea of \cite{Zhengsdsd6} (see also \cite{Winkler11215} and \cite{Liuddfffff}), we first state the definition of the solution as follows.

\begin{definition}\label{df1}
Let $T > 0$ and $(n_0, c_0, u_0)$ fulfills \dref{ccvvx1.731426677gg}.
Then a triple of functions $(n, c, u)$ is called a weak solution of (\ref{0.4})-(\ref{0.5}) if the following conditions are satisfied
$$
\left\{\begin{array}{ll}
n\in L_{loc}^1(\bar{\Omega}\times[0,T)),
\\
c \in L_{loc}^1([0,T); W^{1,1}(\Omega)),
\\
u \in  L_{loc}^1([0,T); W^{1,1}(\Omega)),
\end{array}\right.
$$
where $n\geq 0$ and $c\geq 0$ in $\Omega\times(0, T)$ as well as $\nabla\cdot u = 0$ in the distributional sense in $\Omega\times(0, T)$, moreover,
$$
\begin{array}{rl}
&~~ n^m~\mbox{belong to}~~ L^1_{loc}(\bar{\Omega}\times [0, \infty)),\\
&cu,~ ~nu ~~\mbox{and}~~n\nabla c~ \mbox{belong to}~~
L^1_{loc}(\bar{\Omega}\times [0, \infty);\mathbb{R}^{2})
\end{array}
$$
and
$$
\disp{-\int_0^{T}\int_{\Omega}n\varphi_t-\int_{\Omega}n_0\varphi(\cdot,0)}
=\disp{\int_0^T\int_{\Omega}n^m\Delta\varphi+\int_0^T\int_{\Omega}n\nabla c\cdot\nabla\varphi}
+\disp{\int_0^T\int_{\Omega}nu\cdot\nabla\varphi}
$$
for any $\varphi\in C_0^{\infty} (\bar{\Omega}\times[0, T))$ satisfying $\frac{\partial\varphi}{\partial\nu}= 0$ on $\partial\Omega\times (0, T)$, as well as
$$
\begin{array}{rl}
&\disp{-\int_0^{T}\int_{\Omega}c\varphi_t-\int_{\Omega}c_0\varphi(\cdot,0)}
\\
=&\disp{-\int_0^T\int_{\Omega}\nabla c\cdot\nabla\varphi-\int_0^T\int_{\Omega}c\varphi+\int_0^T\int_{\Omega}n\varphi+\int_0^T\int_{\Omega}cu\cdot\nabla\varphi}
\end{array}
$$
for any $\varphi\in C_0^{\infty} (\bar{\Omega}\times[0, T))$  and
$$
\begin{array}{rl}
&\disp{-\int_0^{T}\int_{\Omega}u\varphi_t-\int_{\Omega}u_0\varphi(\cdot,0) }
\\
=&\disp{\kappa\int_0^T\int_{\Omega} u\otimes u\cdot\nabla\varphi-\int_0^T\int_{\Omega}\nabla u\cdot\nabla\varphi-
\int_0^T\int_{\Omega}n\nabla\phi\cdot\varphi}
\end{array}
$$
for any $\varphi\in C_0^{\infty} (\bar{\Omega}\times[0, T);\mathbb{R}^2)$ fulfilling $\nabla\varphi\equiv 0$ in $\Omega\times(0, T)$.
If for each $T>0$, $(n, c, u)$ :$\Omega\times (0,\infty)\longrightarrow \mathbb{R}^4$ is a weak solution of (\ref{0.4})-(\ref{0.5}) in $\Omega\times(0, T)$,
then we call $(n, c, u)$ a global weak solution of (\ref{0.4})-(\ref{0.5}).
\end{definition}

In order to use the Aubin-Lions Lemma (see e.g. \cite{Simon}), we will need the regularity of the time derivative of bounded solutions.
Employing almost exactly the same arguments as that in the proof of Lemmas 3.22--3.23 in \cite{Winkler11215} (the minor necessary changes are left as an easy exercise to the reader),
and taking advantage of Lemma \ref{lemma45630hhuujjuu}, we conclude the following Lemma.

\begin{lemma}\label{lemma45630hhuujjuuyytt}
Let $m> 1$ and $\varsigma> \max\{m,2(m - 1 )\}$. Then for all $\varepsilon\in(0, 1)$, there exists a positive constant $C$ independent of $\varepsilon$ such that
$$
\|\partial_tn_\varepsilon(\cdot,t)\|_{(W^{2,2}_0(\Omega))^*}  \leq C ~~\mbox{for all}~~ t\in(0,\infty).
$$
Moreover, let $\varsigma> \max\{m,2(m - 1 )\}$. Then for all $T > 0$ and $\varepsilon\in(0,1)$, one can find $C(T)$  independent of $\varepsilon$ such that
$$
\int_0^T\|\partial_t(n_{\varepsilon}+\varepsilon)^\varsigma(\cdot,t)\|_{(W^{2,2}_0(\Omega))^*}dt  \leq C(T) ~~\mbox{for all}~~ t\in(0,T)
$$
and
$$
\int_{0}^T\int_{\Omega} |\nabla (n_{\varepsilon}+\varepsilon)^{\varsigma}|^2\leq C(T)~~\mbox{for all}~~ t\in(0,T).
$$
\end{lemma}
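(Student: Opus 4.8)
The plan is to obtain all three estimates by duality. Throughout I would use that, since $\Omega\subset\mathbb{R}^2$, one has the continuous embeddings $W^{2,2}(\Omega)\hookrightarrow W^{1,q}(\Omega)$ for every $q<\infty$ and $W^{2,2}(\Omega)\hookrightarrow L^\infty(\Omega)$, together with the uniform-in-$(\varepsilon,t)$ bounds for $n_\varepsilon$, $c_\varepsilon$, $\nabla c_\varepsilon$ and $u_\varepsilon$ collected in Lemma \ref{lemma45630hhuujjuu} and the pointwise bound $|S_\varepsilon|\le C_S$. For the first estimate I would fix $\varphi\in W^{2,2}_0(\Omega)$, multiply the first equation of \dref{1.1fghyuisda} by $\varphi$ and integrate by parts; since $\varphi$ and $\partial_\nu\varphi$ vanish on $\partial\Omega$, $\nabla\cdot u_\varepsilon=0$ and the no-flux conditions hold, this leaves $\int_\Omega(n_\varepsilon+\varepsilon)^m\Delta\varphi+\int_\Omega n_\varepsilon S_\varepsilon\nabla c_\varepsilon\cdot\nabla\varphi+\int_\Omega n_\varepsilon u_\varepsilon\cdot\nabla\varphi$, which I would bound by $\|(n_\varepsilon+\varepsilon)^m\|_{L^2(\Omega)}\|\Delta\varphi\|_{L^2(\Omega)}$, by $C_S\|n_\varepsilon\|_{L^\infty(\Omega)}\|\nabla c_\varepsilon\|_{L^\infty(\Omega)}\|\nabla\varphi\|_{L^1(\Omega)}$, and by $\|n_\varepsilon\|_{L^\infty(\Omega)}\|u_\varepsilon\|_{L^\infty(\Omega)}\|\nabla\varphi\|_{L^1(\Omega)}$, respectively. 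All three prefactors being uniformly bounded, this gives $\|\partial_t n_\varepsilon(\cdot,t)\|_{(W^{2,2}_0(\Omega))^*}\le C$ for all $t>0$ and $\varepsilon\in(0,1)$.

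For the space-time gradient bound I would write $|\nabla(n_\varepsilon+\varepsilon)^\varsigma|^2=\varsigma^2(n_\varepsilon+\varepsilon)^{2\varsigma-2}|\nabla n_\varepsilon|^2$ and take $p:=2\varsigma-m+1$, so that $2\varsigma-2=m+p-3$; the hypothesis $\varsigma>\max\{m,2(m-1)\}$ forces $p>\max\{1,m-1\}$ (one has $p>1\Leftrightarrow\varsigma>m/2$ and $p>m-1\Leftrightarrow\varsigma>m-1$), i.e. $p$ is an admissible exponent in Lemma \ref{lemma456ddd3025xxhjjjjjkloghyui}. Integrating over $(0,T)$ the differential inequality $\tfrac1p\tfrac{d}{dt}\|n_\varepsilon+\varepsilon\|_{L^p(\Omega)}^p+\tfrac{m(p-1)}{4}\int_\Omega(n_\varepsilon+\varepsilon)^{m+p-3}|\nabla n_\varepsilon|^2\le C$ established in the proof of that lemma, and discarding the nonnegative term $\tfrac1p\|n_\varepsilon(\cdot,T)+\varepsilon\|_{L^p(\Omega)}^p$ while controlling the initial value $\|n_0+\varepsilon\|_{L^p(\Omega)}^p$ by the regularity of $n_0$, I would obtain $\int_0^T\int_\Omega(n_\varepsilon+\varepsilon)^{m+p-3}|\nabla n_\varepsilon|^2\le C(T)$, which is exactly the asserted bound. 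Running the same argument with $p=\varsigma$ (again admissible, since $\varsigma>m>\max\{1,m-1\}$) additionally yields $\int_0^T\int_\Omega(n_\varepsilon+\varepsilon)^{m+\varsigma-3}|\nabla n_\varepsilon|^2\le C(T)$, which is needed below.

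For the bound on $\partial_t(n_\varepsilon+\varepsilon)^\varsigma$ I would use $\partial_t(n_\varepsilon+\varepsilon)^\varsigma=\varsigma(n_\varepsilon+\varepsilon)^{\varsigma-1}\partial_t n_\varepsilon$, insert the first equation of \dref{1.1fghyuisda}, test against $\varphi\in W^{2,2}_0(\Omega)$ with $\|\varphi\|_{W^{2,2}(\Omega)}\le1$, and integrate by parts. The transport part collapses to $\int_\Omega(n_\varepsilon+\varepsilon)^\varsigma u_\varepsilon\cdot\nabla\varphi$ (using $\nabla\cdot u_\varepsilon=0$), bounded by $\|u_\varepsilon\|_{L^\infty(\Omega)}\|(n_\varepsilon+\varepsilon)^\varsigma\|_{L^2(\Omega)}$; the chemotaxis part splits into a $\nabla\varphi$-piece bounded by $C_S\|\nabla c_\varepsilon\|_{L^\infty(\Omega)}\|(n_\varepsilon+\varepsilon)^\varsigma\|_{L^2(\Omega)}$ and a $\varphi$-piece of the form $\int_\Omega(n_\varepsilon+\varepsilon)^{\varsigma-1}|\nabla n_\varepsilon|\,|S_\varepsilon|\,|\nabla c_\varepsilon|\,|\varphi|$, which Cauchy--Schwarz together with $\|\varphi\|_{L^\infty(\Omega)}\le C$ reduces to $C(\int_\Omega(n_\varepsilon+\varepsilon)^{m+\varsigma-3}|\nabla n_\varepsilon|^2)^{1/2}$ once the $L^\infty$-bound on $n_\varepsilon$ is used to absorb the leftover power $(n_\varepsilon+\varepsilon)^{\varsigma-m+1}$ (whose exponent is positive); finally the porous-medium part produces $-\varsigma m\int_\Omega(n_\varepsilon+\varepsilon)^{\varsigma+m-2}\nabla\varphi\cdot\nabla n_\varepsilon$ and $-\varsigma(\varsigma-1)m\int_\Omega(n_\varepsilon+\varepsilon)^{\varsigma+m-3}|\nabla n_\varepsilon|^2\varphi$, both dominated by $C(1+\int_\Omega(n_\varepsilon+\varepsilon)^{m+\varsigma-3}|\nabla n_\varepsilon|^2)$ in the same manner. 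Taking the supremum over $\varphi$, integrating over $(0,T)$, and combining Cauchy--Schwarz in time with the uniform bounds of Lemma \ref{lemma45630hhuujjuu} and the bound $\int_0^T\int_\Omega(n_\varepsilon+\varepsilon)^{m+\varsigma-3}|\nabla n_\varepsilon|^2\le C(T)$ from the previous step, I would arrive at $\int_0^T\|\partial_t(n_\varepsilon+\varepsilon)^\varsigma(\cdot,t)\|_{(W^{2,2}_0(\Omega))^*}\,dt\le C(T)$.

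The step I expect to be delicate is the $\varepsilon$-uniformity in the degenerate-diffusion terms of the last part: after integration by parts one is left with several powers $(n_\varepsilon+\varepsilon)^\alpha$ multiplying $|\nabla n_\varepsilon|$ or $|\nabla n_\varepsilon|^2$, and each time the $L^\infty$-bound on $n_\varepsilon$ is invoked to reduce to the exponent $m+\varsigma-3$ for which Lemma \ref{lemma456ddd3025xxhjjjjjkloghyui} supplies a space-time estimate, one must check that the residual exponent is nonnegative, so that no negative power of $n_\varepsilon+\varepsilon$ --- which would be controllable only by an $\varepsilon$-dependent constant --- ever appears. The assumption $\varsigma>\max\{m,2(m-1)\}$ is exactly what keeps all these residual exponents nonnegative and makes both $p=2\varsigma-m+1$ and $p=\varsigma$ admissible in Lemma \ref{lemma456ddd3025xxhjjjjjkloghyui}; the remainder of the argument is routine bookkeeping, essentially as in Lemmas 3.22--3.23 of \cite{Winkler11215}.
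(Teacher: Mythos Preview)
Your proposal is correct and follows essentially the same approach as the paper: the paper does not give a detailed proof but simply invokes the arguments of Lemmas~3.22--3.23 in \cite{Winkler11215} together with the uniform bounds of Lemma~\ref{lemma45630hhuujjuu}, and what you have written is precisely a careful expansion of that reference, including the identification of the admissible exponents $p=2\varsigma-m+1$ and $p=\varsigma$ in Lemma~\ref{lemma456ddd3025xxhjjjjjkloghyui} and the check that all residual powers of $n_\varepsilon+\varepsilon$ stay nonnegative under the hypothesis $\varsigma>\max\{m,2(m-1)\}$.
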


Finally, we can prove the main result.

{\bf Proof of Theorem \ref{theorem3}}.
In conjunction with Lemma \ref{lemma45630hhuujjuu} and the Aubin-Lions compactness lemma (see e.g. Simon \cite{Simon}),
we thus infer the existence of a sequence of numbers $\varepsilon = \varepsilon_j \searrow 0$ along which
\begin{equation}
 n_\varepsilon\rightharpoonup n ~~\mbox{weakly star in}~~ L^\infty(\Omega\times(0,\infty)),
 \label{zjscz2.5297x9630222222ee}
\end{equation}
\begin{equation}
n_\varepsilon\rightarrow n ~~\mbox{in}~~ C^0_{loc}([0,\infty); (W^{2,2}_0 (\Omega))^*),
\label{zjscz2.5297x96302222tt}
\end{equation}
\begin{equation}
c_\varepsilon\rightarrow c ~~\mbox{in}~~ C^0_{loc}(\bar{\Omega}\times[0,\infty)),
 \label{zjscz2.5297x96302222tt3}
\end{equation}
\begin{equation}
\nabla c_\varepsilon\rightarrow \nabla c ~~\mbox{in}~~ C^0_{loc}(\bar{\Omega}\times(0,\infty)),
 \label{zjscz2.5297x96302222tt4}
\end{equation}
\begin{equation}
\nabla c_\varepsilon\rightharpoonup \nabla c ~~\mbox{weakly star in}~~ L^{\infty}(\Omega\times(0,\infty))
 \label{zjscz2.5297x96302222tt4}
\end{equation}
as well as
\begin{equation}
u_\varepsilon\rightarrow u ~~\mbox{in}~~ C^0_{loc}(\bar{\Omega}\times[0,\infty))
\label{zjscz2.5297x96302222tt44}
\end{equation}
and
\begin{equation}
D u_\varepsilon\rightharpoonup Du ~~\mbox{weakly star in}~~L^{\infty}(\Omega\times(0,\infty))
 \label{zjscz2.5297x96302222tt4455}
\end{equation}
holds for some limit $(n,c,u) \in (L^\infty(\Omega\times  (0,\infty)))^4$ with nonnegative $n $ and $c$.
On the other hand, Lemma \ref{lemma45630hhuujjuuyytt} implies that for each $T > 0,$ $(n_{\varepsilon}^\varsigma)_{\varepsilon\in(0,1)}$ is bounded in $L^2((0, T);W^{1,2}(\Omega))$,
so that, using Aubin-Lions lemma again,
one may obtain $n_{\varepsilon}^\varsigma\rightarrow z^\varsigma$ for some nonnegative measurable $z:\Omega\times(0,\Omega)\rightarrow\mathbb{R}$.
Thus, \dref{zjscz2.5297x9630222222ee} and the Egorov theorem yields to  $z=n$ necessarily, and thereby
\begin{equation}
n_\varepsilon\rightarrow n ~~\mbox{a.e.}~~ \mbox{in}~~ \Omega\times (0,\infty)
\label{zjscz2.5297x9630222222}
\end{equation}
holds.

Due to these convergence properties (see \dref{zjscz2.5297x9630222222ee}--\dref{zjscz2.5297x9630222222}),
applying standard arguments we may take $\varepsilon = \varepsilon_j\searrow0$ in each term of the natural weak formulation of \dref{1.1fghyuisda} separately
to verify that in fact $(n,c,u)$ can be complemented by some pressure function $P$ in such a way that $(n,c,u,P)$ is a weak solution of (\ref{0.4})-(\ref{0.5}).
In the end, we can infer from the boundedness of $(n_{\varepsilon},c_{\varepsilon},u_{\varepsilon})$ and the Banach-Alaoglu theorem that $(n,c,u)$ is bounded.
\hfill$\Box$

{\bf Acknowledgement}:
This work is partially supported by  the Shandong Provincial
Science Foundation for Outstanding Youth (No. ZR2018JL005) and the National Natural
Science Foundation of China (No. 11601215). 

\end{document}